\providecommand{\U}[1]{\protect\rule{.1in}{.1in}}
\newtheorem{theorem}{Theorem}[section]
\newtheorem{conjecture}[theorem]{Conjecture}
\newtheorem{corollary}[theorem]{Corollary}
\newtheorem{lemma}[theorem]{Lemma}
\newtheorem{problem}[theorem]{Problem}
\newtheorem{proposition}[theorem]{Proposition}
\newenvironment{proof}[1][Proof]{\noindent\textbf{#1.} }{\ \rule{0.5em}{0.5em}}
\begin{document}

\author{Vadim E. Levit\\Department of Computer Science\\Ariel University, Israel\\levitv@ariel.ac.il
\and Eugen Mandrescu\\Department of Computer Science\\Holon Institute of Technology, Israel\\eugen\_m@hit.ac.il}
\title{$1$-well-covered graphs revisited}
\date{}
\maketitle

\begin{abstract}
A graph is \textit{well-covered} if all its maximal independent sets are of
the same size (M. D. Plummer, 1970). A well-covered graph is $1$%
-\textit{well-covered} if the deletion of every vertex leaves a graph which is
well-covered as well (J. W. Staples, 1975).

A graph $G$ belongs to class $W_{n}$ if every $n$ pairwise disjoint
independent sets in $G$ are included in $n$ pairwise disjoint maximum
independent sets (J. W. Staples, 1975). Clearly, $W_{1}$ is the family of all
well-covered graphs. It turns out that $G\in\mathbf{W}_{\mathbf{2}}$ if and
only if it is a $1$-well-covered graph without isolated vertices.

We show that deleting a \textit{shedding vertex} does not change the maximum
size of a maximal independent set including a given $A\in Ind\left(  G\right)
$ in a graph $G$, where $\mathrm{Ind}(G)$ is the family of all the independent
sets. Specifically, for well-covered graphs it means that the vertex $v$ is
shedding if and only if $G-v$ is well-covered. In addition, we provide new
characterizations of $1$-well-covered graphs, which we further use in building
$1$-well-covered graphs by corona, join, and concatenation operations.

\textbf{Keywords}:\ independent set, well-covered graph, $1$-well-covered
graph, class $W_{2}$, shedding vertex, differential of a set, corona of
graphs, graph join, graph concatenation.

\end{abstract}

\section{Introduction}

Throughout this paper $G=(V,E)$ is a simple (i.e., a finite, undirected,
loopless and without multiple edges) graph with vertex set $V=V(G)\neq
\emptyset$ and edge set $E=E(G)$. If $X\subset V$, then $G[X]$ is the graph of
$G$ induced by $X$. By $G-U$ we mean the subgraph $G[V-U]$, if $U\subset
V(G)$. We also denote by $G-F$ the subgraph of $G$ obtained by deleting the
edges of $F$, for $F\subset E(G)$, and we write shortly $G-e$, whenever $F$
$=\{e\}$.

The \textit{neighborhood} $N(v)$ of $v\in V\left(  G\right)  $ is the set
$\{w:w\in V\left(  G\right)  $ \textit{and} $vw\in E\left(  G\right)  \}$,
while the \textit{closed neighborhood} $N[v]$\ of $v$ is the set
$N(v)\cup\{v\}$. Let $\deg\left(  v\right)  =\left\vert N(v)\right\vert $ and
$\Delta\left(  G\right)  =\max\left\{  \deg\left(  v\right)  :v\in V\left(
G\right)  \right\}  $. If $\deg\left(  v\right)  =1$, then $v$ is a
\textit{leaf}. For an edge $ab\in E(G)$, let $G_{ab}=G\left[  V\left(
G\right)  -(N(a)\cup N(b))\right]  $.

The \textit{neighborhood} $N(A)$ of $A\subseteq V\left(  G\right)  $ is
$\{v\in V\left(  G\right)  :N(v)\cap A\neq\emptyset\}$, and $N[A]=N(A)\cup A$.
We may also use $N_{G}(v),N_{G}\left[  v\right]  ,N_{G}(A)$ and $N_{G}\left[
A\right]  $, when referring to neighborhoods in a graph $G$.

$C_{n},K_{n},P_{n},K_{p,q}$ denote respectively, the cycle on $n\geq3$
vertices, the complete graph on $n\geq1$ vertices, the path on $n\geq1$
vertices, and the complete bipartite graph on $p+q$ vertices, where $p,q\geq1$.

The \textit{disjoint union} of the graphs $G_{i},1\leq i\leq p$ is the graph
$G_{1}\cup G_{2}\cup\cdots\cup G_{p}$ having the disjoint unions $V(G_{1})\cup
V(G_{2})\cup\cdots\cup V\left(  G_{p}\right)  $ and $E(G_{1})\cup E(G_{2}%
)\cup\cdots\cup E\left(  G_{p}\right)  $ as a vertex set and an edge set,
respectively. In particular, $pG$ denotes the disjoint union of $p>1$ copies
of the graph $G$.

A \textit{matching} is a set $M$ of pairwise non-incident edges of $G$. If
$A,B\subset V(G)$ and every vertex of $A$ is matched by $M$ with some vertex
of $B$, then we say that $A$ is matched into $B$. A matching of maximum
cardinality, denoted $\mu(G)$, is a \textit{maximum matching}.

A set $S\subseteq V(G)$ is \textit{independent} if no two vertices from $S$
are adjacent, and by $\mathrm{Ind}(G)$ we mean the family of all the
independent sets of $G$. An independent set of maximum size is a
\textit{maximum independent set} of $G$, and $\alpha(G)=\max\{\left\vert
S\right\vert :S\in\mathrm{Ind}(G)\}$. Let $\Omega(G)$ denote the family of all
maximum independent sets.

\begin{theorem}
\cite{Berge1982}\label{th6} In a graph $G$, an independent set $S$ is maximum
if and only if every independent set disjoint from $S$ can be matched into $S$.
\end{theorem}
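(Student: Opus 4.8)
The plan is to derive Theorem~\ref{th6} from Hall's marriage theorem applied to a suitable bipartite graph, treating the two implications separately.

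For the ``if'' direction, assume every independent set disjoint from $S$ can be matched into $S$, and let $T\in\mathrm{Ind}(G)$ be arbitrary. I would apply the hypothesis to $T'=T\setminus S$, which is independent and disjoint from $S$: there is a matching pairing each $t\in T'$ with some neighbor $f(t)\in S$, and $f$ is injective on $T'$. Since $T$ is independent and $t\in T$, no neighbor of $t$ lies in $T$, so in fact $f$ maps $T'$ into $S\setminus T$; hence $|T\setminus S|\le|S\setminus T|$. Adding $|S\cap T|=|T\cap S|$ to both sides yields $|T|\le|S|$, and as $T$ was arbitrary, $S$ is maximum.

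For the ``only if'' direction, assume $S$ is maximum and let $T\in\mathrm{Ind}(G)$ be disjoint from $S$. Form the bipartite graph $H$ with parts $S$ and $T$ whose edges are exactly those edges of $G$ joining a vertex of $S$ to a vertex of $T$; a matching of $T$ into $S$ is precisely a matching of $H$ saturating $T$, so by Hall's theorem it suffices to verify $|N_{H}(A)|\ge|A|$ for every $A\subseteq T$. Suppose, toward a contradiction, that $|N_{H}(A)|<|A|$ for some $A\subseteq T$, and set $S^{*}=(S\setminus N_{H}(A))\cup A$. I would check $S^{*}\in\mathrm{Ind}(G)$: the sets $S\setminus N_{H}(A)$ and $A$ are each independent, $A$ is disjoint from $S$, and any vertex of $S$ adjacent in $G$ to some vertex of $A$ lies in $N_{H}(A)$, hence is removed. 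Then $|S^{*}|=|S|-|N_{H}(A)|+|A|>|S|$, contradicting the maximality of $S$; so Hall's condition holds and the matching exists.

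The independence and cardinality bookkeeping is routine; the one genuinely substantive step is the observation in the last paragraph that a violation of Hall's condition is not a mere combinatorial obstruction but actually produces the strictly larger independent set $S^{*}$. That is the crux that forces the equivalence, while Hall's theorem itself I will invoke as a black box.
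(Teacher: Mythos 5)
Your proposal is correct. Note, however, that the paper does not prove this statement at all: Theorem~\ref{th6} is quoted as a known result of Berge with only a citation, so there is no in-paper argument to compare against. Your two directions are both sound and self-contained. In the ``if'' direction the reduction to $T'=T\setminus S$ and the observation that the matched partners lie in $S\setminus T$ (because $T$ is independent) give $|T\setminus S|\le|S\setminus T|$ and hence $|T|\le|S|$, which is exactly what is needed. In the ``only if'' direction the exchange set $S^{*}=(S\setminus N_{H}(A))\cup A$ is indeed independent, since every vertex of $S$ with a $G$-neighbour in $A$ lies in $N_{H}(A)$ and is removed, and $|S^{*}|=|S|-|N_{H}(A)|+|A|>|S|$ uses correctly that $N_{H}(A)\subseteq S$ and $A\cap S=\emptyset$; this is the standard deficiency-version argument, with Hall's theorem as the only external ingredient. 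The only cosmetic remark is that you could dispense with Hall entirely by running the same exchange argument against a maximum matching of $H$ (K\"onig--Hall style), but invoking Hall as a black box is perfectly legitimate and matches the level of generality of Berge's statement.
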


A graph $G$ is \textit{quasi-regularizable} if one can replace each edge of
$G$ with a non-negative integer number of parallel copies, so as to obtain a
regular multigraph of degree $\neq0$ \cite{Berge1982}. Equivalently, $G$ is
quasi-regularizable if and only if\textit{ }$\left\vert S\right\vert
\leq\left\vert N\left(  S\right)  \right\vert $ holds for every independent
set $S$ of $G$ \cite{Berge1982}. A graph $G$ is \textit{regularizable} if by
multiplying each edge by a positive integer, one gets a regular multigraph of
degree $\neq0$ \cite{Berge 1978}. For instance, every odd cycle $C_{2k+1}%
,k\geq2$, is regularizable.

\begin{theorem}
\cite{Berge 1978}\label{th11} \emph{(i)} Let $G$ be a connected graph that is
not a bipartite with partite sets of equal size. Then $G$ is regularizable if
and only if $\left\vert N(S)\right\vert >\left\vert S\right\vert $\ for every
non-empty independent set $S\subseteq V\left(  G\right)  $.

\emph{(ii)} A graph $G$ is regularizable if and only if $\left\vert
N(S)\right\vert \geq\left\vert S\right\vert $ for each independent set $S$,
and $\left\vert N(S)\right\vert =\left\vert S\right\vert \Rightarrow N\left(
N(S)\right)  =S$.
\end{theorem}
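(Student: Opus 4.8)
The plan is to recast regularizability in terms of fractional perfect matchings. Call $w\colon E(G)\to\mathbb{R}_{\geq 0}$ a \emph{fractional perfect matching} (FPM) if $\sum_{e\ni v}w(e)=1$ for every $v\in V(G)$. Three observations set up the argument. First, $G$ is regularizable if and only if $G$ admits an FPM that is strictly positive on every edge: dividing the multiplicities of a regular multigraph of degree $d$ by $d$ produces such an FPM, and clearing denominators in a strictly positive rational FPM produces a regularization. Second, $G$ admits an FPM at all if and only if $|S|\leq|N(S)|$ for every independent $S$ --- this is precisely the quasi-regularizability criterion recalled in the Introduction, since the support of an FPM is a spanning subgraph carrying a positive-multiplier regularization, and conversely. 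Third, as the set of FPMs is convex, averaging FPMs shows that $G$ is regularizable if and only if \emph{every edge of $G$ lies in the support of some FPM}.

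With this dictionary, I would obtain the forward implication of \emph{(ii)} by a double count. Let $m$ regularize $G$ to degree $d>0$ and let $S$ be independent. Since $S$ has no internal edge, $d|S|=\sum_{v\in S}\sum_{e\ni v}m(e)$ equals the total multiplicity on the edges joining $S$ to $N(S)$, which is at most $\sum_{u\in N(S)}\sum_{e\ni u}m(e)=d|N(S)|$; hence $|S|\leq|N(S)|$. If equality holds and $S\neq\emptyset$, then every edge meeting $N(S)$ must land in $S$, so $N(N(S))\subseteq S$; and no vertex is isolated (an isolated vertex would violate the inequality), so every vertex of $S$ has a neighbor in $N(S)$, giving $S\subseteq N(N(S))$ and hence $N(N(S))=S$.

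For the converse of \emph{(ii)} I would assume the inequality and the tightness condition. The inequality already yields an FPM of $G$, so it remains to show that every edge $e=ab$ lies in some FPM support. Because a vertex of the fractional matching polytope is half-integral with support a disjoint union of edges and odd cycles spanning $V(G)$, the edge $e$ lies in some FPM support if and only if either (A) $G-\{a,b\}$ has an FPM --- then set $w(e)=1$ and extend by the FPM of $G-\{a,b\}$ --- or (B) some odd cycle $C$ through $e$ has $G-V(C)$ admitting an FPM --- then put weight $1/2$ on $C$, extend by the FPM of $G-V(C)$, and set $0$ elsewhere. Suppose (A) fails. Applying the quasi-regularizability criterion to $G-\{a,b\}$ yields an independent $S\subseteq V(G)\setminus\{a,b\}$ with $|N_{G-\{a,b\}}(S)|<|S|$, i.e. $|N_G(S)\setminus\{a,b\}|<|S|\leq|N_G(S)|$, which forces $|N_G(S)|\in\{|S|,|S|+1\}$. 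The value $|N_G(S)|=|S|$ is impossible: the hypothesis would give $N_G(N_G(S))=S$, but then $a\in N_G(S)$ would force $b\in N_G(a)\subseteq S$, a contradiction, so $a,b\notin N_G(S)$ and $N_{G-\{a,b\}}(S)=N_G(S)$ has size $|S|$, not less. Hence $|N_G(S)|=|S|+1$ and $\{a,b\}\subseteq N_G(S)$, so $a$ and $b$ each have a neighbor in $S$ while $S$ has deficiency exactly one in $G-\{a,b\}$. From this configuration I would close a path through $S\cup N_G(S)$ into an odd cycle $C$ containing $ab$, delete $C$, and finish by induction on $|V(G)|$, the residual graph inheriting the hypotheses. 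This last step --- arranging the odd cycle of case (B), forced upon us because an edge may be ``used'' only inside an odd cycle (as already happens in $C_5$) rather than as a matching edge, so one cannot simply delete $a$ and $b$ --- is where I expect the real difficulty, and it is essentially the content of Berge's structural analysis of regularizable graphs.

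Finally, I would deduce \emph{(i)} from \emph{(ii)}. If $|N(S)|>|S|$ for every non-empty independent $S$, the tightness implication in \emph{(ii)} is vacuous, so $G$ is regularizable. Conversely, let $G$ be connected and regularizable and suppose some non-empty independent $S$ has $|N(S)|=|S|$; by \emph{(ii)}, $N(N(S))=S$. Put $T=N(S)$, which is disjoint from $S$ and satisfies $|T|=|S|$. Every edge incident to $S$ joins $S$ to $T$ (as $S$ is independent), and every edge incident to $T$ joins $T$ to $N(T)=N(N(S))=S$; hence no edge leaves $S\cup T$, and connectivity forces $V(G)=S\cup T$. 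Then every edge of $G$ joins $S$ to $T$, so $G$ is bipartite with partite sets $S$ and $T$ of equal size, contradicting the hypothesis on $G$. Therefore no such $S$ exists, i.e. $|N(S)|>|S|$ for every non-empty independent $S$.
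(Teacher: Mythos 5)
The paper offers no proof of this theorem at all: it is imported verbatim from Berge's \emph{Regularizable graphs I} (1978), so your attempt can only be judged on its own terms. Much of your framework is sound: the dictionary between regularizability and strictly positive fractional perfect matchings, the convexity/averaging reduction of regularizability to ``every edge lies in the support of some FPM,'' the identification of FPM-existence with the quasi-regularizability criterion, the double count proving the necessity direction of \emph{(ii)} (including the tightness analysis giving $N(N(S))=S$), and the deduction of \emph{(i)} from \emph{(ii)} are all correct.

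The genuine gap is the sufficiency direction of \emph{(ii)}, which is the substantive half of Berge's theorem. You correctly reduce, via half-integrality of the vertices of the FPM polytope, to the dichotomy: either $G-\{a,b\}$ has an FPM, or one needs an odd cycle $C$ through $ab$ with $G-V(C)$ having an FPM; and you correctly pin down the obstruction configuration when the first alternative fails ($|N_G(S)|=|S|+1$ with $a,b\in N_G(S)$). But at that point the proof stops: ``close a path through $S\cup N_G(S)$ into an odd cycle $C$ containing $ab$, delete $C$, and finish by induction'' is not carried out, and you concede it is the real difficulty. Two things are missing. First, the existence of such an odd cycle is only plausible (an alternating-path argument in the bipartite graph between $S$ and $N_G(S)$, where $S$ has deficiency one, would have to be written down). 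Second, and more seriously, the claim that ``the residual graph inherits the hypotheses'' is not automatic: for an independent set $T$ of $G-V(C)$ one has $N_{G-V(C)}(T)=N_G(T)\setminus V(C)$, so neighborhoods shrink and the Hall-type inequality can fail after deletion; controlling this is precisely where the choice of $C$ (or a different global mechanism, such as LP duality with complementary slackness or the Gallai--Edmonds structure, which is how Berge actually argues) must enter. As written, the converse of \emph{(ii)} is a sketch whose key step is outsourced to ``Berge's structural analysis,'' i.e., to the very statement being proved, and \emph{(i)} inherits this gap since its backward direction rests on \emph{(ii)}.
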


A graph is \textit{well-covered} if all its maximal independent sets are of
the same cardinality \cite{plum}. In other words, a graph is well-covered if
every independent set is included in a maximum independent set. It is known
that every well-covered graph is quasi-regularizable \cite{Berge1982}. If $G$
is well-covered, without isolated vertices, and $\left\vert V\left(  G\right)
\right\vert =2\alpha\left(  G\right)  $, then $G$ is a \textit{very
well-covered graph} \cite{Favaron1982}. The only well-covered cycles are
$C_{3}$, $C_{4}$, $C_{5}$ and $C_{7}$, while $C_{4}$ is the unique very
well-covered cycle.

A well-covered graph (with at least two vertices) is $1$-\textit{well-covered}
if the deletion of every vertex of the graph leaves a graph, which is
well-covered as well \cite{StaplesThesis}. For instance, $K_{2}$ is
$1$-well-covered, while $P_{4}$ is very well-covered, but not $1$-well-covered.

Let $n$ be a positive integer. A graph $G$ belongs to class $W_{n}$ if every
$n$ pairwise disjoint independent sets in $G$ are included in $n$ pairwise
disjoint maximum independent sets \cite{StaplesThesis}. First, if $V\left(
G\right)  =\emptyset$, then $G\in W_{n}$. Second, $K_{n}\in W_{n}$, for every
$n$. Third, $W_{1}\supseteq W_{2}\supseteq W_{3}\supseteq\cdots$, where
$W_{1}$ is the family of all well-covered graphs.\ A number of ways to build
graphs belonging to class $W_{n}$ are presented in \cite{StaplesThesis}.

\begin{theorem}
\cite{Staples}\label{th444} $G\in\mathbf{W}_{2}$ if and only if $\alpha
(G-v)=\alpha(G)$ and $G-v$ is well-covered, for every $v\in V(G)$.
\end{theorem}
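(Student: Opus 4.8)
The plan is to prove the two implications directly from the definitions of $W_{2}$ and of well-coveredness; Berge's criterion (Theorem~\ref{th6}) will not be needed.

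For ``$\Rightarrow$'', assume $G\in W_{2}$, so that $G$ is well-covered (since $W_{2}\subseteq W_{1}$), and fix $v\in V(G)$. Applying $W_{2}$ to the disjoint independent sets $\emptyset$ and $\{v\}$ produces disjoint $S_{0},S_{v}\in\Omega(G)$ with $v\in S_{v}$; then $v\notin S_{0}$, so $S_{0}\subseteq V(G-v)$ witnesses $\alpha(G-v)=\alpha(G)$. For well-coveredness of $G-v$ I would take an arbitrary maximal independent set $A$ of $G-v$ and show $|A|=\alpha(G)$. If $v$ has a neighbour in $A$, then $A$ is maximal in $G$ as well, so $|A|=\alpha(G)$. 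If $v$ has no neighbour in $A$, then $A\cup\{v\}$ is maximal in $G$, so $|A|=\alpha(G)-1$; applying $W_{2}$ to the disjoint independent sets $A$ and $\{v\}$ gives disjoint $S_{A},S_{v}\in\Omega(G)$ with $A\subseteq S_{A}$, $v\in S_{v}$, and writing $S_{A}=A\cup\{x\}$ (a size count) forces a contradiction either way: $x=v$ violates $S_{A}\cap S_{v}=\emptyset$, while $x\neq v$ makes $A\cup\{x\}$ an independent set of $G-v$ properly containing $A$.

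For ``$\Leftarrow$'', assume $\alpha(G-v)=\alpha(G)$ and $G-v$ well-covered for all $v$ (the case $V(G)=\emptyset$ being trivial). First I would record that $G$ has no isolated vertex (an isolated $v$ would force $\alpha(G-v)=\alpha(G)-1$), so $G$ has an edge, and that $G$ is itself well-covered: a maximal independent set $A\neq V(G)$ of $G$ stays maximal in $G-v$ for any $v\in V(G)\setminus A$, so $|A|=\alpha(G-v)=\alpha(G)$, while $A=V(G)$ cannot occur. The crucial tool is the lemma: \emph{for every independent set $I$ and every $v\notin I$ there is $S\in\Omega(G)$ with $I\subseteq S$ and $v\notin S$}; to prove it, extend $I$ to a maximal independent set $S$ of $G-v$ and note that $|S|=\alpha(G-v)=\alpha(G)$ because $G-v$ is well-covered, so $S$ is a maximum independent set of $G$ avoiding $v$.

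Next I would bootstrap this to: \emph{for any two disjoint independent sets $A,B$ there is $S_{A}\in\Omega(G)$ with $A\subseteq S_{A}$ and $S_{A}\cap B=\emptyset$}. Choose $S\in\Omega(G)$ with $A\subseteq S$ and $|S\cap B|$ minimum; if $|S\cap B|\geq1$, pick $b\in S\cap B$ and apply the lemma with $I=S\setminus\{b\}$ and $b$ in the role of $v$, obtaining $S'\in\Omega(G)$ with $S\setminus\{b\}\subseteq S'$ and $b\notin S'$. A size count gives $S'=(S\setminus\{b\})\cup\{x\}$ with $x\notin S$; since $S$ is maximal, $x$ has a neighbour in $S$, and as $x$ is non-adjacent to $S\setminus\{b\}$ that neighbour must be $b$, whence $x\notin B$ because $B$ is independent and $b\in B$. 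Then $A\subseteq S'$ and $|S'\cap B|=|S\cap B|-1$, a contradiction; so $|S\cap B|=0$. Finally, given disjoint independent $A,B$, apply this claim once to obtain $S_{A}$, then apply it again to the disjoint independent pair $(B,S_{A})$ to obtain $S_{B}\in\Omega(G)$ with $B\subseteq S_{B}$ and $S_{B}\cap S_{A}=\emptyset$; then $S_{A},S_{B}$ witness $G\in W_{2}$. I expect the only genuine difficulty to be in ``$\Leftarrow$'': since the hypothesis controls only single-vertex deletions, one must resist trying to delete all of $B$ at once, and instead prove the one-vertex avoidance lemma first and iterate it through the extremal argument above, the decisive point being that the vertex $x$ entering $S'$ is adjacent to the vertex $b$ leaving it, hence lies outside $B$ whenever $b$ lies inside it.
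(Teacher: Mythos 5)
Your proof is correct; I checked both directions in detail. In the forward direction, applying the $W_{2}$ property to the pairs $(\emptyset,\{v\})$ and $(A,\{v\})$ works, and your case analysis on whether $v$ has a neighbour in a maximal independent set $A$ of $G-v$ is sound. In the backward direction, the one-vertex avoidance lemma (extend $I$ to a maximal independent set of $G-v$ and use $\alpha(G-v)=\alpha(G)$ together with well-coveredness of $G-v$) and the exchange argument minimizing $\left\vert S\cap B\right\vert$ are both valid; the delicate point, that the unique vertex $x$ entering $S'$ must be adjacent to the departing vertex $b$ and hence lies outside $B$, is argued correctly from maximality of $S$. Note, however, that the paper does not prove Theorem~\ref{th444} at all---it is quoted from \cite{Staples}---so there is no in-paper proof to compare against; the closest material is Theorem~\ref{th1}, whose equivalence (ii)~$\Leftrightarrow$~(iii) is itself cited from \cite{StaplesThesis} rather than proved. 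Your argument is thus more self-contained than the paper: your avoidance lemma is essentially property (vii) of Theorem~\ref{th1} derived directly from the vertex-deletion hypothesis, and your bootstrapping claim amounts to a direct proof of the implication (vii)~$\Rightarrow$~(iii), which the paper obtains only by routing through the cited Staples equivalence; what the citation route buys is brevity, what yours buys is a complete, elementary proof using nothing beyond the definitions. One small caveat: you apply the definition of $W_{2}$ to a pair containing the empty set; this matches the paper's own usage (in the proof of Theorem~\ref{th1} (iii)~$\Rightarrow$~(vi) the set $A$ may be empty), and the theorem would actually fail for $K_{1}$ under a ``nonempty sets only'' reading, so your convention is the right one, but it deserves an explicit sentence.
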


A classification of triangle-free planar graphs in $\mathbf{W}_{\mathbf{2}}$
appears in \cite{Pinter1995}.

\begin{theorem}
\cite{Hoang2016a} Let $G$ be a triangle-free graph without isolated vertices.
Then $G$ is in $\mathbf{W}_{\mathbf{2}}$ if and only if $G_{ab}$ is
well-covered with $\alpha(G_{ab})=\alpha(G)-1$ for all edges $ab$.
\end{theorem}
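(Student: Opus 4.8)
The plan is to establish the two implications separately: ``$G\in\mathbf{W}_{\mathbf{2}}\Rightarrow$ the edge condition'' directly from the definition of $W_{2}$, and the converse through Staples' characterization (Theorem~\ref{th444}).

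\emph{Necessity.} Fix an edge $xy$. Every $S\in\mathrm{Ind}(G_{xy})$ lies in $V(G)\setminus(N(x)\cup N(y))$, so $S\cup\{x\}\in\mathrm{Ind}(G)$ (note $x\notin S$ since $x\in N(y)$), whence $|S|\le\alpha(G)-1$ and $\alpha(G_{xy})\le\alpha(G)-1$. Now let $A\in\mathrm{Ind}(G_{xy})$ be arbitrary. The sets $A\cup\{x\}$ and $N(y)\setminus\{x\}$ are independent in $G$ (the second because $N(y)$ is independent, $G$ being triangle-free) and disjoint (as $A\subseteq V(G_{xy})$ is disjoint from $N(y)$, while $x\notin N(y)\setminus\{x\}$). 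Since $G\in\mathbf{W}_{\mathbf{2}}$, they extend to disjoint maximum independent sets $P\supseteq A\cup\{x\}$ and $Q\supseteq N(y)\setminus\{x\}$. From $x\in P$ we get $P\cap N(x)=\emptyset$; from $P\cap Q=\emptyset$ we get $P\cap(N(y)\setminus\{x\})=\emptyset$; and $x\in N(y)\cap P$, so $P\cap N(y)=\{x\}$. Hence $P\setminus\{x\}\subseteq V(G)\setminus(N(x)\cup N(y))=V(G_{xy})$, i.e.\ $P\setminus\{x\}\in\mathrm{Ind}(G_{xy})$, $|P\setminus\{x\}|=\alpha(G)-1$, and $A\subseteq P\setminus\{x\}$. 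Taking $A=\emptyset$ gives $\alpha(G_{xy})\ge\alpha(G)-1$, so $\alpha(G_{xy})=\alpha(G)-1$; for general $A$ it shows every independent set of $G_{xy}$ is contained in a maximum one, i.e.\ $G_{xy}$ is well-covered.

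\emph{Sufficiency.} By Theorem~\ref{th444} it suffices to show, for every $v\in V(G)$, that $\alpha(G-v)=\alpha(G)$ and $G-v$ is well-covered. For the equality, pick $u\in N(v)$ (possible, as $G$ has no isolated vertices) and any $A\in\Omega(G_{uv})$; then $A\cup\{u\}$ is independent, avoids $v$, and has size $\alpha(G_{uv})+1=\alpha(G)$. For well-coveredness, note that a maximal independent set $M$ of $G-v$ is either already maximal in $G$ (when $v\in N(M)$) or yields the maximal independent set $M\cup\{v\}$ of $G$ (when $v\notin N[M]$); so it is enough to prove (i) $G$ itself is well-covered and (ii) no maximal independent set $M$ of $G-v$ has $v\notin N[M]$. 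For (i) and the easy case of (ii): if such an $M$ had size $<\alpha(G)$, choose an edge $ab$ with $a\in M$ adjacent to an obstructing vertex $b$; then $M\cap V(G_{ab})$ extends inside the well-covered graph $G_{ab}$ to some $\widehat M$ with $|\widehat M|=\alpha(G_{ab})=\alpha(G)-1$, and $\widehat M\cup\{a\}$ is a maximum independent set of $G$. When $a$ is the \emph{unique} neighbour of $b$ in $M$, one checks $\widehat M\cup\{a\}\supsetneq M$, contradicting the maximality of $M$.

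The remaining case --- every vertex adjacent to $M$ has at least two neighbours in $M$, so the substitution above no longer recovers $M$ --- is the step I expect to be the main obstacle. I would attack it globally: by a minimal counterexample or induction on $|V(G)|$, passing to a suitable $G_{ab}$ (again triangle-free, with $\alpha$ dropped by one) after checking it inherits a workable version of the hypotheses, or by pitting the forced inequality $e(M,\,V(G)\setminus M)\ge 2\,|V(G)\setminus M|$ against the quasi-regularizability of well-covered graphs and Theorem~\ref{th11}. Verifying that this dense configuration cannot coexist with ``$G_{ab}$ well-covered and $\alpha(G_{ab})=\alpha(G)-1$ for every edge $ab$'' is, I believe, the technical core of the converse; the necessity direction and the reductions above are routine. (Trying to verify the $W_{2}$ definition directly instead meets the same difficulty: producing two \emph{disjoint} near-maximum independent sets.)
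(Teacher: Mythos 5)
Your necessity argument is correct and essentially complete: for an edge $xy$ and $A\in\mathrm{Ind}(G_{xy})$, applying the $\mathbf{W}_{2}$ property to the disjoint independent sets $A\cup\{x\}$ and $N(y)\setminus\{x\}$ (the latter independent precisely because $G$ is triangle-free) and then deleting $x$ from the resulting maximum independent set $P$ does produce an independent set of $G_{xy}$ of size $\alpha(G)-1$ containing $A$, which together with the easy bound $\alpha(G_{xy})\le\alpha(G)-1$ gives both conclusions. Be aware, though, that the paper itself contains no proof of this statement --- it is quoted from \cite{Hoang2016a} --- so the entire burden rests on your own argument, and the converse is where the real content lies.

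That converse is where your proposal has a genuine gap, which you acknowledge. The reduction through Theorem~\ref{th444}, the verification of $\alpha(G-v)=\alpha(G)$ via a maximum independent set of $G_{uv}$ plus $u$, and the substitution trick (extend $M\cap V(G_{ab})$ inside the well-covered graph $G_{ab}$ and add $a$) are all sound, but the substitution only yields a contradiction when the ``obstructing'' vertex has a \emph{unique} neighbour $a$ in $M$. The remaining configuration --- every vertex outside $M$ (respectively, every neighbour of $v$) having at least two neighbours in $M$ --- is exactly the technical core, and you offer only candidate strategies rather than a proof. None of them is obviously viable as stated: the counting inequality forcing at least $2\left\vert V(G)\setminus M\right\vert$ edges between $M$ and its complement does not by itself clash with quasi-regularizability or Theorem~\ref{th11}, and a minimal-counterexample/induction passing to some $G_{ab}$ must cope with the fact that $G_{ab}$ need not inherit the hypotheses in usable form (for instance, already for $C_{5}\in\mathbf{W}_{2}$ the graph $G_{ab}$ is a single isolated vertex). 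In \cite{Hoang2016a} this direction is established through a chain of auxiliary lemmas; as written, your proposal proves only the forward implication.
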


A characterization of triangle-dominating\textit{ }graphs (i.e., graphs where
every triangle is also a dominating set) from $\mathbf{W}_{2}$ in terms of
forbidden configurations is presented in \cite{Hoang2016b}. $1$-well-covered
circulant graphs are considered in \cite{EVMVT2015}. Edge-stable equimatchable
graphs, which are, actually, $1$-well-covered line graphs, are investigated in
\cite{DE2016}.

By identifying the vertex $v_{i}$ with the variable $v_{i}$ in the polynomial
ring $R=K[v_{1},...,v_{n}]$ over a field $K$, one can associate with $G$ the
\textit{edge ideal} $I(G)=\left\{  v_{i}v_{j}:v_{i}v_{j}\in E(G)\right\}  $. A
graph $G$ is \textit{Cohen-Macaulay} (\textit{Gorenstein}) over $K$, if
$R/I(G)$ is a Cohen-Macaulay ring (a Gorenstein ring, respectively).

There are intriguing connections between graph theory and combinatorial
commutative algebra and graph theory \cite{Vi}. Consider, for instance, an
interplay between Cohen-Macaulay rings and graphs, were well-covered graphs
are known as unmixed graphs or may be reconstructed from pure simplicial
complexes \cite{Do,Woodroofe2011}. Even more fruitful interactions concern
shellability, vertex decomposability and well-coveredness
\cite{Biyi,CaCrRey2016,Estrada}. For example, every Cohen--Macaulay graph is
well-covered, while each Gorenstein graph without isolated vertices belongs to
$\mathbf{W}_{\mathbf{2}}$ \cite{Hoang2015}. Moreover, a triangle-free graph
$G$ is Gorenstein if and only if every non-trivial connected component of $G$
belongs to $\mathbf{W}_{\mathbf{2}}$ \cite{Hoang2016a}.

In this paper, we concentrate on structural properties of the class of
$1$-well-covered graphs, which is slightly larger than the class
$\mathbf{W}_{\mathbf{2}}$. Actually, we show that $G\in\mathbf{W}_{\mathbf{2}%
}$ if and only if it is a $1$-well-covered graph without isolated vertices. In
addition, a number of new characterizations for $1$-well-covered graphs is
provided. Some of them are based on the assumption that $G$\ is already
well-covered, while the others are formulated for general graphs. Further, we
describe shedding vertices as the ones that have no impact on the strength of
enlargement of independent sets. Specifically, for well-covered graphs it
means that the vertex $v$ is shedding if and only if $G-v$ is well-covered. We
also determine when corona, join, and concatenation of graphs are $1$-well-covered.

\section{Structural properties}

It is clear that $\alpha\left(  G-v\right)  \leq\alpha\left(  G\right)  $
holds for each $v\in V\left(  G\right)  $. If $u\in N\left(  v\right)  $ and
$G$ is well-covered, then there is some maximum independent set $S$ such that
$\left\{  u\right\}  \subset$ $S$. Hence $v\notin S$ and this implies
\[
\alpha\left(  G\right)  =\left\vert S\right\vert \leq\alpha\left(  G-v\right)
\leq\alpha\left(  G\right)  .
\]
In other words we get the following.

\begin{lemma}
\label{lem1}If $G$ is well-covered and $v\in V\left(  G\right)  $ is not
isolated, then $\alpha\left(  G-v\right)  =\alpha\left(  G\right)  $.
\end{lemma}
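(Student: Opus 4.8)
The plan is to lean directly on the defining reformulation of well-coveredness recalled in the text: every independent set of $G$ is contained in some maximum independent set. One direction is free and holds for any vertex: since $G-v$ is an induced subgraph of $G$, every independent set of $G-v$ is independent in $G$, so $\alpha(G-v)\le\alpha(G)$. The whole content is the reverse inequality, and the hypothesis that $v$ is not isolated is exactly what makes it work.

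For that direction I would fix a neighbor $u\in N(v)$, which exists precisely because $\deg(v)\ge 1$. The singleton $\{u\}$ lies in $\mathrm{Ind}(G)$, so well-coveredness supplies some $S\in\Omega(G)$ with $u\in S$. Because $uv\in E(G)$ and $S$ is independent, $v\notin S$; hence $S\subseteq V(G)-\{v\}=V(G-v)$ and $S$ is still independent there. This yields $\alpha(G-v)\ge|S|=\alpha(G)$, and combining with the first inequality gives the claimed equality. (This is exactly the computation already displayed just before the lemma, so the proof can also be phrased as a one-line reference to that chain of inequalities.)

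I do not expect any real obstacle here: the only point worth flagging is that "not isolated" is essential rather than cosmetic — if $v$ were isolated, every maximum independent set would contain $v$, and deleting it would force $\alpha(G-v)=\alpha(G)-1$. So the argument is really "pick a neighbor, extend its singleton to a maximum independent set, observe this set misses $v$," and nothing more delicate is needed; Theorem~\ref{th6} is not required, the elementary characterization of well-covered graphs suffices.
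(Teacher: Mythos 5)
Your proof is correct and coincides with the paper's own argument (displayed just before the lemma): pick $u\in N(v)$, extend $\{u\}$ to some $S\in\Omega(G)$ by well-coveredness, note $v\notin S$, and conclude $\alpha(G)=|S|\le\alpha(G-v)\le\alpha(G)$. Nothing further is needed.
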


The converse of Lemma \ref{lem1} is not generally true. For instance,
$\alpha\left(  P_{6}-v\right)  =\alpha\left(  P_{6}\right)  $ holds for each
$v\in V\left(  P_{6}\right)  $, but $P_{6}$ is not well-covered.

According to Theorem \ref{th444}, no graph in class $\mathbf{W}_{2}$ may have
isolated vertices, since all these vertices are included in each of its
maximum independent sets. However, a graph having isolated vertices may be
$1$-well-covered; e.g., $K_{3}\cup K_{1}$. The following theorem shows, among
other things, that a graph is $1$-well-covered if and only if each of its
connected components different from $K_{1}$ is in class $\mathbf{W}%
_{\mathbf{2}}$.

\begin{theorem}
\label{th1}For every graph $G$ having no isolated vertices, the following
assertions are equivalent:

\emph{(i) }$G\neq P_{3}$ and $G-v$ is well-covered, for every $v\in V(G)$;

\emph{(ii)} $G$ is $1$-well-covered;

\emph{(iii)} $G$ is in the class $\mathbf{W}_{2}$;

\emph{(iv)} for each non-maximum independent set $A$ in $G$ there are at least
two disjoint independent sets $B_{1},B_{2}$ such that $A\cup B_{1},A\cup
B_{2}\in\Omega\left(  G\right)  $;

\emph{(v)} for every non-maximum independent set $A$ in $G$ there are at least
two different independent sets $B_{1},B_{2}$ such that $A\cup B_{1},A\cup
B_{2}\in\Omega\left(  G\right)  $;

\emph{(vi)} for each pair of disjoint non-maximum independent sets $A,B$ in
$G$, there exists some $S\in\Omega(G)$ such that $A\subset S$ and $B\cap
S=\emptyset$;

\emph{(vii)} for every non-maximum independent set $A$ in $G$ and $v\notin A$,
there exists some $S\in\Omega(G)$ such that $A\subset S,v\notin S$.
\end{theorem}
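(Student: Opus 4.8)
The plan is to establish the equivalences in a cycle together with a few cross-implications, exploiting the definitions and Theorems~\ref{th444} and \ref{th6}. I would first observe that (iii)$\Leftrightarrow$(ii) for graphs without isolated vertices follows almost immediately from Theorem~\ref{th444}: if $G\in\mathbf{W}_{2}$, then $\alpha(G-v)=\alpha(G)$ and $G-v$ is well-covered for every $v$, so (taking $A=\{v\}$, or simply picking any vertex and noting $G$ is well-covered by $W_2\subseteq W_1$) $G$ is well-covered and deleting any vertex leaves a well-covered graph, i.e.\ $G$ is $1$-well-covered; conversely, if $G$ is $1$-well-covered without isolated vertices, then $G-v$ is well-covered, and Lemma~\ref{lem1} gives $\alpha(G-v)=\alpha(G)$, so Theorem~\ref{th444} yields $G\in\mathbf{W}_{2}$. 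For (i)$\Leftrightarrow$(ii), the only subtlety is the role of $P_3$: a $1$-well-covered graph is well-covered by definition, so (ii)$\Rightarrow$ each $G-v$ is well-covered; and $P_{3}$ is well-covered but $P_3-v$ for $v$ the center is $2K_1$, which is well-covered, so actually the issue is that $P_3$ is well-covered with $G-v$ well-covered for all $v$ but $P_3\notin\mathbf{W}_2$ (its two leaves are disjoint independent sets not extendable to disjoint maximum sets), hence $P_3$ must be explicitly excluded in (i); I would check that for $G\neq P_3$ without isolated vertices, ``$G-v$ well-covered for all $v$'' forces $G$ well-covered (this needs a short argument ruling out the case where $G$ itself fails to be well-covered while all vertex-deletions succeed).

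Next I would connect the ``extension'' characterizations (iv)--(vii) to (iii). The implication (iii)$\Rightarrow$(iv) is direct: given a non-maximum independent set $A$, since $G\in W_2$ and $\{A\}$ alone (viewed as one of two disjoint families, paired with $\emptyset$, or better: take $A$ and any single vertex $v\in S\setminus A$ for some $S\in\Omega(G)$ containing $A$, as two disjoint independent sets) extends to two disjoint maximum independent sets $S_1,S_2$, so $B_i=S_i\setminus A$ work. Then (iv)$\Rightarrow$(v) is trivial (disjoint implies different, after noting $A$ non-maximum forces $B_1,B_2$ nonempty). For (v)$\Rightarrow$(iii), I would use Theorem~\ref{th444}: I must show $\alpha(G-v)=\alpha(G)$ and $G-v$ well-covered for each $v$. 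Applying (v) with $A=\emptyset$ (if $G$ itself is not ``maximum''—i.e.\ has more than one maximum independent set) gives two distinct maximum independent sets, at least one avoiding any prescribed vertex, handling $\alpha(G-v)=\alpha(G)$; for well-coveredness of $G-v$, take a maximal independent set $T$ in $G-v$, note $T\cup\{v\}$ might not be independent, but $T$ is a non-maximum (or maximum) independent set in $G$—if non-maximum, apply (v) to extend $T$ two ways, and argue that one extension must avoid $v$, contradicting maximality of $T$ in $G-v$ unless $|T|=\alpha(G)$.

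For (vi) and (vii): (iii)$\Rightarrow$(vi) follows since $A$ and $B$ are two disjoint independent sets, hence extend to disjoint maximum sets $S_A\supseteq A$, $S_B\supseteq B$, and $S_A$ is the required $S$ (as $S_A\cap S_B=\emptyset\supseteq S_A\cap B$). Then (vi)$\Rightarrow$(vii) is the special case $B=\{v\}$ (valid since $v\notin A$ and a single non-isolated vertex is a non-maximum independent set—here I should check $\{v\}$ is non-maximum, which holds as $G$ has no isolated vertices and... well, unless $\alpha(G)=1$, i.e.\ $G$ complete; that degenerate case must be handled separately, but a complete graph without isolated vertices is trivially in all the classes). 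Finally (vii)$\Rightarrow$(iv): given non-maximum $A$, extend $A$ to some $S_1\in\Omega(G)$ (possible since $G$ will be known well-covered by then, or we bootstrap), pick $v\in S_1\setminus A$, apply (vii) to get $S_2\ni A$, $v\notin S_2$, so $S_2\neq S_1$ and $B_i=S_i\setminus A$ are disjoint—wait, they need not be disjoint, so instead I would iterate: this suggests routing (vii)$\Rightarrow$(iii) directly via Theorem~\ref{th444} much as in (v)$\Rightarrow$(iii), which is cleaner.

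The main obstacle I anticipate is the careful bookkeeping around degenerate and exceptional cases—isolated vertices are excluded by hypothesis, but one must still watch out for $G=P_3$ in (i), for $G$ complete (where $\alpha(G)=1$ and several ``non-maximum independent set'' hypotheses become vacuous), and for the distinction in (v) between ``different'' and ``disjoint'' sets $B_1,B_2$, which is exactly where the strength of $\mathbf{W}_2$ (disjointness) is needed and must be recovered from the weaker hypothesis. Establishing (v)$\Rightarrow$(iii) rigorously—extracting disjointness from mere distinctness via the matching criterion of Theorem~\ref{th6}—is the technical heart of the argument; everything else is a routine unwinding of definitions once the correct implication graph is fixed.
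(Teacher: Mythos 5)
The decisive gap is in (i)$\Rightarrow$(ii). You defer exactly the step that carries all the content of this implication: showing that if $G\neq P_{3}$ has no isolated vertices and $G-v$ is well-covered for every $v$, then $G$ itself is well-covered. You call this ``a short argument'' and never supply it, but it is the structural heart of the proof: assuming $G$ is not well-covered, one takes a maximal independent set $A\notin\Omega(G)$, observes $\alpha(G-v)=|A|=\alpha(G)-1$ for every $v\in V(G)-A$, so every such $v$ lies in \emph{all} maximum independent sets, whence $B=V(G)-A$ is independent, $G$ is bipartite with $\Omega(G)=\{B\}$, and then enlarging $A-\{a\}$ inside the well-covered graph $G-a$ forces $|A|=1$, $|B|=2$, i.e.\ $G=P_{3}$. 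Nothing in your outline produces this, and the surrounding discussion suggests you have not worked out the exceptional case: you assert that $P_{3}$ is well-covered, which is false (its center is a maximal independent set of size $1<\alpha(P_{3})=2$); the actual reason $P_{3}$ must be excluded from (i) is that $P_{3}-v$ is well-covered for every $v$ while $P_{3}$ itself is not, so without the exclusion (i) would not even imply well-coveredness. You also misplace the ``technical heart'': (v)$\Rightarrow$(ii)/(iii) needs no matching argument and no Theorem~\ref{th6} at all --- once (v) yields well-coveredness, a maximal independent set $T$ of $G-v$ that is not maximum can only be enlarged in $G$ to $T\cup\{v\}$, contradicting the existence of two different enlargements, and disjointness is then recovered for free via Theorem~\ref{th444} --- whereas (i)$\Rightarrow$(ii) is where the real work sits.

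There are secondary slips that would also need repair. In (iii)$\Rightarrow$(iv), pairing $A$ with a single vertex $v$ and taking $B_{i}=S_{i}\setminus A$ fails: $A\cup(S_{2}\setminus A)=A\cup S_{2}$ is in general neither independent nor of size $\alpha(G)$; you must pair $A$ with a full completion $B$ (so $A\cup B\in\Omega(G)$, $A\cap B=\emptyset$), extend $A$ and $B$ to disjoint $S_{1},S_{2}\in\Omega(G)$, and take $B_{1}=S_{1}\setminus A$, $B_{2}=B$. In (v)$\Rightarrow$(iii), the claim ``two distinct maximum independent sets, at least one avoiding any prescribed vertex'' is false (distinct maximum sets may share any given vertex); $\alpha(G-v)=\alpha(G)$ should instead come from Lemma~\ref{lem1} after noting that (v) forces $G$ to be well-covered. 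On the positive side, your route (ii)$\Leftrightarrow$(iii) via Theorem~\ref{th444} plus Lemma~\ref{lem1} is correct and nicely bypasses the paper's appeal to Staples' thesis, and your (iii)$\Rightarrow$(vi)$\Rightarrow$(vii) and the plan to close (vii)$\Rightarrow$(iii) through Theorem~\ref{th444} are sound; but as it stands the proposal is missing the one argument on which the theorem actually turns.
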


\begin{proof}
\emph{(i)} $\Rightarrow$ \emph{(ii) }Let $G\neq P_{3}$ be a graph such that
$G-v$ is well-covered, for every $v\in V(G)$.

In order to show that $G$ is $1$-well-covered, it is sufficient to show that
$G$ is well-covered. Suppose, to the contrary, that $G$ is not well-covered,
i.e., there is some maximal independent set $A$ in $G$ such that
$A\notin\Omega(G)$. Let $v\in V(G)-A$. Since $A$ is a maximal independent set
also in $G-v$, and $G-v$ is well-covered, it follows that $\alpha
(G-v)=\left\vert A\right\vert <\alpha(G)$. Hence, we get that $\alpha
(G-v)=\alpha(G)-1$, because, in general, $\alpha(G)-1\leq\alpha(G-v)$.
Consequently, every $v\in V(G)-A$ belongs to all maximum independent sets of
$G$. Therefore, $B=V(G)-A$ is an independent set in $G$, included in each
$S\in\Omega(G)$. It follows that $G$ is bipartite, with the bipartition
$\{A,B\}$. Since\textbf{ }$G$\textbf{ }is connected, $N(v)\cap B\neq\emptyset$
holds for every $v\in A$, and because, in addition, each maximum independent
set of $G$ contains $B$, it follows that $\Omega(G)=\{B\}$.

Let $a\in A$. Then $G-a$ is well-covered with
\[
\alpha(G-a)=\alpha(G)=\left\vert B\right\vert =\left\vert A\right\vert +1.
\]
Since $A-\{a\}$ is independent, it is possible to enlarge it to a maximum
independent set in $G-a$. Thus there exist $b_{1},b_{2}\in B$ such that
$\left(  A-\{a\}\right)  \cup\{b_{1},b_{2}\}$ is a maximum independent set in
$G-a$. Hence, $\left(  A-\{a\}\right)  \cup\{b_{1},b_{2}\}\in\Omega(G)$,
because $\left\vert \left(  A-\{a\}\right)  \cup\{b_{1},b_{2}\}\right\vert
=\alpha(G)$. Consequently, $\left(  A-\{a\}\right)  \cup\{b_{1},b_{2}\}=B$.
Finally, $A=\{a\}$ and $B=\{b_{1},b_{2}\}$. In other words, $G=P_{3}$, which
contradicts the hypothesis.

\emph{(ii)} $\Leftrightarrow$ \emph{(iii) }In \cite{StaplesThesis} it is shown
that for connected graphs \emph{(ii)} and \emph{(iii)} are equivalent.
Clearly, it can be relaxed to the condition that the graphs under
consideration have no isolated vertices.

\emph{(iii)} $\Rightarrow$ \emph{(i) }According to Theorem \ref{th444}, every
graph $G\in\mathbf{W}_{2}$ has the property that $G-v$ is well-covered, for
each $v\in V(G)$. In addition, $G\neq P_{3}$, since $P_{3}$ is even not well-covered.

\emph{(iii)} $\Rightarrow$ \emph{(iv)} Assume, to the contrary, that for some
non-maximum independent set $A$ in $G$ there is only one independent set, say
$B$, such that $A\cup B\in\Omega(G)$. Clearly, such a set $B$ must exist
because $G$ is well-covered, and we may suppose that $A\cap B=\emptyset$.
Since $G$ is in the class $\mathbf{W}_{2}$, it follows that there are
$S_{1},S_{2}\in\Omega(G)$, such that $A\subset S_{1},B\subset S_{2}$ and
$S_{1}\cap S_{2}=\emptyset$. Hence, $B\cap S_{1}=\emptyset$ which ensures that
$A$ can be extended to two maximum independent sets in $G$ by two disjoint
independent sets, namely, $B$ and $S_{1}-A$, in contradiction with the
assumption on $A$.

\emph{(iii)} $\Rightarrow$ \emph{(vi)} If $A$ is a non-maximum independent set
and $v\notin A$, then by definition of the class $\mathbf{W}_{2}$, it follows
that there are two disjoint maximum independent sets $S_{1},S_{2}$ in $G$,
such that $A\subset S_{1}$ and $\{v\}\subset S_{2}$. Clearly, $v\notin S_{1}$.

\emph{(iv)} $\Rightarrow$ \emph{(v) }It is clear.

\emph{(v)} $\Rightarrow$ \emph{(ii)} Evidently, $G$ is well-covered. Suppose,
to the contrary, that $G$ is not $1$-well-covered, i.e., there is some $v\in
V(G)$, such that $G-v$ is not well-covered. Hence, $v$ cannot be an isolated
vertex, and Lemma \ref{lem1} implies $\alpha(G-v)=\alpha(G)$. There exists
some maximal independent set $A$ in $G-v$, such that $\left\vert A\right\vert
<\alpha(G-v)$, because $G-v$ is not well-covered. Hence, for each $w\in
V(G)-\left(  A\cup\left\{  v\right\}  \right)  $ the set $A\cup\left\{
w\right\}  $ is not independent in $G-v$ and, consequently, in $G$. Therefore,
there is only one enlargement of $A$, namely $A\cup\left\{  v\right\}  $, to a
maximum independent set of $G$, in contradiction with the hypothesis.

\emph{(vi)} $\Rightarrow$ \emph{(vii) }It is evident.

\emph{(vii)} $\Rightarrow$ \emph{(ii)} Clearly, $G$ is well-covered. Assume,
to the contrary, that $G$ is not $1$-well-covered, i.e., there is some
$v_{0}\in V(G)$, such that $G-v_{0}$ is not well-covered. Since $v_{0}$ cannot
be isolated, Lemma \ref{lem1} implies $\alpha(G-v_{0})=\alpha(G)$. Further,
there exists some maximal independent set $A$ in $G-v_{0}$, with $\left\vert
A\right\vert <\alpha(G-v_{0})=\alpha(G)$. In other words, $A$ is a non-maximum
independent set in $G$ and $v_{0}\notin A$. By the hypothesis, there is a
maximum independent set $S$ in $G$, such that $A\subset S$ and $v_{0}\notin
S$. It follows that $S$ is an independent set in $G-v_{0}$, larger than $A$,
in contradiction to the maximality of $A$ in $G-v_{0}$. Therefore, $G$ must be
$1$-well-covered.
\end{proof}

We can now give alternative proofs for the following.

\begin{corollary}
\label{cor4444}Let $G$ be a graph belonging to $\mathbf{W}_{\mathbf{2}}$.

\emph{(i)} \cite{Pinter1991} For every non-maximum independent set $S$ in $G$,
the graph $G-N[S]$ is in the class $\mathbf{W}_{2}$ as well.

\emph{(ii)} \cite{Staples} If $G\neq K_{2}$\emph{ }is connected, then $G$ has
no leaf.
\end{corollary}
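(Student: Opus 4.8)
The plan is to prove both parts of Corollary \ref{cor4444} by reducing to the characterizations of $\mathbf{W}_{\mathbf{2}}$ that Theorem \ref{th1} has just established, rather than working from the original definition.

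For part (i), the strategy is to use item (vi) of Theorem \ref{th1} to verify directly that $G' := G - N[S]$ has no isolated vertices and lies in $\mathbf{W}_{\mathbf{2}}$. Since $G \in \mathbf{W}_{\mathbf{2}}$ and $S$ is a non-maximum independent set, item (iv) gives two disjoint independent sets $B_1, B_2$ with $S \cup B_1, S \cup B_2 \in \Omega(G)$; both $B_1, B_2 \subseteq V(G')$ and each is a maximum independent set of $G'$, and they are disjoint, so $\alpha(G') = \alpha(G) - |S|$ and $G'$ itself is well-covered. To get membership in $\mathbf{W}_{\mathbf{2}}$, take any two disjoint non-maximum independent sets $A, B$ of $G'$. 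Then $S \cup A$ and $S \cup B$ are independent in $G$ (because $A, B$ avoid $N[S]$), each of size at most $|S| + \alpha(G') - 1 < \alpha(G)$, so they are non-maximum in $G$. Now I would apply item (vi) to the pair $S \cup A$ (as the set to be included) and $B$ (as the set to be avoided) — after checking $B$ is disjoint from $S \cup A$ — obtaining $T \in \Omega(G)$ with $S \cup A \subseteq T$ and $B \cap T = \emptyset$. Restricting, $T \setminus N[S]$ is a maximum independent set of $G'$ containing $A$ and disjoint from $B$. The one thing to be careful about is ruling out isolated vertices in $G'$: if $w$ were isolated in $G'$, then $\{w\}$ and any maximum independent set of $G'$ not containing $w$ would, pulled back with $S$, contradict the enlargement property — so this case is handled by the same item (vi) argument. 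Also one should note $G'$ could be the empty graph, which by convention lies in $\mathbf{W}_{\mathbf{2}}$, so that degenerate case is fine.

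For part (ii), suppose $G \neq K_2$ is connected and has a leaf $v$ with unique neighbor $u$. The goal is to exhibit a non-maximum independent set $A$ and a vertex $v_0 \notin A$ for which no $S \in \Omega(G)$ satisfies $A \subseteq S$, $v_0 \notin S$, contradicting item (vii). Every maximum independent set of $G$ contains exactly one of $u, v$; I would argue that in fact $v$ lies in every maximum independent set: given $S \in \Omega(G)$ with $u \in S$, replacing $u$ by $v$ stays independent and maximum, but then $N(u) \setminus \{v\}$ is nonempty (since $G$ is connected and $G \neq K_2$) and those neighbors cannot be added, so one examines whether $u$ can ever be "essential." The cleaner route: take $A = \{v\}$, which is independent and non-maximum (as $\alpha(G) \geq 2$), and $v_0 = u$. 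Any $S \in \Omega(G)$ containing $v$ automatically excludes $u$, so item (vii) is satisfied trivially here — so this choice is not yet a contradiction. Instead I would take $A = N(u) \setminus \{v\}$ together with the observation that $A \cup \{v\}$ may fail to be independent; the actual obstruction is that $G - v$ is not well-covered. So the real argument is: by Theorem \ref{th444}, $G \in \mathbf{W}_{\mathbf{2}}$ forces $G - u$ to be well-covered; but $v$ is isolated in $G - u$, contradicting the fact (from Theorem \ref{th444} or the discussion preceding Theorem \ref{th1}) that a graph in $\mathbf{W}_{\mathbf{2}}$ has no isolated vertices — unless $G - u$ has no other vertices, i.e. $G = K_2$. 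This is the quickest contradiction.

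The main obstacle is part (i): one must check simultaneously that $G - N[S]$ has no isolated vertices (so that the equivalences of Theorem \ref{th1} apply in the form "$\in \mathbf{W}_{\mathbf{2}}$") and that the enlargement/avoidance property transfers cleanly, keeping track that the independent sets $S \cup A$, $S \cup B$ built inside $G$ are genuinely non-maximum and that disjointness hypotheses of item (vi) hold. Part (ii) is essentially immediate once one invokes Theorem \ref{th444} and the no-isolated-vertex remark.
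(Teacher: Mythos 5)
Your part (i) is correct, though it takes a slightly longer route than the paper: the paper simply notes that for a non-maximum independent set $A$ of $G-N[S]$ the set $A\cup S$ is a non-maximum independent set of $G$, applies Theorem \ref{th1}\emph{(iv)} to get disjoint $S_{1},S_{2}$ with $A\cup S\cup S_{1},A\cup S\cup S_{2}\in\Omega(G)$, and reads off that $A\cup S_{1},A\cup S_{2}$ witness condition \emph{(iv)} for $G-N[S]$; you instead verify condition \emph{(vi)} by a transfer argument, which is fine. Two small blemishes: the step ``$B_{1},B_{2}$ are disjoint maximum independent sets of $G'$, so $G'$ is well-covered'' is a non sequitur as written (harmless, since your \emph{(vi)}-verification never uses it), and your sentence about an isolated vertex $w$ is garbled --- if $w$ were isolated there would be no maximum independent set of $G'$ avoiding it. The clean fix is already in your hands: such a $w$ would belong to both of the disjoint maximum independent sets $B_{1},B_{2}$ of $G'$ that you produced, a contradiction.

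Part (ii), however, has a genuine gap in the argument you finally commit to. Theorem \ref{th444} only gives that $G-u$ is well-covered with $\alpha(G-u)=\alpha(G)$; you then claim a contradiction because $v$ is isolated in $G-u$ and ``a graph in $\mathbf{W}_{2}$ has no isolated vertices.'' But $G-u$ is not known to lie in $\mathbf{W}_{2}$, and in general it does not (for $G=C_{5}\in\mathbf{W}_{2}$ one has $G-u=P_{4}\notin\mathbf{W}_{2}$); moreover well-covered graphs can have isolated vertices (e.g.\ $K_{3}\cup K_{1}$), so ``$G-u$ is well-covered and $v$ is isolated in $G-u$'' is not contradictory --- it merely says $v$ lies in every maximal independent set of $G-u$. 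The paper's proof uses the $\mathbf{W}_{2}$ property of $G$ itself: choose $w\in N(u)\setminus\{v\}$ (it exists because $G$ is connected and $G\neq K_{2}$); by Theorem \ref{th1}\emph{(vii)} there is $S\in\Omega(G)$ with $w\in S$ and $v\notin S$; since $u\in N(w)$ we get $u\notin S$, so $S\cup\{v\}$ is independent of size $\alpha(G)+1$, a contradiction. Alternatively, you can repair your idea by deleting a closed neighbourhood instead of a single vertex: apply your own part (i) with $S=\{w\}$ (non-maximum since $\{v,w\}$ is independent); then $v$ is isolated in the non-empty graph $G-N[w]\in\mathbf{W}_{2}$, contradicting the remark after Theorem \ref{th444} that a (non-empty) graph in $\mathbf{W}_{2}$ has no isolated vertices.
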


\begin{proof}
\emph{(i)} Let $S$ be a non-maximum independent set in $G$ and $A$ be a
non-maximum independent set in $G-N[S]$. Then $A\cup S$ is a non-maximum
independent set in $G$, and according to Theorem \ref{th1}\emph{(iv)}, there
exist two disjoint independent sets $S_{1},S_{2}$ in $G$ such that $A\cup
S\cup S_{1},A\cup S\cup S_{2}\in\Omega(G)$. Hence, $A\cup S_{1},A\cup S_{2}$
are maximum independent sets in $G-N[S]$. By Theorem \ref{th1}\emph{(iv)}, it
follows that $G-N[S]$ belongs to $\mathbf{W}_{2}$.

\emph{(ii)} Assume, to the contrary, that $G$ has a leaf, say $v$. Let
$N\left(  v\right)  =\left\{  u\right\}  $ and $w\in N\left(  u\right)
-\left\{  v\right\}  $. By Theorem \ref{th1}\emph{(vii)}, there exists some
$S\in\Omega\left(  G\right)  $, such that $\left\{  w\right\}  \subset S$ and
$v\notin S$. Hence, we infer that $S\cup\left\{  v\right\}  $ is independent,
contradicting the fact that $\left\vert S\cup\left\{  v\right\}  \right\vert
>\left\vert S\right\vert =\alpha\left(  G\right)  $.
\end{proof}

\begin{corollary}
\label{cor5}\cite{Pinter1991} If $G\in$ $\mathbf{W}_{\mathbf{2}}$, then
$G-N[v]\in\mathbf{W}_{\mathbf{2}}$, for each $v\in V(G)$.
\end{corollary}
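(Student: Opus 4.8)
The plan is to obtain this statement as the special case $S=\{v\}$ of Corollary \ref{cor4444}\emph{(i)}. First I would note that $\{v\}\in\mathrm{Ind}(G)$ with $N[\{v\}]=N[v]$, so that the graph occurring in that corollary is precisely $G-N[v]$. If $\alpha(G)\geq 2$, then $\{v\}$ is a non-maximum independent set, and Corollary \ref{cor4444}\emph{(i)}, applied with $S=\{v\}$, yields at once that $G-N[v]\in\mathbf{W}_{2}$.

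The only situation requiring separate treatment is $\alpha(G)=1$, when $\{v\}$ is itself a maximum independent set and the hypothesis of Corollary \ref{cor4444}\emph{(i)} is not met. In that case every two vertices of $G$ are adjacent, so $G$ is complete; since a graph in $\mathbf{W}_{2}$ has no isolated vertex (by Theorem \ref{th444}, as already observed after Lemma \ref{lem1}), we get $G=K_{n}$ with $n\geq 2$, whence $N[v]=V(G)$ and $G-N[v]$ is the empty graph, which belongs to $\mathbf{W}_{2}$ by the convention that the empty graph lies in every class $W_{n}$. This exhausts all possibilities.

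I do not anticipate any real obstacle here, since the assertion is weaker than Corollary \ref{cor4444}\emph{(i)}; the only thing to watch is the degenerate case above. If a self-contained argument were preferred, one could simply rerun the proof of Corollary \ref{cor4444}\emph{(i)} with $S=\{v\}$: a non-maximum independent set $A$ of $G-N[v]$ gives the non-maximum independent set $A\cup\{v\}$ of $G$, Theorem \ref{th1}\emph{(iv)} supplies disjoint independent sets $B_{1},B_{2}$ with $A\cup\{v\}\cup B_{1},A\cup\{v\}\cup B_{2}\in\Omega(G)$, each $B_{i}$ avoids $N[v]$ because it lies in an independent set containing $v$, and hence $A\cup B_{1}$ and $A\cup B_{2}$ are maximum independent sets of $G-N[v]$ witnessing Theorem \ref{th1}\emph{(iv)} for $A$; applying that equivalence once more gives $G-N[v]\in\mathbf{W}_{2}$.
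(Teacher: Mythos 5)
Your proposal is correct and takes essentially the same route as the paper, which states this corollary (without a separate proof) as the immediate specialization $S=\{v\}$ of Corollary \ref{cor4444}\emph{(i)}; your extra care with the degenerate case $\alpha(G)=1$ (where $G=K_{n}$, $n\geq 2$, and $G-N[v]$ is the empty graph, in $\mathbf{W}_{2}$ by convention) is a harmless refinement of that same argument.
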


The \textit{differential of a set} $A\subseteq V(G)$ is $\partial\left(
A\right)  $ $=\left\vert N(A)-A\right\vert -\left\vert A\right\vert $
\cite{Mashburn2006,BermudoFernau2012,Bermudo2014}. Clearly, if $S$ is an
independent set, then $\partial\left(  S\right)  =\left\vert N(S)\right\vert
-\left\vert S\right\vert $. The number
\[
\partial\left(  G\right)  =\max\left\{  \partial\left(  A\right)  :A\subseteq
V(G)\right\}
\]
is the \textit{differential of the graph} $G$. For instance, $\partial\left(
K_{p,q}\right)  =p+q-2$, while $\partial\left(  C_{7}\right)  =2$ and
$\partial\left(  C_{9}\right)  =3$.

\begin{theorem}
\label{prop11} If a connected\textbf{ }graph $G\neq K_{2}$ belongs to the
class $W_{2}$, then the following assertions hold:

\emph{(i)} for each $v\in V(G)$, there exist at least two disjoint sets
$S_{1},S_{2}\in\Omega\left(  G\right)  $ such that $v\notin S_{1}\cup S_{2}$;

\emph{(ii) }$G$ has at least $2\alpha(G)+1$ vertices;

\emph{(iii)} for every $u,v\in V(G)$, there is some $S\in\Omega(G)$, such that
$S\cap\{u,v\}=\emptyset$;

\emph{(iv)} $\alpha(G)\leq\mu(G)$ and $\alpha(G)+\mu(G)\leq\left\vert
V(G)\right\vert -1$;

\emph{(v)} $\alpha(G)=\alpha(G-S)$ holds for each independent set $S$;

\emph{(vi) }if\emph{ }$A\subseteq B\in$ \textrm{Ind}$\left(  G\right)  $, then
$\partial\left(  A\right)  \leq\partial\left(  B\right)  $; i.e., $\partial$
is monotonic over \textrm{Ind}$\left(  G\right)  $;

\emph{(vii) }$G$ is regularizable and $\left\vert B\right\vert <\left\vert
N(B)\right\vert $ for every independent set $B$;

\emph{(viii) }$\left\vert A\right\vert \leq\alpha\left(  G\left[  N\left(
A\right)  \right]  \right)  $ is true for every independent set $A$;

\emph{(ix)} for each $A\in$ \textrm{Ind}$\left(  G\right)  $ there is a
matching from $A$ into an independent set.
\end{theorem}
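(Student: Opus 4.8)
The plan is to prove the nine statements essentially one at a time, drawing on the equivalences in Theorem~\ref{th1}, on Corollary~\ref{cor4444}, on Lemma~\ref{lem1}, and on the quoted Theorems~\ref{th6} and~\ref{th11}. The standing preliminary observation will be that $\deg_G(v)\ge 2$ for every $v\in V(G)$: indeed $G$ has no isolated vertices because $G\in W_2$, and $G$ has no leaf by Corollary~\ref{cor4444}\emph{(ii)}.

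For \emph{(i)} I would pick two distinct neighbors $a,a'$ of $v$ (possible since $\deg_G(v)\ge2$) and apply the definition of the class $W_2$ to the two disjoint independent sets $\{a\}$ and $\{a'\}$; the resulting disjoint $S_1,S_2\in\Omega(G)$ with $a\in S_1$ and $a'\in S_2$ then both avoid $v$. Statement \emph{(ii)} follows at once, as $S_1\cup S_2$ has $2\alpha(G)$ vertices and does not contain $v$. For \emph{(iii)} with $u\ne v$ I would delete $u$: by Theorem~\ref{th444} the graph $G-u$ is well-covered with $\alpha(G-u)=\alpha(G)$, and $v$ is not isolated in $G-u$ because $\deg_G(v)\ge2$, so Lemma~\ref{lem1} gives $\alpha(G-u-v)=\alpha(G)$ and any maximum independent set of $G-u-v$ is the required $S$ (for $u=v$ the claim is Lemma~\ref{lem1} itself). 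For \emph{(iv)}, I would get $\alpha(G)\le\mu(G)$ by applying Theorem~\ref{th6} to the disjoint $S_1,S_2\in\Omega(G)$ from \emph{(i)}: being disjoint from the maximum independent set $S_1$, the independent set $S_2$ matches into $S_1$, which is a matching of size $\alpha(G)$; and $\alpha(G)+\mu(G)\le|V(G)|-1$ would follow by combining $\mu(G)\le|V(G)|/2$ with $\alpha(G)\le(|V(G)|-1)/2$ (from \emph{(ii)}) and integrality.

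For \emph{(v)}, given a nonempty independent set $S$, I would choose $v\in S$ and a neighbor $u\notin S$; the sets $S$ and $\{u\}$ are disjoint and independent, so the class $W_2$ provides disjoint $T_1\supseteq S$ and $T_2\supseteq\{u\}$ in $\Omega(G)$, whence $T_2$ is a maximum independent set of $G-S$ and $\alpha(G-S)=\alpha(G)$. For \emph{(vi)}, given $A\subsetneq B\in\mathrm{Ind}(G)$, I would set $C=B\setminus A$; since $A$ is non-maximum (a proper subset of the independent set $B$), $H:=G-N[A]\in W_2$ by Corollary~\ref{cor4444}\emph{(i)}, so $H$ is well-covered, hence quasi-regularizable, giving $|N_H(C)|\ge|C|$; because $A\cup C=B$ is independent one has $N_H(C)=N_G(C)\setminus N_G(A)$ and $N_G(B)=N_G(A)\cup N_G(C)$, so $\partial(B)-\partial(A)=|N_H(C)|-|C|\ge0$. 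For \emph{(vii)}: first, $G$ cannot be a bipartite graph with partite sets of equal size, since that would give $\alpha(G)\ge|V(G)|/2$, contradicting \emph{(ii)}; second, for a nonempty independent set $B$ and any $v\in B$, \emph{(vi)} together with $\deg_G(v)\ge2$ gives $\partial(B)\ge\partial(\{v\})=\deg_G(v)-1\ge1$, i.e.\ $|N(B)|>|B|$; then Theorem~\ref{th11}\emph{(i)} yields that $G$ is regularizable. For \emph{(viii)} I would use \emph{(v)} to get $T\in\Omega(G)$ with $T\cap A=\emptyset$; the set $T\cap(V(G)\setminus N[A])$ is independent in $G-N[A]$, so has size at most $\alpha(G-N[A])$, which equals $\alpha(G)-|A|$ because $G$ is well-covered, forcing $|T\cap N(A)|\ge|A|$, and $T\cap N(A)$ is an independent subset of $N(A)$. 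Finally, \emph{(ix)} follows from \emph{(v)} and Theorem~\ref{th6}: the independent set $A$ is disjoint from the maximum independent set $T$ obtained in \emph{(v)}, hence matches into the independent set $T$.

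The only point I expect to require real care is the bookkeeping in \emph{(vi)} — correctly identifying the neighborhood of $C$ inside $G-N[A]$ with $N_G(C)\setminus N_G(A)$ (using that $B$, and hence $A\cup C$, is independent) before the differential computation goes through. Everything else is short once the degree bound $\deg_G(v)\ge2$ and the quoted results are in hand; the degenerate case $\alpha(G)=1$, i.e.\ $G=K_n$ with $n\ge3$, is covered automatically by the arguments above but is worth checking separately, especially in \emph{(i)}.
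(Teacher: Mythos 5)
Your proposal is correct, and while parts (i), (ii), (iv), (v) follow the paper's proof almost verbatim (two neighbors of $v$ plus the $W_2$ definition for (i)--(ii); Berge's Theorem~\ref{th6} applied to the two disjoint maximum sets for (iv); the pair $S,\{u\}$ for (v)), several of your other parts take genuinely different routes. For (iii) the paper simply reuses (i) (one of $S_{1},S_{2}$ misses $u$), whereas you delete $u$ and invoke Theorem~\ref{th444} plus Lemma~\ref{lem1}; both are short, the paper's being slightly more economical. For (vi) the paper works directly from the $W_2$ definition: it picks $S\in\Omega(G)$ with $A\subset S$, $S\cap(B-A)=\emptyset$, and uses Berge's matching theorem to bound $|S\cap N(B)|\geq|B|-|A|$; your localization argument via Corollary~\ref{cor4444}\emph{(i)} (passing to $H=G-N[A]\in\mathbf{W}_{2}$) plus quasi-regularizability of well-covered graphs is equally valid and perhaps conceptually cleaner, the only point worth stating explicitly being that $H$, when nonempty, has no isolated vertices (it is in $\mathbf{W}_{2}$), so the quoted quasi-regularizability result applies. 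For (vii) the paper rules out bipartiteness by citing Corollary~\ref{cor6}, which appears only after the theorem and itself rests on part (ii); your direct argument that $G$ cannot be bipartite with equal partite sets (since that would force $\alpha(G)\geq|V(G)|/2$, contradicting (ii)) is self-contained and avoids that forward reference, and it suffices for Theorem~\ref{th11}\emph{(i)}. For (viii) the paper argues by contradiction via Theorem~\ref{th1}\emph{(vi)}, while you give a direct count using (v) and the identity $\alpha(G-N[A])=\alpha(G)-|A|$ for well-covered $G$; and for (ix) the paper applies the $W_2$ definition to $A$ and a set $B\subseteq N(A)$, while you get the matching immediately from (v) and Theorem~\ref{th6} — your versions of (viii) and (ix) are, if anything, shorter. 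In sum: same skeleton for the first half, and for the rest a reorganization that leans more heavily on parts already proved ((ii), (v)) and on Corollary~\ref{cor4444}, at the mild cost of importing quasi-regularizability where the paper uses only Berge's matching theorem.
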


\begin{proof}
\emph{(i)} and \emph{(ii)} Let $v\in V(G)$. By Corollary \ref{cor4444}%
\emph{(ii)}, $\left\vert N(v)\right\vert \geq2$. Suppose $u,w\in N(v)$. Then
there are at least two disjoint maximum independent sets $S_{1},S_{2}$ in $G$
such that $u\in S_{1},w\in S_{2}$, because $G\in\mathbf{W}_{2}$. Since
$vu,vw\in E(G)$, it follows that $v\notin S_{1}\cup S_{2}$. Consequently, we
obtain
\[
1+2\alpha\left(  G\right)  =\left\vert \left\{  v\right\}  \cup S_{1}\cup
S_{2}\right\vert \leq\left\vert V\left(  G\right)  \right\vert ,
\]
as claimed.

\emph{(iii) }Let $u,v\in V(G)$. By Part \emph{(i)}, there are two disjoint
maximum independent sets $S_{1},S_{2}$ in $G$ such that $v\notin S_{1}\cup
S_{2}$. Hence, $u$ belongs to at most one of $S_{1},S_{2}$, say to $S_{1}$.
Therefore, $S_{2}\cap\{u,v\}=\emptyset$.

\emph{(iv) }Let $v\in V(G)$. According to Part \emph{(i)}, there are at least
two disjoint maximum independent sets $S_{1},S_{2}$ in $G$ such that $v\notin
S_{1}\cup S_{2}$. By Theorem \ref{th6}, there is a perfect matching $M$ in
$H=G[S_{1}\cup S_{2}]$. Therefore, $\alpha\left(  G\right)  =\left\vert
M\right\vert \leq\mu\left(  G\right)  $. By Part \emph{(ii)}, we have
$\alpha(G)\leq(\left\vert V(G)\right\vert -1)/2$. Since $\mu\left(  G\right)
\leq\left\vert V(G)\right\vert /2$, we obtain
\[
\alpha(G)+\mu\left(  G\right)  \leq(\left\vert V(G)\right\vert
-1)/2+\left\vert V(G)\right\vert /2=\left\vert V(G)\right\vert -1/2,
\]
which means that $\alpha(G)+\mu\left(  G\right)  \leq\left\vert
V(G)\right\vert -1$.

\emph{(v)} Let $S$ be an independent set in $G$ and $v\in V(G)-S$. Since $G\in
W_{2}$, there exist two disjoint maximum independent sets $S_{1},S_{2}$ in $G$
such that $S\subseteq S_{1}$ and $v\in S_{2}$. Hence, $S_{2}\subseteq V(G)-S$
and this implies that $\left\vert S_{2}\right\vert \leq\alpha(G-S)\leq
\alpha(G)$, i.e., $\alpha(G)=\alpha(G-S)$.

\emph{(vi) }The sets\emph{ }$A$ and $B-A$ are independent and disjoint. Then,
by definition of the class $W_{2}$, there is a maximum independent set $S$
including $A$ such that $S\cap\left(  B-A\right)  =\emptyset$. Hence,
$\left\vert N(A)\right\vert \leq\left\vert N(B)\right\vert -\left\vert S\cap
N(B)\right\vert $. By Berge's theorem there is a matching from $B-A$ into
$S-A$. It means that
\[
\left\vert S\cap N(B)\right\vert =\left\vert S\cap N(B-A)\right\vert
\geq\left\vert B-A\right\vert =\left\vert B\right\vert -\left\vert
A\right\vert \text{.}%
\]
Therefore,
\[
\left\vert N(A)\right\vert \leq\left\vert N(B)\right\vert -\left\vert S\cap
N(B)\right\vert \leq\left\vert N(B)\right\vert -\left(  \left\vert
B\right\vert -\left\vert A\right\vert \right)  \text{,}%
\]
which concludes with
\[
\partial\left(  A\right)  =\left\vert N(A)\right\vert -\left\vert A\right\vert
\leq\left\vert N(B)\right\vert -\left\vert B\right\vert =\partial\left(
B\right)  \text{.}%
\]

\emph{(vii) }If $G=K_{2}$, then $G$ is regularizable, according to Theorem
\ref{th11}\emph{(ii)}.

If $G\neq K_{2}$, then Corollary \ref{cor6} ensures that $G$ is not bipartite.
Suppose $B$ is an independent set and $v\in B$, i.e., $\{v\}\subseteq B$.
Hence, using Part \emph{(vi)}, we obtain%
\[
\deg\left(  v\right)  -1=\left\vert N(v)\right\vert -1\leq\left\vert
N(B)\right\vert -\left\vert B\right\vert \text{.}%
\]
Thus $\left\vert B\right\vert <\left\vert N(B)\right\vert $, since
$\deg\left(  v\right)  \geq2$, in accordance with Corollary \ref{cor4444}%
\emph{(ii)}. Finally, by Theorem \ref{th11}\emph{(i)}, $G$ is regularizable.

\emph{(viii) }Assume, to the contrary, that $\left\vert A\right\vert
>\alpha\left(  G\left[  N\left(  A\right)  \right]  \right)  $ for some
independent set $A$. Let $B$ be a maximum independent set in $G\left[
N\left(  A\right)  \right]  $. By Theorem \ref{th1}\emph{(vi)} there exists
$S\in\Omega(G)$ such that $B\subset S$ and $A\cap S=\emptyset$. Since $\left(
N\left(  A\right)  -B\right)  \cap\left(  S-B\right)  =\emptyset$, we infer
that $A\cup\left(  S-B\right)  $ is independent. Finally,
\[
\left\vert A\cup\left(  S-B\right)  \right\vert =\left\vert A\right\vert
+\left\vert S-B\right\vert >\left\vert S\right\vert =\alpha(G),
\]
which is a contradiction.

\emph{(ix)} Let $A$ and $B$ be an independent sets such that $B\subseteq
N\left(  A\right)  $. Since $G\in\mathbf{W}_{\mathbf{2}}$, there exist
disjoint maximum independent sets $S_{1}$, $S_{2}$ such that $A\subseteq
S_{1}$ and $B\subseteq S_{2}$. By Theorem \ref{th6}, there is a matching from
$S_{1}$ to $S_{2}$. Thus $A$ is matched into an independent set included in
$S_{2}\cap N\left(  A\right)  $.
\end{proof}

It is worth mentioning that there are graphs not in class $W_{2}$, that
satisfy Theorem \ref{prop11}; e.g., the graph $C_{7}$.

\begin{figure}[h]
\setlength{\unitlength}{1cm}\begin{picture}(5,1.15)\thicklines
\multiput(1,0)(1,0){3}{\circle*{0.29}}
\multiput(2,1)(1,0){2}{\circle*{0.29}}
\put(1,0){\line(1,0){2}}
\put(1,0){\line(1,1){1}}
\put(2,0){\line(0,1){1}}
\put(2,0){\line(1,1){1}}
\put(2,1){\line(1,0){1}}
\put(2,1){\line(1,-1){1}}
\put(3,0){\line(0,1){1}}
\qbezier(1,0)(2,-0.5)(3,0)
\put(0.65,0){\makebox(0,0){$x$}}
\put(3.35,1){\makebox(0,0){$y$}}
\put(0.3,0.5){\makebox(0,0){$G_{1}$}}
\multiput(5,0)(1,0){3}{\circle*{0.29}}
\multiput(5,1)(1,0){4}{\circle*{0.29}}
\put(5,0){\line(1,0){2}}
\put(5,0){\line(0,1){1}}
\put(6,0){\line(0,1){1}}
\put(7,1){\line(1,0){1}}
\put(7,0){\line(1,1){1}}
\put(7,0){\line(0,1){1}}
\put(5.35,1){\makebox(0,0){$a$}}
\put(6.35,1){\makebox(0,0){$b$}}
\put(4.3,0.5){\makebox(0,0){$G_{2}$}}
\multiput(10,0)(1,0){4}{\circle*{0.29}}
\multiput(11,1)(1,0){3}{\circle*{0.29}}
\put(10,0){\line(1,0){3}}
\put(10,0){\line(1,1){1}}
\put(11,1){\line(1,0){2}}
\put(13,0){\line(0,1){1}}
\put(9.2,0.5){\makebox(0,0){$C_{7}$}}
\end{picture}\caption{$\left\vert \left\{  x,y\right\}  \right\vert
>\alpha\left(  G_{1}\left[  N\left(  \left\{  x,y\right\}  \right)  \right]
\right)  $ and $\left\vert \left\{  a,b\right\}  \right\vert >\alpha\left(
G_{2}\left[  N\left(  \left\{  a,b\right\}  \right)  \right]  \right)  $.}%
\label{fig1}%
\end{figure}

Neither quasi-regularizable graphs nor well-covered graphs have to satisfy
Theorem \ref{prop11}\emph{(viii)}; e.g., the graphs $G_{1}$ and $G_{2}$ from
Figure \ref{fig1}, respectively.

Actually, Theorem \ref{prop11}\emph{(vi) }is a generalization of the following.

\begin{corollary}
\cite{Staples}\label{th44} If $S$ is an independent set in a connected graph
$G$ belonging the class $\mathbf{W}_{\mathbf{2}}$, then $\deg\left(  v\right)
\leq\left\vert N(S)\right\vert -\left\vert S\right\vert +1$ for every $v\in S$.
\end{corollary}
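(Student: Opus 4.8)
The plan is to read this off at once from the monotonicity of the differential proved in Theorem \ref{prop11}\emph{(vi)}: the corollary is precisely the instance $A=\{v\}$, $B=S$ of that statement. So fix an independent set $S$ in $G$ and a vertex $v\in S$; if $S=\emptyset$ there is nothing to prove, so assume $S\neq\emptyset$.

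First I would dispose of the case $G=K_{2}$, which falls outside the hypotheses of Theorem \ref{prop11}. Here the only independent set containing a vertex is a singleton, and for $S=\{v\}$ we have $\deg\left(v\right)=1=\left\vert N(S)\right\vert$ and $\left\vert S\right\vert=1$, whence $\deg\left(v\right)=1\leq 1=\left\vert N(S)\right\vert-\left\vert S\right\vert+1$, with equality. Now assume $G\neq K_{2}$, so that $G$ is a connected graph in $\mathbf{W}_{\mathbf{2}}$ different from $K_{2}$ and Theorem \ref{prop11}\emph{(vi)} applies. Applying it to the chain $\{v\}\subseteq S$ of independent sets gives $\partial\left(\{v\}\right)\leq\partial\left(S\right)$. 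Since $\{v\}$ and $S$ are both independent, $\partial\left(\{v\}\right)=\left\vert N(v)\right\vert-1=\deg\left(v\right)-1$ and $\partial\left(S\right)=\left\vert N(S)\right\vert-\left\vert S\right\vert$, so this reads $\deg\left(v\right)-1\leq\left\vert N(S)\right\vert-\left\vert S\right\vert$, which rearranges to the asserted inequality $\deg\left(v\right)\leq\left\vert N(S)\right\vert-\left\vert S\right\vert+1$.

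I do not expect any genuine obstacle here: the entire content is the special case of Theorem \ref{prop11}\emph{(vi)} recorded above — indeed essentially the same one-line computation already performed inside the proof of Theorem \ref{prop11}\emph{(vii)} — and the only points that need care are the identity $\partial\left(\{v\}\right)=\deg\left(v\right)-1$ for the singleton and the separate, trivial verification for $G=K_{2}$.
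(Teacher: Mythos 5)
Your proof is correct and matches the paper's intended derivation: the paper explicitly presents Corollary \ref{th44} as the special case $A=\{v\}$, $B=S$ of the monotonicity in Theorem \ref{prop11}\emph{(vi)}, and the same one-line computation $\deg(v)-1=\partial(\{v\})\leq\partial(S)=\left\vert N(S)\right\vert-\left\vert S\right\vert$ already appears inside the proof of part \emph{(vii)}. Your separate check of the excluded case $G=K_{2}$ is a sensible (and harmless) addition.
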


There are some known lower bounds on $\partial\left(  G\right)  $
\cite{BermudoFernau2012,Bermudo2014}. Here we give a new one for connected
$1$-well-covered graphs.

\begin{corollary}
\label{cor4}If $G\in\mathbf{W}_{\mathbf{2}}$, then $\partial\left(  G\right)
\geq\left\vert V(G)\right\vert -2\alpha(G)\geq\Delta(G)-1$.
\end{corollary}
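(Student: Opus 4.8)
The statement is a chain of two inequalities, and I would establish each one separately and then concatenate. For the left-hand one, $\partial(G)\ge|V(G)|-2\alpha(G)$, the plan is simply to exhibit a single set attaining that value. Pick any $S\in\Omega(G)$. Since $G\in\mathbf{W}_{2}$ has no isolated vertices and $S$, being maximum, is maximal, every vertex outside $S$ has a neighbour in $S$; combined with $N(S)\cap S=\emptyset$ for an independent set, this gives $N(S)=V(G)\setminus S$. Hence $\partial(G)\ge\partial(S)=|N(S)|-|S|=\bigl(|V(G)|-\alpha(G)\bigr)-\alpha(G)=|V(G)|-2\alpha(G)$. This step is routine; it can also be read off from the monotonicity of $\partial$ in Theorem~\ref{prop11}(vi).

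The right-hand inequality $|V(G)|-2\alpha(G)\ge\Delta(G)-1$, i.e.\ $|V(G)|\ge 2\alpha(G)+\Delta(G)-1$, is the substantive part, and I would prove it by producing that many distinct vertices. Fix $v$ with $\deg(v)=\Delta(G)$. Since $G$ is well-covered, $\{v\}$ extends to some $S\in\Omega(G)$; set $S'=S\setminus\{v\}$, an independent set of size $\alpha(G)-1$ disjoint from $\{v\}$ and, by independence of $S$, disjoint from $N(v)$. Now apply membership in $\mathbf{W}_{2}$ to the disjoint independent sets $\{v\}$ and $S'$: there are disjoint $T_{1},T_{2}\in\Omega(G)$ with $v\in T_{1}$ and $S'\subseteq T_{2}$. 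The sets $T_{1}$, $N(v)$, and $T_{2}\setminus N[v]$ are pairwise disjoint, since $T_{1}\cap N(v)=\emptyset$ (as $v\in T_{1}$ and $T_{1}$ is independent), $T_{1}\cap T_{2}=\emptyset$, and $T_{2}\setminus N[v]$ avoids $N[v]\supseteq N(v)$. Counting them yields
\[
|V(G)|\ \ge\ |T_{1}|+|N(v)|+|T_{2}\setminus N[v]|\ =\ \alpha(G)+\Delta(G)+\bigl(\alpha(G)-|T_{2}\cap N[v]|\bigr).
\]
Because $v\notin T_{2}$ we have $T_{2}\cap N[v]=T_{2}\cap N(v)$, and since $S'\subseteq T_{2}$ misses $N(v)$ while $|T_{2}\setminus S'|=1$, we get $|T_{2}\cap N(v)|\le 1$. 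Substituting gives $|V(G)|\ge 2\alpha(G)+\Delta(G)-1$, and chaining this with the first inequality completes the proof.

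The main obstacle is the right-hand inequality, and the crux is the choice of which pair of disjoint independent sets to feed into the $\mathbf{W}_{2}$ property: taking a maximum independent set through $v$ with $v$ deleted is what forces the companion maximum independent set $T_{2}$ to meet $N(v)$ in at most one vertex, and that is exactly the slack needed to fit a full copy of $N(v)$ alongside a disjoint copy of $T_{1}$. A tidier-looking alternative is to apply Theorem~\ref{prop11}(vi) to $\{v\}\subseteq S$, giving $\Delta(G)-1=\partial(\{v\})\le\partial(S)=|V(G)|-2\alpha(G)$ in one line; but Theorem~\ref{prop11} assumes $G$ connected and $\ne K_{2}$, so that route would additionally require reducing to connected components (observing that each component of a $\mathbf{W}_{2}$ graph is again in $\mathbf{W}_{2}$) and disposing of $K_{2}$ by hand, whereas the vertex-counting argument above goes through uniformly for every $G\in\mathbf{W}_{2}$.
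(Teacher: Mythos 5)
Your proof is correct, and for the substantive half it follows a genuinely different route from the paper. For the first inequality both arguments amount to the same observation: for $S\in\Omega(G)$ one has $N(S)=V(G)-S$, so $\partial(G)\geq\partial(S)=\left\vert V(G)\right\vert -2\alpha(G)$ (the paper dresses this up with the monotonicity of $\partial$ from Theorem \ref{prop11}\emph{(vi)}, which is not really needed for the inequality). For the second inequality the paper simply quotes Corollary \ref{th44} (Staples' bound $\deg(v)\leq\left\vert N(S)\right\vert -\left\vert S\right\vert +1$ for $v\in S$), applied to a vertex of maximum degree inside a maximum independent set; that corollary in turn rests on Theorem \ref{prop11}\emph{(vi)}, whose proof goes through Berge's matching theorem. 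You instead prove $\left\vert V(G)\right\vert \geq 2\alpha(G)+\Delta(G)-1$ from scratch by feeding the disjoint independent sets $\{v\}$ and $S-\{v\}$ into the definition of $\mathbf{W}_{2}$ and counting the pairwise disjoint sets $T_{1}$, $N(v)$, $T_{2}-N[v]$, with the key point that $\left\vert T_{2}\cap N(v)\right\vert \leq 1$ because $T_{2}$ contains the $(\alpha(G)-1)$-set $S-\{v\}$, which misses $N(v)$. Your counting is sound (the degenerate case $\alpha(G)=1$, where $S-\{v\}=\emptyset$, still goes through), and it buys something concrete: it needs neither connectivity nor $G\neq K_{2}$, whereas Theorem \ref{prop11} and Corollary \ref{th44} are stated only for connected graphs, so the paper's proof, read literally, covers the corollary as stated (for arbitrary $G\in\mathbf{W}_{2}$) only after reducing to components, as you note. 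The trade-off is that the paper's derivation is a two-line consequence of machinery it has already built, while yours is self-contained but re-proves a special case of that machinery.
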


\begin{proof}
Clearly, $\partial\left(  G\right)  =\max\left\{  \partial\left(  A\right)
:A\subseteq V(G)\right\}  \geq\max\left\{  \partial\left(  S\right)  :S\text{
is independent}\right\}  $. By Theorem \ref{prop11}\emph{(vi)}, we know that
\begin{align*}
\max\left\{  \partial\left(  S\right)  :S\text{ is independent}\right\}   &
=\max\left\{  \partial\left(  S\right)  :S\in\Omega\left(  G\right)  \right\}
\\
&  =\left\vert V(G)\right\vert -2\alpha(G).
\end{align*}
Finally, taking $v\in V\left(  G\right)  $ with $\deg v=\Delta(G)$ and
$S\in\Omega\left(  G\right)  $ be such that $v\in S$, Corollary \ref{th44}
gives
\[
\Delta(G)=\deg\left(  v\right)  \leq\left\vert N(S)\right\vert -\left\vert
S\right\vert +1=\left\vert V\left(  G\right)  -S\right\vert -\left\vert
S\right\vert +1=\left\vert V(G)\right\vert -2\alpha(G)+1,
\]
as required.
\end{proof}

Notice that there are graphs not in $\mathbf{W}_{\mathbf{2}}$ that enjoy the
conclusions from Corollaries \ref{th44}, \ref{cor4}; e.g., the cycle $C_{9}$,
which is not even well-covered.

Evidently, $G\in$ $\mathbf{W}_{\mathbf{2}}$ if and only if each of its
connected components belongs to $\mathbf{W}_{\mathbf{2}}$. In addition, it is
easy to see that:

\begin{itemize}
\item every graph $G=nK_{2},n\geq1$, is in class $\mathbf{W}_{2}$, and has
exactly $2\alpha(G)$ vertices;

\item each graph $G\in$ $\left\{  C_{5}\cup nK_{2},C_{3}\cup nK_{2}%
:n\geq1\right\}  $ belongs to $\mathbf{W}_{2}$ and has exactly $2\alpha(G)+1$ vertices.
\end{itemize}

\begin{corollary}
\label{cor6}\emph{(i)} $K_{2}$ is the unique connected graph in $\mathbf{W}%
_{2}$ of order $2\alpha(G)$.

\emph{(ii)} $C_{3}$ and $C_{5}$ are the only two connected graphs in
$\mathbf{W}_{2}$ of order $2\alpha(G)+1$.

\emph{(iii)} $K_{2}$ is the only connected bipartite graph belonging to
$\mathbf{W}_{2}$.
\end{corollary}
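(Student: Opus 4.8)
The plan is to prove each of the three parts by combining Theorem \ref{prop11}\emph{(ii)} (which gives the lower bound $\left\vert V(G)\right\vert \geq 2\alpha(G)+1$ for every connected $G\in\mathbf{W}_2$ with $G\neq K_2$) with the two displayed observations immediately preceding the corollary, which supply the sample graphs on the other side.

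\emph{(i)} For one direction, $K_2=1K_2$ is connected, in $\mathbf{W}_2$, and has $|V(K_2)|=2=2\alpha(K_2)$, so such a graph exists. Conversely, suppose $G$ is connected, $G\in\mathbf{W}_2$, and $|V(G)|=2\alpha(G)$. If $G\neq K_2$, then Theorem \ref{prop11}\emph{(ii)} forces $|V(G)|\geq 2\alpha(G)+1$, a contradiction. Hence $G=K_2$.

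\emph{(ii)} The two observations show $C_3=C_3\cup 0K_2$ (reading $n\geq 0$; or directly, $C_3$ is connected, in $\mathbf{W}_2$, with $|V(C_3)|=3=2\cdot 1+1=2\alpha(C_3)+1$) and likewise $C_5$ (with $|V(C_5)|=5=2\cdot 2+1=2\alpha(C_5)+1$) are connected members of $\mathbf{W}_2$ of order $2\alpha+1$. For the converse, let $G$ be connected, $G\in\mathbf{W}_2$, with $|V(G)|=2\alpha(G)+1$. First, $G\neq K_2$ since $|V(K_2)|=2\alpha(K_2)$. Now I would invoke Theorem \ref{prop11}\emph{(iv)}: for $G\neq K_2$ connected in $\mathbf{W}_2$ we have $\alpha(G)\leq\mu(G)$, so $G$ has a matching of size $\alpha(G)$, covering $2\alpha(G)=|V(G)|-1$ vertices; thus exactly one vertex $v$ is unmatched, i.e.\ this matching is near-perfect and $G$ has odd order with no perfect matching. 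The task is then to show that the only such graphs in $\mathbf{W}_2$ are $C_3$ and $C_5$. One route: by Theorem \ref{prop11}\emph{(i)}, pick two disjoint $S_1,S_2\in\Omega(G)$ with $v\notin S_1\cup S_2$; since $|S_1\cup S_2|=2\alpha(G)=|V(G)|-1$, we get $V(G)=S_1\cup S_2\cup\{v\}$, and $G[S_1\cup S_2]$ has a perfect matching $M$ of size $\alpha(G)$ by Theorem \ref{th6}. Using that $v$ has degree $\geq 2$ (Corollary \ref{cor4444}\emph{(ii)}) and that every vertex of $S_i$ has a neighbour in $V(G)-S_i$, together with an Euler/degree count — $2\mu(G)=2\alpha(G)=|V(G)|-1$ combined with the bound $\alpha(G)+\mu(G)\leq|V(G)|-1$ forcing $\alpha(G)\leq 2$ — one nails $\alpha(G)\in\{1,2\}$. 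Indeed $\alpha(G)=\mu(G)$ and $\alpha(G)+\mu(G)=2\alpha(G)\leq|V(G)|-1=2\alpha(G)$, which is consistent, so I instead read off $\alpha(G)\le 2$ from: $|V(G)|=2\alpha(G)+1$ and connectivity with $\Delta\geq 2$ plus Corollary \ref{cor4} giving $\partial(G)\geq|V(G)|-2\alpha(G)=1$ — this alone is not enough, so the cleaner finish is the case analysis $\alpha(G)=1$ gives $|V(G)|=3$, $G=K_3=C_3$; and $\alpha(G)=2$ gives $|V(G)|=5$, and a direct check of connected well-covered graphs on $5$ vertices with $\alpha=2$ in $\mathbf{W}_2$ leaves only $C_5$ (e.g.\ $K_5$ minus a perfect-matching-free set, the bull, the house, etc., all fail to be in $\mathbf{W}_2$, while $\overline{C_5}=C_5$). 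Ruling out $\alpha(G)\ge 3$ is the crux: I would argue that if $\alpha(G)\geq 3$ then $|V(G)|=2\alpha(G)+1\geq 7$, and using Corollary \ref{cor4444}\emph{(i)} one can delete $N[u]$ for a well-chosen $u\in S_1$ to obtain a smaller connected graph in $\mathbf{W}_2$ of order $2(\alpha(G)-1)+1$ (an inductive descent), eventually reaching $C_3$ or $C_5$ — but the descent may disconnect $G$, so the honest proof tracks components and applies the first observation to absorb $K_2$-components, reducing to the base cases.

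\emph{(iii)} If $G$ is connected and bipartite and $G\in\mathbf{W}_2$: bipartite graphs with a well-covered structure are highly restricted. By Theorem \ref{prop11}\emph{(vii)}, any $G\neq K_2$ in $\mathbf{W}_2$ is regularizable and hence, being connected, non-bipartite unless it is bipartite with partite sets of equal size; but Theorem \ref{prop11}\emph{(vii)} also gives $|B|<|N(B)|$ for every independent set $B$, and in a connected bipartite graph with parts $X,Y$ one of $X,Y$ is a maximum independent set $B$ with $N(B)$ contained in the other part, forcing $|B|<|N(B)|\leq\alpha(G)=|B|$, a contradiction — unless the inequality $|B|<|N(B)|$ never gets to bite, which happens precisely when $G=K_2$. (Alternatively: in a connected bipartite $G$, $\alpha(G)\geq|V(G)|/2$, while Theorem \ref{prop11}\emph{(ii)} forces $\alpha(G)\leq(|V(G)|-1)/2$ once $G\neq K_2$ — immediate contradiction.) I expect the main obstacle to be part \emph{(ii)}: everything reduces to classifying connected graphs in $\mathbf{W}_2$ on $2\alpha+1$ vertices, and while the small cases $\alpha\in\{1,2\}$ are quick, cleanly excluding $\alpha\geq 3$ — whether by an induction via $G-N[u]$ that is careful about disconnection, or by a direct structural argument on $V(G)=S_1\cup S_2\cup\{v\}$ — is the step that requires the most care; parts \emph{(i)} and \emph{(iii)} are essentially one-line consequences of Theorem \ref{prop11}\emph{(ii)}.
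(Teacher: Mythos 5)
Parts \emph{(i)} and \emph{(iii)} of your proposal are fine: \emph{(i)} is exactly the paper's argument via Theorem \ref{prop11}\emph{(ii)}, and your ``alternatively'' argument for \emph{(iii)} (in a connected bipartite graph the larger colour class gives $\alpha(G)\geq\left\vert V(G)\right\vert /2$, contradicting $\left\vert V(G)\right\vert \geq2\alpha(G)+1$ unless $G=K_{2}$) is correct and even a little slicker than the paper's, which instead uses the $\mathbf{W}_{2}$ definition to show the two colour classes are themselves disjoint maximum independent sets and then invokes part \emph{(i)}. (Your first argument for \emph{(iii)} is shaky --- in a bipartite graph a colour class need not be a maximum independent set, so ``$\alpha(G)=\left\vert B\right\vert$'' is unjustified --- but the alternative route makes this moot.)

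The genuine gap is the converse of part \emph{(ii)}, and you say so yourself: you never rule out $\alpha(G)\geq3$, and even for $\alpha(G)=2$ you appeal to an unperformed ``direct check'' of all connected $5$-vertex candidates. The matching count from Theorem \ref{prop11}\emph{(iv)} and the decomposition $V(G)=S_{1}\cup S_{2}\cup\left\{ v\right\}$ give no bound on $\alpha(G)$ by themselves, and your proposed descent via $G-N[u]$ is only a sketch whose acknowledged difficulty (disconnection, bookkeeping of $K_{2}$-components) is exactly where the work lies. The missing idea in the paper is a degree bound: applying Corollary \ref{th44} to a maximum independent set containing a vertex of maximum degree (as in the proof of Corollary \ref{cor4}) yields $\Delta(G)\leq\left\vert V(G)\right\vert -2\alpha(G)+1=2$; since $G\in\mathbf{W}_{2}$, $G\neq K_{2}$ has no leaf by Corollary \ref{cor4444}\emph{(ii)}, so $G$ is $2$-regular and connected, i.e.\ a cycle, and among cycles only $C_{3}$ and $C_{5}$ lie in $\mathbf{W}_{2}$ (e.g.\ $C_{4}-v=P_{3}$ and $C_{7}-v=P_{6}$ are not well-covered, cf.\ Theorem \ref{th1}\emph{(i)}). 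This one observation replaces the entire case analysis you were trying to set up; without it, or a completed version of your descent (for instance, summing the bound of Theorem \ref{prop11}\emph{(ii)} over the components of $G-N[u]$ to force $\deg(u)=2$ for every $u$), your part \emph{(ii)} does not close.
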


\begin{proof}
\emph{(i)} On the one hand, according to Theorem \ref{prop11}\emph{(ii)}, we
have that $2\alpha(G)+1\leq\left\vert V(G)\right\vert $, whenever
$G\in\mathbf{W}_{2}$ is connected and $G\neq K_{2}$. On the other hand,
$K_{2}$ belongs to $\mathbf{W}_{\mathbf{2}}$ and $2\alpha(K_{2})=2=\left\vert
V(K_{2})\right\vert $, and hence the conclusion follows.

\emph{(ii)} Let $G$ be a connected graph in $\mathbf{W}_{2}$ of order
$2\alpha(G)+1$. By Corollary \ref{th44}\emph{(ii)}, we have that
$\Delta(G)\leq\left\vert V(G)\right\vert -2\alpha(G)+1=2$.

If $\Delta(G)\leq1$, then $G\in\left\{  K_{1},K_{2}\right\}  $ and this
contradicts $\left\vert V(G)\right\vert =2\alpha(G)+1$.

If $\Delta(G)=2$, then $G\neq$ $K_{2}$ and, according to Corollary
\ref{cor4444}\emph{(ii)}, we infer that the degree of every vertex in $G$ is
equal to $2$. Since $G$ is connected, Theorem \ref{th1}\emph{(i)} implies that
$G\in\left\{  C_{3};C_{5}\right\}  $.

\emph{(iii)} Let $G$ $\in\mathbf{W}_{2}$ be a connected bipartite graph,
having $\left\{  A,B\right\}  $ as its bipartition. Hence, there exist
$S_{1},S_{2}$ disjoint maximum independent sets such that $A\subseteq S_{1}$
and $B\subseteq S_{2}$, because $A,B$ are disjoint and independent. Since
$S_{1}\cap B=\emptyset=S_{2}\cap A$, we infer that $A=S_{1}$ and $B=S_{2}$.
Hence, $\left\vert V\left(  G\right)  \right\vert =\left\vert A\cup
B\right\vert =2\alpha\left(  G\right)  $. Consequently, $G=K_{2}$, because,
otherwise, by Theorem \ref{prop11}\emph{(ii)}, $G$ must have at least
$2\alpha(G)+1$ vertices.
\end{proof}

\section{A way from $\mathbf{W}_{1}$ to $\mathbf{W}_{2}$}

Let $v\in V\left(  G\right)  $. If for every independent set $S$ of
$G-N\left[  v\right]  $, there exists some $u\in N\left(  v\right)  $ such
that $S\cup\left\{  u\right\}  $ is independent, then $v$ is a
\textit{shedding vertex} of $G$ \cite{Woodroofe2009}. Let $Shed\left(
G\right)  $ denote the set of all shedding vertices. For instance,
$Shed\left(  P_{4}\right)  =\left\{  v:\deg(v)=2\right\}  $, $Shed\left(
C_{4}\right)  =Shed\left(  C_{k}\right)  =\emptyset,k\geq6$, while
$Shed\left(  C_{3}\right)  =V\left(  C_{3}\right)  $, $Shed\left(
C_{5}\right)  =V\left(  C_{5}\right)  $. Clearly, no isolated vertex may be a
shedding vertex. On the other hand, every vertex $v\in V\left(  G\right)  $ of
degree $\left\vert V\left(  G\right)  \right\vert -1$ is a shedding vertex.

Let us define $\varepsilon_{G}:\mathrm{Ind}(G)\longrightarrow%
\mathbb{N}
$ as $\varepsilon_{G}\left(  A\right)  =\max\left\{  \left\vert S\right\vert
:A\subseteq S\text{ and }S\in\mathrm{Ind}(G)\right\}  $. Informally, one can
interpret the function $\varepsilon_{G}$ as the strength of enlargement of
independent sets. The following basic properties of the function
$\varepsilon_{G}$ are clear.

\begin{lemma}
\label{lem2}It is true for every graph $G$ that:

\emph{(i)} if $A\in\mathrm{Ind}(G)$, then $\left\vert A\right\vert
\leq\varepsilon_{G}\left(  A\right)  \leq\alpha\left(  G\right)  $;

\emph{(ii)} if $A\subseteq B\in\mathrm{Ind}(G)$, then $\varepsilon_{G}\left(
A\right)  \geq\varepsilon_{G}\left(  B\right)  $;

\emph{(iii)} if $H$ is an induced subgraph of $G$, then $\varepsilon
_{G}\left(  A\right)  \geq\varepsilon_{H}\left(  A\right)  $ for each
$A\in\mathrm{Ind}(H)$;

\emph{(iv)} $G$ is well-covered if and only if $\varepsilon_{G}\left(
A\right)  =\alpha\left(  G\right)  $ for every $A\in\mathrm{Ind}(G)$.
\end{lemma}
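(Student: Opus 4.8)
# Proof Proposal for Lemma \ref{lem2}

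The plan is to verify the four statements in sequence, since each is a routine consequence of the definition of $\varepsilon_{G}$ together with the elementary fact that an independent set of $G$ restricted to (or extended within) an induced subgraph remains independent.

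For \emph{(i)}, observe that $A$ itself is an independent set containing $A$, so $\left\vert A\right\vert$ is among the sizes over which the maximum defining $\varepsilon_{G}(A)$ is taken, giving $\left\vert A\right\vert \leq \varepsilon_{G}(A)$; on the other hand every $S \in \mathrm{Ind}(G)$ satisfies $\left\vert S\right\vert \leq \alpha(G)$ by definition of $\alpha$, so the maximum is at most $\alpha(G)$. For \emph{(ii)}, if $A \subseteq B$, then every independent set $S$ with $B \subseteq S$ also satisfies $A \subseteq S$; hence the set of candidates in the maximum defining $\varepsilon_{G}(A)$ contains the set of candidates defining $\varepsilon_{G}(B)$, and the maximum over a larger collection is no smaller, yielding $\varepsilon_{G}(A) \geq \varepsilon_{G}(B)$. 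For \emph{(iii)}, let $H$ be an induced subgraph of $G$ and $A \in \mathrm{Ind}(H)$. Pick $S \in \mathrm{Ind}(H)$ with $A \subseteq S$ and $\left\vert S\right\vert = \varepsilon_{H}(A)$; since $H$ is \emph{induced}, $S$ is independent in $G$ as well, and it still contains $A$, so $\varepsilon_{G}(A) \geq \left\vert S\right\vert = \varepsilon_{H}(A)$.

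For \emph{(iv)}, recall (from the discussion preceding Lemma \ref{lem1}) that $G$ is well-covered if and only if every independent set is contained in a maximum independent set. If $G$ is well-covered and $A \in \mathrm{Ind}(G)$, then $A$ extends to some $S \in \Omega(G)$, so $\varepsilon_{G}(A) \geq \left\vert S\right\vert = \alpha(G)$, and combined with \emph{(i)} this forces $\varepsilon_{G}(A) = \alpha(G)$. Conversely, if $\varepsilon_{G}(A) = \alpha(G)$ for every $A \in \mathrm{Ind}(G)$, then in particular every maximal independent set $A$ has some extension $S \in \mathrm{Ind}(G)$ with $\left\vert S\right\vert = \alpha(G)$ and $A \subseteq S$; but maximality of $A$ forces $S = A$, so $\left\vert A\right\vert = \alpha(G)$, meaning all maximal independent sets have size $\alpha(G)$ and $G$ is well-covered.

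There is essentially no obstacle here: the only point requiring a moment's care is the use of the word \emph{induced} in \emph{(iii)} — the inequality can fail for non-induced subgraphs, since a set independent in a spanning subgraph need not be independent in $G$ — and the passage in \emph{(iv)} from "$\varepsilon_{G}(A) = \alpha(G)$ for all $A$" back to well-coveredness, where one must test the hypothesis against \emph{maximal} independent sets and invoke maximality to collapse the extension. Everything else is a direct unwinding of the definitions.
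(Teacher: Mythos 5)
Your proof is correct: all four parts are exactly the routine definitional verifications (including the careful use of inducedness in \emph{(iii)} and the maximality argument in the converse of \emph{(iv)}), and the paper itself states Lemma \ref{lem2} without proof, calling these properties ``clear.'' So your argument simply fills in the intended, straightforward reasoning; there is nothing to correct.
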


There is a natural connection between shedding vertices and the function
$\varepsilon_{G}$.

\begin{theorem}
\label{th10}Let $v\in$ $V\left(  G\right)  $. Then $v\in Shed\left(  G\right)
$ if and only if $\varepsilon_{G-v}\left(  A\right)  =\varepsilon_{G}\left(
A\right)  $ for each $A\in\mathrm{Ind}(G-v)$.
\end{theorem}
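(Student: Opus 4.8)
The plan is to exploit that one of the two inequalities is automatic. For any $A\in\mathrm{Ind}(G-v)$, since $G-v$ is an induced subgraph of $G$, Lemma \ref{lem2}\emph{(iii)} already gives $\varepsilon_{G-v}(A)\leq\varepsilon_{G}(A)$, so in both directions it only remains to control the reverse inequality $\varepsilon_{G-v}(A)\geq\varepsilon_{G}(A)$. I would also record at the outset the routine reduction that $v$ is a shedding vertex if and only if every \emph{maximal} independent set $S$ of $G-N[v]$ admits some $u\in N(v)$ with $S\cup\{u\}\in\mathrm{Ind}(G)$: enlarge an arbitrary independent set of $G-N[v]$ to a maximal one, apply the condition, and shrink back.

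For the forward implication I would assume $v\in Shed\left(G\right)$ and fix $A\in\mathrm{Ind}(G-v)$. Choose $S\in\mathrm{Ind}(G)$ with $A\subseteq S$ and $\left\vert S\right\vert =\varepsilon_{G}(A)$. If $v\notin S$, then $S\in\mathrm{Ind}(G-v)$ already witnesses $\varepsilon_{G-v}(A)\geq\left\vert S\right\vert =\varepsilon_{G}(A)$. If $v\in S$, set $S_{0}=S\setminus\{v\}$; then $S_{0}$ is independent, contains $A$ (because $v\notin A$), and is disjoint from $N(v)$, hence $S_{0}\in\mathrm{Ind}(G-N[v])$. The shedding property supplies $u\in N(v)$ with $S_{0}\cup\{u\}\in\mathrm{Ind}(G)$; this set lies in $G-v$, contains $A$, and has cardinality $\left\vert S_{0}\right\vert +1=\left\vert S\right\vert =\varepsilon_{G}(A)$ (note $u\notin S_{0}$ since $S_{0}\cap N(v)=\emptyset$ and $u\neq v$). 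In either case $\varepsilon_{G-v}(A)\geq\varepsilon_{G}(A)$, and equality follows.

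For the converse I would assume $\varepsilon_{G-v}(A)=\varepsilon_{G}(A)$ for every $A\in\mathrm{Ind}(G-v)$ and let $S$ be a maximal independent set of $G-N[v]$. Since $S\cap N(v)=\emptyset$, the set $S\cup\{v\}$ is independent in $G$, so $\varepsilon_{G}(S)\geq\left\vert S\right\vert +1$; by hypothesis $\varepsilon_{G-v}(S)\geq\left\vert S\right\vert +1$, hence there is $T\in\mathrm{Ind}(G-v)$ with $S\subsetneq T$. If $T\subseteq V(G)-N[v]$, then $T$ is an independent set of $G-N[v]$ properly containing $S$, contradicting maximality; therefore $T\cap N[v]\neq\emptyset$, and since $v\notin T$ we obtain some $u\in T\cap N(v)$. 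As $u\notin S$, the set $S\cup\{u\}\subseteq T$ is independent in $G$, which is exactly what the shedding condition demands; so $v\in Shed\left(G\right)$.

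The whole argument is essentially a matter of unwinding the two definitions against Lemma \ref{lem2}; the one step that needs genuine care is the converse, where it is not enough to produce a larger independent set $T$ — one must locate inside $T$ a vertex of $N(v)$. This is precisely where the maximality of $S$ in $G-N[v]$ is used, forcing $T$ to meet $N(v)$, and I expect this to be the only real obstacle.
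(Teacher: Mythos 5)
Your proof is correct and takes essentially the same route as the paper's: the inequality $\varepsilon_{G-v}\left(  A\right)  \leq\varepsilon_{G}\left(  A\right)$ is automatic, the shedding property is used to trade $v$ for a neighbour inside a maximum enlargement, and the equality hypothesis is used to force an enlargement of a set avoiding $N\left[  v\right]$ to meet $N\left(  v\right)$. The only cosmetic difference is in the converse, where you work with a maximal independent set of $G-N\left[  v\right]$ together with your (valid) reduction to maximal sets, whereas the paper argues directly for an arbitrary independent set of $G-N\left[  v\right]$ by taking a maximum-size enlargement within $G-v$ and noting that if it missed $N\left(  v\right)$ one could add $v$ itself.
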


\begin{proof}
\textquotedblleft\textit{If\textquotedblright} Suppose that $\varepsilon
_{G-v}\left(  A\right)  =\varepsilon_{G}\left(  A\right)  $ for each
$A\in\mathrm{Ind}(G-v)$.

Assume that $B$ is an independent set of $G-N\left[  v\right]  $. Let us
choose an independent $S$ of size $\varepsilon_{G-v}\left(  B\right)  $ such
that $B\subseteq S\subseteq V\left(  G\right)  -v$. It follows that $S\cap
N\left(  v\right)  \neq\emptyset$, otherwise $S\cup\left\{  v\right\}  $ is
independent and $\varepsilon_{G-v}\left(  B\right)  =\left\vert S\right\vert
<\left\vert S\right\vert +1=\varepsilon_{G}\left(  B\right)  $ in
contradiction with the hypothesis. Finally, we conclude that there exists a
vertex $u\in S\cap N\left(  v\right)  \subseteq N\left(  v\right)  $ such that
$B\cup\left\{  u\right\}  $ is an independent set. Thus $v\in Shed\left(
G\right)  $, as claimed.

\textquotedblleft\textit{Only if\textquotedblright} Let $v$ be a shedding
vertex and $A$ be an independent set in $G-v$. By Lemma \ref{lem2}%
\emph{(iii)}, it is enough to prove that $\varepsilon_{G-v}\left(  A\right)
\geq\varepsilon_{G}\left(  A\right)  $.

\qquad\textit{Case 1}. $A\cap N\left(  v\right)  \neq\emptyset$. It implies
that no independent set in $G$ including $A$ contains $v$. Thus $\varepsilon
_{G-v}\left(  A\right)  =\varepsilon_{G}\left(  A\right)  $.

\qquad\textit{Case 2}. $A\cap N\left(  v\right)  =\emptyset$. It means that
$A$ $\subseteq V\left(  G\right)  -N_{G}\left[  v\right]  $. The structure of
a maximum independent enlargement of $A$ in $G$ is a union of $A$, an
independent subset $B\subseteq V\left(  G\right)  -N_{G}\left[  v\right]  $,
and a vertex, say $x$, belonging to $N_{G}\left[  v\right]  $. If $x\neq v$,
then $\varepsilon_{G-v}\left(  A\right)  =\varepsilon_{G}\left(  A\right)  $.
Otherwise, $\left\vert A\cup B\cup\left\{  v\right\}  \right\vert
=\varepsilon_{G}\left(  A\right)  $. Since $A\cup B\subseteq V\left(
G\right)  -N_{G}\left[  v\right]  $ is independent and $v$ is shedding, we
conclude that there exists $w\in N_{G}\left(  v\right)  $ such that $A\cup
B\cup\left\{  w\right\}  $ is independent in $G-v$. Hence,
\[
\varepsilon_{G-v}\left(  A\right)  \geq\left\vert A\cup B\cup\left\{
w\right\}  \right\vert =\left\vert A\cup B\cup\left\{  v\right\}  \right\vert
=\varepsilon_{G}\left(  A\right)  \text{,}%
\]
which completes the proof.
\end{proof}

In well-covered graphs shedding vertices may be characterized in more specific manner.

\begin{corollary}
\label{th3}Let $v$ be a non-isolated vertex of a well-covered graph $G$. Then
$v\in Shed\left(  G\right)  $ if and only if $G-v$ is well-covered.
\end{corollary}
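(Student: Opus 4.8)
The plan is to derive Corollary~\ref{th3} as an almost immediate consequence of Theorem~\ref{th10} together with the characterization of well-coveredness through $\varepsilon_G$ given in Lemma~\ref{lem2}\emph{(iv)}. Throughout, $v$ is a fixed non-isolated vertex of the well-covered graph $G$, so by Lemma~\ref{lem1} we have $\alpha(G-v)=\alpha(G)$, a fact I will use on both directions.

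For the \textquotedblleft only if\textquotedblright\ direction, suppose $v\in Shed(G)$. Take any $A\in\mathrm{Ind}(G-v)$. Since $A\in\mathrm{Ind}(G)$ and $G$ is well-covered, Lemma~\ref{lem2}\emph{(iv)} gives $\varepsilon_G(A)=\alpha(G)$. By Theorem~\ref{th10}, shedding of $v$ yields $\varepsilon_{G-v}(A)=\varepsilon_G(A)=\alpha(G)=\alpha(G-v)$. As $A$ was arbitrary in $\mathrm{Ind}(G-v)$, Lemma~\ref{lem2}\emph{(iv)} (applied to $G-v$) tells us $G-v$ is well-covered.

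For the \textquotedblleft if\textquotedblright\ direction, suppose $G-v$ is well-covered. I must show $\varepsilon_{G-v}(A)=\varepsilon_G(A)$ for every $A\in\mathrm{Ind}(G-v)$ and then invoke Theorem~\ref{th10}. Since $G-v$ is well-covered, $\varepsilon_{G-v}(A)=\alpha(G-v)=\alpha(G)$; and since $G$ is well-covered, $\varepsilon_G(A)=\alpha(G)$ as well (here $A\in\mathrm{Ind}(G)$). Hence the two quantities coincide for every such $A$, and Theorem~\ref{th10} concludes that $v\in Shed(G)$.

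The argument is genuinely short; the only point requiring a little care is that $G-v$ is being used as a graph to which Lemma~\ref{lem2}\emph{(iv)} applies, which is fine since that lemma holds for every graph, and that $\alpha(G-v)=\alpha(G)$ must be in place before comparing the enlargement functions — this is exactly where non-isolatedness of $v$ and Lemma~\ref{lem1} enter. So the \textbf{main obstacle} is not really an obstacle: it is simply making sure the $\alpha(G-v)=\alpha(G)$ normalization is invoked so that \textquotedblleft$\varepsilon_{G-v}(A)=\alpha(G-v)$\textquotedblright\ can be matched against \textquotedblleft$\varepsilon_G(A)=\alpha(G)$\textquotedblright. No case analysis on the structure of $A$ relative to $N(v)$ is needed, because all of that work has already been absorbed into Theorem~\ref{th10}.
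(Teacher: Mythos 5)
Your argument is correct and is essentially identical to the paper's own proof: both directions combine Lemma \ref{lem1} (to get $\alpha(G-v)=\alpha(G)$), Lemma \ref{lem2}\emph{(iv)} (well-coveredness as $\varepsilon\equiv\alpha$), and Theorem \ref{th10} in exactly the way the paper does. Nothing is missing; your write-up just spells out pointwise what the paper abbreviates with the $\equiv$ notation.
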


\begin{proof}
Lemma \ref{lem1} claims that $\alpha\left(  G\right)  =\alpha\left(
G-v\right)  $. In accordance with Lemma \ref{lem2}\emph{(iv)}, $G$ is
well-covered if and only if $\varepsilon_{G}%
\equiv
\alpha\left(  G\right)  $.

\textquotedblleft\textit{If\textquotedblright} Suppose $G-v$ is well-covered.
Then
\[
\varepsilon_{G-v}%
\equiv
\alpha\left(  G-v\right)  =\alpha\left(  G\right)
\equiv
\varepsilon_{G}%
\]
Consequently, by Theorem \ref{th10}, $v\in Shed\left(  G\right)  $.

\textquotedblleft\textit{Only if\textquotedblright} Let $v\in Shed\left(
G\right)  $. By Theorem \ref{th10},%
\[
\varepsilon_{G-v}%
\equiv
\varepsilon_{G}%
\equiv
\alpha\left(  G\right)  =\alpha\left(  G-v\right)
\]
In conclusion, $G-v$ is well-covered.
\end{proof}

Notice that $P_{3}$ is not a well-covered graph, while $P_{3}-v$ is
well-covered, for each $v\in V(P_{3})$, while $\left\vert Shed\left(
P_{3}\right)  \right\vert =1$.

\begin{corollary}
\label{cor7}Let $G$ be a well-covered graph and $v\in V(G)$ is non-isolated.
The following conditions are equivalent:

\emph{(i)} $G-v$ is well-covered;

\emph{(ii)} $\left\vert N_{G}(v)-N_{G}(S)\right\vert \geq1$ for every
independent set $S$ of $G-N_{G}[v]$;

\emph{(iii)} there is no independent set $S\subseteq V(G)-N_{G}[v]$ such that
$v$ is isolated in $G-N_{G}[S]$;

\emph{(iv)} $v$ is a shedding vertex.
\end{corollary}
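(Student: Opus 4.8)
The plan is to prove Corollary~\ref{cor7} by first observing that (i) $\Leftrightarrow$ (iv) is immediate from Corollary~\ref{th3}, since $G$ is well-covered and $v$ is non-isolated; this lets me concentrate on tying (ii) and (iii) into the chain. I would then unpack the definition of a shedding vertex directly: $v \in Shed(G)$ means that for every independent set $S$ of $G - N_G[v]$, there is some $u \in N_G(v)$ with $S \cup \{u\}$ independent. The key translation is that ``$S \cup \{u\}$ is independent for some $u \in N_G(v)$'' is equivalent to ``$N_G(v) \not\subseteq N_G(S)$'', i.e., to $|N_G(v) - N_G(S)| \geq 1$ (using that $S$ is independent and disjoint from $N_G[v]$, so no vertex of $N_G(v)$ lies in $S$ itself). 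That makes (ii) $\Leftrightarrow$ (iv) essentially a restatement.

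For (ii) $\Leftrightarrow$ (iii), I would argue that $v$ is isolated in $G - N_G[S]$ precisely when every neighbor of $v$ has been deleted, i.e., when $N_G(v) \subseteq N_G[S] = N_G(S) \cup S$; and since $S \subseteq V(G) - N_G[v]$ forces $N_G(v) \cap S = \emptyset$, this condition collapses to $N_G(v) \subseteq N_G(S)$. Hence ``there exists an independent $S \subseteq V(G) - N_G[v]$ with $v$ isolated in $G - N_G[S]$'' is exactly the negation of (ii). So the equivalences (ii) $\Leftrightarrow$ (iii) $\Leftrightarrow$ (iv) are all bookkeeping on the same inclusion $N_G(v) \subseteq N_G(S)$, and (iv) $\Leftrightarrow$ (i) is Corollary~\ref{th3}.

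The cleanest write-up would be: prove (i) $\Leftrightarrow$ (iv) by citing Corollary~\ref{th3}; then prove (iv) $\Rightarrow$ (ii) $\Rightarrow$ (iii) $\Rightarrow$ (iv) as a short cycle, each arrow being a one-line manipulation of the set inclusion above. I do not anticipate a genuine obstacle here — the only thing to be careful about is keeping track of $N_G[v]$ versus $N_G(v)$ and making sure each reformulation correctly uses the hypothesis $S \cap N_G[v] = \emptyset$ (so that a vertex of $N_G(v)$ is ``available'' to extend $S$ iff it is not already dominated by $S$). A minor point worth a sentence is that one must remember $v$ itself is not in $G - N_G[v]$, so the extending vertex is genuinely sought in $N_G(v)$, not in $N_G[v]$.

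\begin{proof}
By Corollary~\ref{th3}, conditions \emph{(i)} and \emph{(iv)} are equivalent, since $G$ is well-covered and $v$ is non-isolated. It remains to show \emph{(iv)} $\Rightarrow$ \emph{(ii)} $\Rightarrow$ \emph{(iii)} $\Rightarrow$ \emph{(iv)}.

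Throughout, fix an independent set $S \subseteq V(G) - N_G[v]$; in particular $N_G(v) \cap S = \emptyset$. For such an $S$, a vertex $u \in N_G(v)$ satisfies ``$S \cup \{u\}$ is independent'' if and only if $u \notin N_G(S)$, that is, if and only if $u \in N_G(v) - N_G(S)$.

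\emph{(iv)} $\Rightarrow$ \emph{(ii)}: If $v$ is a shedding vertex, then for every independent $S \subseteq V(G) - N_G[v]$ there is some $u \in N_G(v)$ with $S \cup \{u\}$ independent, hence $u \in N_G(v) - N_G(S)$ and $|N_G(v) - N_G(S)| \geq 1$.

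\emph{(ii)} $\Rightarrow$ \emph{(iii)}: Suppose \emph{(ii)} holds and assume, to the contrary, that there is an independent set $S \subseteq V(G) - N_G[v]$ with $v$ isolated in $G - N_G[S]$. Being isolated there means every neighbor of $v$ lies in $N_G[S] = N_G(S) \cup S$; since $N_G(v) \cap S = \emptyset$, we get $N_G(v) \subseteq N_G(S)$, so $N_G(v) - N_G(S) = \emptyset$, contradicting \emph{(ii)}.

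\emph{(iii)} $\Rightarrow$ \emph{(iv)}: Let $S$ be any independent set of $G - N_G[v]$. By \emph{(iii)}, $v$ is not isolated in $G - N_G[S]$, so some neighbor of $v$ survives, i.e., $N_G(v) \not\subseteq N_G[S]$. As $N_G(v) \cap S = \emptyset$, this gives a vertex $u \in N_G(v) - N_G(S)$, whence $S \cup \{u\}$ is independent. Therefore $v \in Shed(G)$.
\end{proof}
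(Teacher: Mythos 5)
Your argument is correct, but it is organized differently from the paper's. The paper disposes of this corollary almost entirely by citation: the equivalences \emph{(i)} $\Leftrightarrow$ \emph{(ii)} $\Leftrightarrow$ \emph{(iii)} are attributed to \cite{FinHarNow}, \emph{(ii)} $\Leftrightarrow$ \emph{(iv)} to \cite{CaCrRey2016}, and Corollary \ref{th3} is only mentioned as giving an alternative proof of \emph{(i)} $\Leftrightarrow$ \emph{(iv)}. You instead make the whole statement self-contained: you use Corollary \ref{th3} exactly where the paper does (this is the only place the well-coveredness and non-isolation hypotheses are needed), and then you verify \emph{(iv)} $\Rightarrow$ \emph{(ii)} $\Rightarrow$ \emph{(iii)} $\Rightarrow$ \emph{(iv)} by direct bookkeeping on the inclusion $N_G(v)\subseteq N_G(S)$, correctly exploiting that $S\cap N_G[v]=\emptyset$ forces $N_G(v)\cap S=\emptyset$ (so availability of an extending vertex $u\in N_G(v)$ is the same as $u\notin N_G(S)$, and isolation of $v$ in $G-N_G[S]$ collapses to $N_G(v)\subseteq N_G(S)$). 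What your route buys is independence from the two external references and the explicit observation that \emph{(ii)}, \emph{(iii)}, \emph{(iv)} are equivalent in an arbitrary graph, with well-coveredness entering only through Corollary \ref{th3}; what the paper's version buys is brevity and an explicit pointer to where these characterizations first appeared. No gaps in your argument.
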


\begin{proof}
The equivalences \emph{(i)} $\Leftrightarrow$ \emph{(ii) }$\Leftrightarrow$
\emph{(iii)} were established in \cite{FinHarNow}, while \emph{(ii)}
$\Leftrightarrow$ \emph{(iv)} appears in \cite{CaCrRey2016}. Corollary
\ref{th3} gives an alternative proof for \emph{(i)} $\Leftrightarrow$
\emph{(iv)}.
\end{proof}

A vertex $v$ of a graph $G$ is \textit{simplicial} if the induced subgraph of
$G$ on the set $N[v]$ is a complete graph and this complete graph is called a
simplex of $G$. Clearly, every leaf is a simplicial vertex. Let $Simp\left(
G\right)  $ denote the set of all simplicial vertices. For instance, if
$n\geq4$, then $Simp\left(  C_{n}\right)  =\emptyset$, while $Simp\left(
K_{n}\right)  =V\left(  K_{n}\right)  $. A graph $G$ is said to be
\textit{simplicial} if every vertex of $G$ belongs to a simplex of $G$. For
example, $P_{n}$ is simplicial only for $n\leq4$.

\begin{theorem}
\cite{PrToppVest1996}\label{th2}\ A graph $G$ is simplicial and well-covered
if and only if every vertex of $G$ belongs to exactly one simplex.
\end{theorem}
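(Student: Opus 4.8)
The plan is to prove both directions of the equivalence by carefully analyzing the relationship between simplices and the maximal independent sets of $G$. For the forward direction, assume $G$ is simplicial and well-covered; I want to show that no vertex can belong to two distinct simplices. Suppose for contradiction that some vertex $v$ lies in two simplices $Q_{1}$ and $Q_{2}$ with $Q_{1}\neq Q_{2}$. Since $G$ is simplicial, every vertex lies in at least one simplex, so I can build a maximal independent set $S$ greedily by repeatedly picking a simplicial vertex from an uncovered simplex; in a simplicial graph one shows that $\alpha(G)$ equals the minimum number of simplices needed to cover $V(G)$, and a maximal independent set meeting this bound picks exactly one vertex per simplex in a minimum simplex cover. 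The point is that if $v$ sits in two simplices, I can construct two maximal independent sets of different sizes — one that "wastes" a vertex by using $v$ to cover both $Q_{1}$ and $Q_{2}$ simultaneously (hence needing fewer chosen vertices elsewhere is impossible, so it is forced to be larger), contradicting well-coveredness.

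More concretely for the forward direction: pick a simplicial vertex $w\in Q_{1}$ with $w\neq v$ (it exists since a simplex on at least two vertices — if $Q_{1}=\{v\}$ then $v$ is isolated and $Q_{2}$ would have to equal $\{v\}$ too) and note $w$ is simplicial with $N[w]=Q_{1}$. Then $\{v\}$ and $\{w\}$ are both maximal-extendable, but using a standard argument one extends $\{v\}$ to a maximal independent set $S_{v}$ and $\{w\}$ to a maximal independent set $S_{w}$ in a way that forces $|S_{w}|>|S_{v}|$, because $v$ simultaneously dominates the two distinct cliques $Q_{1},Q_{2}$ while $w$ only dominates $Q_{1}$. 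The cleanest route is: show that in any simplicial graph a maximal independent set has size equal to the number of simplices it "hits," each simplex hit exactly once when the set has maximum size; so having a vertex in two simplices lets you produce a maximal independent set using $v$ that hits strictly fewer than $\alpha(G)$ simplices, contradicting well-coveredness.

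For the converse, assume every vertex of $G$ lies in exactly one simplex. Then $G$ is simplicial by hypothesis, so it remains to prove well-coveredness. Let $\{Q_{1},\dots,Q_{k}\}$ be the collection of simplices; the "exactly one" hypothesis means these cliques partition $V(G)$ into the closed-neighborhood classes, and any independent set can contain at most one vertex from each $Q_{i}$ (since each $Q_{i}$ is a clique). Conversely, a maximal independent set $S$ must contain at least one vertex from each $Q_{i}$: if $S\cap Q_{i}=\emptyset$, pick the simplicial vertex $v_{i}$ generating $Q_{i}$, i.e.\ $N[v_{i}]=Q_{i}$; then $S$ contains no neighbor of $v_{i}$, so $S\cup\{v_{i}\}$ is independent, contradicting maximality. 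Hence every maximal independent set has size exactly $k$, so $G$ is well-covered with $\alpha(G)=k$.

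The main obstacle is the forward direction, specifically the precise construction showing that a vertex in two simplices forces two maximal independent sets of different sizes. The subtlety is that $v$ dominating two cliques does not by itself prevent the rest of the graph from being extended in a balanced way — I need to leverage the global structure. I expect the right move is to first establish the lemma that in a simplicial graph, for a maximal independent set $S$, each generating simplicial vertex $v_{i}$ has $S\cap N[v_{i}]\neq\emptyset$, and then count: a maximum maximal independent set must select from $\alpha(G)$ "independent" simplices. If $v$ lies in $Q_{1}\cap Q_{2}$, then choosing $v$ into an independent set $S$ covers both the domination requirements of $Q_{1}$ and $Q_{2}$ with one vertex, so one can complete $S$ to a maximal independent set of size strictly less than what is achievable by choosing $w\in Q_{1}\setminus\{v\}$ and a separate vertex for $Q_{2}$; this size discrepancy is the contradiction with well-coveredness, and pinning down why the "separate" choice genuinely yields a larger maximal set (rather than merely a different one) is where the careful argument is needed.
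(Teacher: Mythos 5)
The paper does not prove this theorem at all -- it is quoted from Prisner, Topp and Vestergaard \cite{PrToppVest1996} -- so your argument has to stand on its own. Your converse direction does: if every vertex lies in exactly one simplex, the simplices $Q_{1},\dots,Q_{k}$ partition $V(G)$ into cliques, an independent set meets each $Q_{i}$ at most once, and a maximal one meets each $Q_{i}=N[v_{i}]$ (with $v_{i}$ simplicial) at least once, since otherwise $S\cup\{v_{i}\}$ would still be independent; hence every maximal independent set has size $k$ and $G$ is well-covered. That half is complete and correct.

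The forward direction, however, contains a genuine gap, and you flag it yourself: the step showing that a vertex $v$ lying in two simplices forces maximal independent sets of different sizes is never carried out, and your sketch even wavers on the direction of the inequality (in one place the set built through $v$ is said to be \emph{larger}, later it hits \emph{strictly fewer} simplices). The missing ingredients are two. First, prove that if $x$ is simplicial and $y\in N[x]$ is simplicial, then $N[x]=N[y]$ (since $N[x]$ is a clique containing $y$, $N[x]\subseteq N[y]$, and symmetrically); consequently the generators $x_{1},\dots,x_{k}$ of the distinct simplices are pairwise non-adjacent, so $W=\{x_{1},\dots,x_{k}\}$ is independent, and it is dominating because the simplices cover $V(G)$; thus $W$ is a maximal independent set of size $k$, and well-coveredness forces \emph{every} maximal independent set to have size exactly $k$. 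Second, the counting: extend $\{v\}$, with $v\in Q_{1}\cap Q_{2}$ and $Q_{1}\neq Q_{2}$, to a maximal independent set $S$; each $Q_{i}$ is a clique, so $\left\vert S\cap Q_{i}\right\vert \leq1$, and maximality gives $S\cap Q_{i}\neq\emptyset$ (else $S\cup\{x_{i}\}$ is independent), so $Q_{i}\mapsto$ the unique vertex of $S\cap Q_{i}$ is a well-defined map onto $S$ (onto because $G$ is simplicial, so every vertex of $S$ lies in some simplex) which sends both $Q_{1}$ and $Q_{2}$ to $v$; a non-injective surjection from a $k$-set onto $S$ gives $\left\vert S\right\vert \leq k-1<k$, the desired contradiction. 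The auxiliary claims you lean on instead (that $\alpha(G)$ equals the minimum number of simplices covering $V(G)$, that a maximum maximal independent set hits each simplex of a minimum cover exactly once) are themselves unproved and become unnecessary once the two facts above are in place.
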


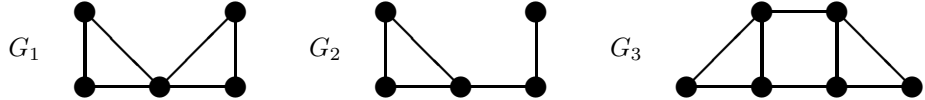
\begin{figure}[h]
\setlength{\unitlength}{1cm}\begin{picture}(5,1.15)\thicklines
\multiput(1.5,0)(1,0){3}{\circle*{0.29}}
\multiput(1.5,1)(2,0){2}{\circle*{0.29}}
\put(1.5,0){\line(1,0){2}}
\put(1.5,1){\line(1,-1){1}}
\put(1.5,0){\line(0,1){1}}
\put(2.5,0){\line(1,1){1}}
\put(3.5,0){\line(0,1){1}}
\put(0.7,0.5){\makebox(0,0){$G_{1}$}}
\multiput(5.5,0)(1,0){3}{\circle*{0.29}}
\multiput(5.5,1)(2,0){2}{\circle*{0.29}}
\put(5.5,0){\line(1,0){2}}
\put(5.5,0){\line(0,1){1}}
\put(5.5,1){\line(1,-1){1}}
\put(7.5,0){\line(0,1){1}}
\put(4.7,0.5){\makebox(0,0){$G_{2}$}}
\multiput(9.5,0)(1,0){4}{\circle*{0.29}}
\multiput(10.5,1)(1,0){2}{\circle*{0.29}}
\put(9.5,0){\line(1,0){3}}
\put(9.5,0){\line(1,1){1}}
\put(10.5,0){\line(0,1){1}}
\put(10.5,1){\line(1,0){1}}
\put(11.5,0){\line(0,1){1}}
\put(11.5,1){\line(1,-1){1}}
\put(8.7,0.5){\makebox(0,0){$G_{3}$}}
\end{picture}\caption{Simplicial graphs. Only $G_{1}$ is not well-covered.
$G_{3}$ is in $\mathbf{W}_{2}$.}%
\label{fig3}%
\end{figure}

\begin{proposition}
\label{prop1}\cite{Woodroofe2009} If $v\in Simp\left(  G\right)  $, then
$N\left(  v\right)  \subseteq Shed\left(  G\right)  $.
\end{proposition}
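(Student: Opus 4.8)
The plan is to verify the shedding condition directly for an arbitrary neighbour $u$ of $v$, using $v$ itself as the witness vertex. Fix $u\in N(v)$. By definition, $u\in Shed(G)$ means that for every independent set $S$ of $G-N[u]$ there is some $w\in N(u)$ with $S\cup\{w\}$ independent. So I would let $S$ be an arbitrary independent set of $G-N[u]$, and the goal becomes to show that $S\cup\{v\}$ is independent, which suffices since $v\in N(u)$.

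The key structural observation is that $N[v]\subseteq N[u]$ whenever $v$ is simplicial and $u\in N(v)$. Indeed, $G[N[v]]$ is a complete graph, and both $u$ and any $x\in N[v]-\{u\}$ lie in $N[v]$, so $x$ is adjacent to $u$, i.e. $x\in N(u)\subseteq N[u]$; together with $u\in N[u]$ this yields $N[v]\subseteq N[u]$.

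Now I would combine the two facts. Since $S\subseteq V(G)-N[u]$ and $N[v]\subseteq N[u]$, we get $S\cap N[v]=\emptyset$. In particular $v\notin S$ and no vertex of $S$ is a neighbour of $v$, so $S\cup\{v\}$ is independent. As $v\in N(u)$, the shedding condition is satisfied at $u$; and since $u\in N(v)$ was arbitrary, $N(v)\subseteq Shed(G)$.

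There is essentially no obstacle here: the whole content is the containment $N[v]\subseteq N[u]$, after which $v$ serves as a uniform witness neighbour for every $u\in N(v)$ and every independent set $S$ of $G-N[u]$. If one prefers, the same conclusion can be reached through Theorem \ref{th10} (showing $\varepsilon_{(G-u)}(A)=\varepsilon_{G}(A)$ because any independent enlargement of $A$ avoiding $u$ can be adjusted to pass through $v$), but the direct argument above is the shortest route.
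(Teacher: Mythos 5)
Your proof is correct: the containment $N[v]\subseteq N[u]$ for $u\in N(v)$ follows immediately from $G[N[v]]$ being complete, and then $v$ itself serves as the required neighbour of $u$ for every independent set $S\subseteq V(G)-N[u]$, since $S\cap N[v]=\emptyset$ makes $S\cup\{v\}$ independent. The paper gives no proof of this proposition (it is quoted from Woodroofe), and your argument is exactly the standard one underlying that reference, so there is nothing to add.
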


Corollary \ref{th3} and Proposition \ref{prop1} imply the following.

\begin{corollary}
\cite{Baker}\label{cor3} If $G$ is a well-covered graph and $v\in Simp\left(
G\right)  $, then $G-u$ is well-covered for each $u\in N\left(  v\right)  $.
\end{corollary}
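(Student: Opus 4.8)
The plan is simply to compose the two results quoted immediately before the statement. First I would fix an arbitrary $u\in N(v)$. Since $u$ is adjacent to $v$, the vertex $u$ is not isolated in $G$, so the hypothesis of Corollary \ref{th3} is satisfied for $u$. Next, because $v$ is simplicial in $G$, Proposition \ref{prop1} gives $N(v)\subseteq Shed(G)$, and in particular $u\in Shed(G)$.

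With these two observations in hand, I would apply the \textquotedblleft only if\textquotedblright\ direction of Corollary \ref{th3}: $G$ is well-covered and $u$ is a non-isolated shedding vertex of $G$, hence $G-u$ is well-covered. Since $u$ was an arbitrary neighbor of $v$, this proves the claim for every $u\in N(v)$.

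I do not expect any real obstacle here; the corollary is a direct consequence of chaining Proposition \ref{prop1} with Corollary \ref{th3}, and the single point that warrants a word of care is verifying the non-isolation hypothesis of Corollary \ref{th3}, which is immediate from $u\in N(v)$. (An alternative, self-contained route would unfold the definition of shedding vertex and Theorem \ref{th10} directly, but this only reproves Corollary \ref{th3} and is therefore not worth carrying out.)
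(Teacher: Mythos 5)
Your proposal is correct and matches the paper exactly: the paper derives this corollary by combining Proposition \ref{prop1} (neighbors of a simplicial vertex are shedding) with Corollary \ref{th3} (a non-isolated vertex of a well-covered graph is shedding iff its deletion leaves a well-covered graph). Your added remark about verifying non-isolation of $u$ via $u\in N(v)$ is the only detail the paper leaves implicit, and it is handled correctly.
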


\begin{proposition}
\label{prop4}If each vertex of $G$ belongs to exactly one simplex and every
simplex contains at least two simplicial vertices, then $G$ is in
$\mathbf{W}_{\mathbf{2}}$.
\end{proposition}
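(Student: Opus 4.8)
The plan is to verify the characterization in Theorem~\ref{th1}\emph{(vi)}: for every pair of disjoint non-maximum independent sets $A,B$ in $G$, find some $S\in\Omega(G)$ with $A\subseteq S$ and $B\cap S=\emptyset$. By Theorem~\ref{th2}, the hypothesis guarantees that $G$ is simplicial and well-covered, so each vertex lies in exactly one simplex; let $Q_1,\dots,Q_k$ be the simplices of $G$. The first observation I would record is that every maximum independent set of $G$ is obtained by picking exactly one vertex from each simplex: since the simplices partition $V(G)$ (each vertex is in exactly one) and any independent set meets each $Q_i$ in at most one vertex, we get $\alpha(G)\le k$; conversely a transversal of the simplices that happens to be independent would have size $k$, and well-coveredness forces $\alpha(G)=k$, with every $S\in\Omega(G)$ a transversal of $\{Q_1,\dots,Q_k\}$ hitting each simplex exactly once.

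Next I would translate the data $A,B$ into this language. Since $A$ is independent, it meets each simplex in at most one vertex; since $A$ is non-maximum, there is at least one simplex, say $Q_j$, disjoint from $A$. The goal is to extend $A$ to a transversal that is independent and avoids $B$. I would build $S$ greedily: for each simplex already met by $A$, keep that vertex; for each simplex $Q_i$ not met by $A$, I must choose a vertex of $Q_i$. The two constraints are (a) the chosen vertex must not be adjacent to anything already chosen, and (b) it must not lie in $B$. Here is where the hypothesis ``every simplex contains at least two simplicial vertices'' enters: a simplicial vertex $u$ of $Q_i$ has $N[u]=Q_i$, so $u$ is non-adjacent to every vertex outside $Q_i$; hence choosing a \emph{simplicial} vertex of $Q_i$ automatically satisfies constraint (a) regardless of the other choices. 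Since $Q_i$ has at least two simplicial vertices and $B$ meets $Q_i$ in at most one vertex (as $B$ is independent), at least one simplicial vertex of $Q_i$ lies outside $B$, and I pick that one, satisfying (b) as well.

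The only remaining worry is the simplices already met by $A$: for such a $Q_i$ the vertex of $A\cap Q_i$ is forced, and it might happen to be a non-simplicial vertex of $Q_i$ lying in $\dots$ — but that is fine, because $A\cap B=\emptyset$ guarantees this forced vertex is not in $B$, and independence of $A$ together with the fact that for every \emph{other} simplex we picked a simplicial vertex (non-adjacent to everything outside its own simplex) guarantees no edge is created. So the resulting transversal $S$ is independent, has size $k=\alpha(G)$, contains $A$, and is disjoint from $B$. This establishes Theorem~\ref{th1}\emph{(vi)}, hence $G\in\mathbf{W}_{\mathbf{2}}$.

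I expect the main obstacle to be stating cleanly why the greedy transversal is actually independent: one must argue that an edge of $S$ would have to join two vertices in distinct simplices, that at least one endpoint of such an edge is a vertex we chose to be simplicial (or a vertex of $A$ with $A$ already independent), and that a simplicial vertex has no neighbors outside its own simplex — so no such edge can exist. A subtle point worth a sentence is that a simplex met by $A$ contributes a vertex that may be adjacent to vertices of \emph{other} simplices met by $A$, but those pairs are ruled out directly by independence of $A$; all mixed pairs involving a not-yet-met simplex are ruled out by the simplicial choice. Once this case analysis is laid out, the proof is short, and invoking Theorem~\ref{th1} to conclude membership in $\mathbf{W}_{\mathbf{2}}$ is immediate.
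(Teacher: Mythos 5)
Your proof is correct, but it follows a genuinely different route from the paper. The paper's argument is a two-line application of machinery already developed: Theorem~\ref{th2} gives that $G$ is well-covered, Corollary~\ref{cor3} (applied with a second simplicial vertex of each simplex, which is where the ``at least two'' hypothesis enters) gives that $G-u$ is well-covered for every vertex $u$, and then Theorem~\ref{th1}\emph{(i)} yields $G\in\mathbf{W}_{\mathbf{2}}$. You instead verify condition \emph{(vi)} of Theorem~\ref{th1} directly: you observe that the hypothesis forces the simplices to partition $V(G)$, that a simplicial vertex of a simplex has no neighbours outside it (its closed neighbourhood is itself a simplex, hence equals its simplex by uniqueness), so $\alpha(G)$ equals the number of simplices and maximum independent sets are exactly the independent transversals; the greedy choice of a simplicial vertex avoiding $B$ in each simplex not met by $A$ then works, and your case analysis for independence of the resulting transversal is sound. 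Your approach buys a self-contained, constructive proof that bypasses Corollary~\ref{cor3} (and the shedding-vertex apparatus behind it) and yields as a by-product an explicit description of $\Omega(G)$ for these graphs; the paper's approach is shorter given its earlier results. Two small remarks: Theorem~\ref{th1} is stated for graphs without isolated vertices, so you should note that each simplex has at least two vertices, hence $G$ has none; and the appeal to well-coveredness to get $\alpha(G)=k$ is unnecessary, since your independent transversal of simplicial vertices already shows it (indeed well-coveredness of $G$ is a consequence of the condition you verify).
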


\begin{proof}
By Theorem \ref{th2}, $G$ is well-covered. Further, Corollary \ref{cor3}
ensures that $G-v$ is well-covered for each $v\in V\left(  G\right)  $.
Consequently, $G$ belongs to $\mathbf{W}_{\mathbf{2}}$, according to Theorem
\ref{th1}\emph{(i)}, because, clearly, $G\neq P_{3}$.
\end{proof}

There are simplicial graphs in $\mathbf{W}_{\mathbf{2}}$, which do not satisfy
the condition that every simplex must contain at least two simplicial
vertices; e.g., consider the graph $G_{3}$ from Figure \ref{fig3}. Notice that
if $Shed\left(  G\right)  =V\left(  G\right)  $, it is not true that $G$
belongs to $\mathbf{W}_{\mathbf{2}}$; e.g., the graph $G_{1}$ from Figure
\ref{fig3}.

\begin{theorem}
\label{th12}Let $G$\ be a well-covered graph without isolated vertices. Then
the following assertions are equivalent:

\emph{(i)} $G$\emph{ }belongs to the class $\mathbf{W}_{\mathbf{2}}$;

\emph{(ii)} the differential function is monotonic over $\mathrm{Ind}(G)$,
i.e.,\emph{ }if\emph{ }$A\subseteq B\in\mathrm{Ind}\left(  G\right)  $, then
$\partial\left(  A\right)  \leq\partial\left(  B\right)  $;

\emph{(iii) }$Shed\left(  G\right)  =V\left(  G\right)  $;

\emph{(iv)} no independent set $S$ leaves an isolated vertex in $G-N_{G}[S]$;

\emph{(v)} $G-N_{G}\left[  v\right]  \in\mathbf{W}_{\mathbf{2}}$ for every
$v\in V\left(  G\right)  $.
\end{theorem}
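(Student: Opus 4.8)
The plan is to establish the cycle (i) $\Rightarrow$ (ii) $\Rightarrow$ (iii) $\Rightarrow$ (i), then append (iii) $\Leftrightarrow$ (iv) directly, and finally close (i) $\Rightarrow$ (v) $\Rightarrow$ (iv) so that (v) joins the equivalence class. Several edges are essentially bookkeeping over the results already available. For (i) $\Rightarrow$ (ii) I would work componentwise: on a component different from $K_2$ apply Theorem \ref{prop11}\emph{(vi)}, and on a $K_2$ note that $\partial$ vanishes on $\mathrm{Ind}(K_2)$; since for an independent set $A$ the neighbourhood $N_G(A)$ is the disjoint union of the sets $N_{G_j}(A\cap V(G_j))$, the differential is additive over components and monotonicity is inherited. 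For (iii) $\Rightarrow$ (i): as $G$ is well-covered with no isolated vertices, Corollary \ref{th3} turns $Shed(G)=V(G)$ into ``$G-v$ is well-covered for every $v$'', and since $G\neq P_3$ (a well-covered graph), the equivalence (i) $\Leftrightarrow$ (iii) of Theorem \ref{th1} gives $G\in\mathbf{W}_2$. For (iii) $\Leftrightarrow$ (iv): if an independent set $S$ leaves a vertex $v$ isolated in $G-N_G[S]$, then $v\notin N_G[S]$, hence $S\subseteq V(G)-N_G[v]$; thus (iv) is exactly the statement that for every $v$ no such $S$ exists, which is the equivalence (iii) $\Leftrightarrow$ (iv) of Corollary \ref{cor7} quantified over all vertices. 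Finally (i) $\Rightarrow$ (v) is Corollary \ref{cor5}.

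The first substantive step is (ii) $\Rightarrow$ (iii). If $v$ is not a shedding vertex, then by definition there is an independent set $S\subseteq V(G)-N_G[v]$ such that every $u\in N_G(v)$ is adjacent to $S$, i.e.\ $N_G(v)\subseteq N_G(S)$; since $S\cap N_G[v]=\emptyset$, the set $S\cup\{v\}$ is independent, and
\[
\partial(S\cup\{v\})=\left\vert N_G(S)\cup N_G(v)\right\vert -\left\vert S\right\vert -1=\left\vert N_G(S)\right\vert -\left\vert S\right\vert -1=\partial(S)-1 .
\]
As $S\subsetneq S\cup\{v\}$ are both independent, this contradicts the monotonicity hypothesis (ii), so every vertex of $G$ is shedding.

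The main obstacle is (v) $\Rightarrow$ (iv), which I would prove by a localization argument. Suppose, for contradiction, that some independent set $S$ leaves a vertex $v$ isolated in $G-N_G[S]$. Since $G$ has no isolated vertices, $S\neq\emptyset$; pick any $s_0\in S$ and set $G_0=G-N_G[s_0]$ and $S_0=S-\{s_0\}$. From $v\notin N_G[S]$ one obtains $v\in V(G_0)$ and $N_G(v)\subseteq N_G(S)=N_G(S_0)\cup N_G(s_0)$; a short neighbourhood computation (removing $N_G[s_0]$ kills $N_G(s_0)$) then yields $N_{G_0}(v)\subseteq N_{G_0}(S_0)$ and $v\notin N_{G_0}[S_0]$, so $S_0$ leaves $v$ isolated in $G_0-N_{G_0}[S_0]$. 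But $G_0\in\mathbf{W}_2$ by hypothesis (v), and for any graph $H\in\mathbf{W}_2$ and any independent set $T$ the graph $H-N_H[T]$ has no isolated vertex: when $T$ is non-maximum this is Corollary \ref{cor4444}\emph{(i)}, and when $T$ is maximum that subgraph is empty. This contradiction proves (iv). The point is precisely this descent: a counterexample to (iv) in $G$ restricts to a counterexample in $G-N_G[s_0]$ for any $s_0$ in the offending set, which is why the single-vertex condition (v) is already strong enough.

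Assembling the pieces, (i) $\Rightarrow$ (ii) $\Rightarrow$ (iii) $\Rightarrow$ (i) gives (i) $\Leftrightarrow$ (ii) $\Leftrightarrow$ (iii); (iii) $\Leftrightarrow$ (iv) was shown directly; and (i) $\Rightarrow$ (v) $\Rightarrow$ (iv) $\Rightarrow$ (iii) $\Rightarrow$ (i) folds (v) into the equivalence class. The degenerate case $\alpha(G)=1$ (where $G$ is complete and all five conditions hold) requires no special treatment in any of these steps.
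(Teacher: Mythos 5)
Your proof is correct, and most of its edges coincide with the paper's: (iii) $\Rightarrow$ (i) via Corollary \ref{th3} and Theorem \ref{th1}\emph{(i)}, (iii) $\Leftrightarrow$ (iv) via Corollary \ref{cor7}, (i) $\Rightarrow$ (v) via Corollary \ref{cor5}, and your (v) $\Rightarrow$ (iv) is essentially the same localization the paper performs (pick $s_{0}\in S$, pass to $G-N_{G}[s_{0}]\in\mathbf{W}_{2}$, apply Corollary \ref{cor4444}\emph{(i)} together with the fact that graphs in $\mathbf{W}_{2}$ have no isolated vertices), only written more transparently and with the degenerate cases ($S_{0}$ maximum or empty) handled explicitly. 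The genuine difference is how monotonicity of $\partial$ is exploited: the paper proves (ii) $\Rightarrow$ (i) directly, taking a non-maximum independent $A$ and $v\notin A$, using $\partial(A)\leq\partial(A\cup\{v\})$ to produce some $w\in N(v)-N(A)$ and then a maximum independent set through $A\cup\{w\}$ avoiding $v$, as required by Theorem \ref{th1}\emph{(vii)}; you instead prove (ii) $\Rightarrow$ (iii) by the one-line computation that a non-shedding vertex $v$ with witness $S$ satisfies $N_{G}(v)\subseteq N_{G}(S)$, whence $\partial(S\cup\{v\})=\partial(S)-1<\partial(S)$, which is more elementary, isolates exactly where monotonicity is needed, and then reaches (i) through your (iii) $\Rightarrow$ (i) edge. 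A further small improvement over the paper: in (i) $\Rightarrow$ (ii) you treat disconnected graphs by additivity of $\partial$ over components and dispose of $K_{2}$-components separately before citing Theorem \ref{prop11}\emph{(vi)}, which is stated only for connected graphs different from $K_{2}$, whereas the paper invokes it without comment.
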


\begin{proof}
\emph{(i) }$\Leftrightarrow$ \emph{(ii)} \textquotedblleft\textit{If}%
\textquotedblright\ Let $A$ be a non-maximum independent set and $v\notin A$.
By Theorem \ref{th1}\emph{(vii)},\emph{ }it is enough to find some $S\in
\Omega(G)$ such that $A\subset S$ and $v\notin S$.

\qquad\textit{Case 1.} $v\in N\left(  A\right)  $. Since $G$\ is well-covered,
there exists a maximum independent set including $A$, say $S$. Clearly,
$v\notin S$.

\qquad\textit{Case 2.} $v\notin N\left(  A\right)  $. Hence, $B=A\cup\{v\}$ is
independent. By the monotonicity property,
\[
\left\vert N(A)\right\vert -\left\vert A\right\vert =\partial\left(  A\right)
\leq\partial\left(  B\right)  =\left\vert N(B)\right\vert -\left\vert
B\right\vert =\left\vert N(A\cup\{v\})\right\vert -\left\vert A\right\vert
-1\text{.}%
\]
Thus, $\left\vert N(A)\right\vert +1\leq\left\vert N(A\cup\{v\})\right\vert $,
which means that there is $w\in N(v)-N(A)$. Since $G$\ is well-covered, there
exists a maximum independent set including $A\cup\{w\}$, say $S$. Clearly,
$v\notin S$.

\textquotedblleft\textit{Only if}\textquotedblright\ It follows from Theorem
\ref{prop11}\emph{(vi).}

\emph{(i) }$\Leftrightarrow$ \emph{(iii)} Clearly, $G\neq P_{3}$, because
$P_{3}$ is not well-covered. The rest of the proof is in Corollary \ref{th3}
and Theorem \ref{th1}\emph{(i).}

\emph{(i) }$\Leftrightarrow$ \emph{(iv)} It follows directly from Corollary
\ref{cor7}\emph{(iii)}.

\emph{(i) }$\Rightarrow$ \emph{(v)}\ \ It follows from Corollary \ref{cor5}

\emph{(v) }$\Rightarrow$ \emph{(iv) }Let $S$ be a non-maximum independent set
in $G$. Since $G$ is well-covered, there exists some maximum independent set
$A$ such that $S\subset A$. Let $v\in A-S$ and $u\in S$. Clearly, $S\cap
N_{G}\left[  v\right]  =\emptyset$, and $G-N_{G}\left[  u\right]
\in\mathbf{W}_{\mathbf{2}}$, by the hypothesis. Hence, according to Corollary
\ref{cor4444}\emph{(i)}, it follows that the graph
\[
\left(  G-N_{G}\left[  u\right]  \right)  -N_{G-N_{G}\left[  u\right]
}\left[  S-u\right]  =G-N_{G}\left[  S\right]
\]
belongs to $\mathbf{W}_{\mathbf{2}}$ as well. Hence, $N_{G}\left(  v\right)
\nsubseteqq$ $N_{G}\left(  S\right)  $, otherwise $v$ is isolated in
$G-N_{G}\left[  S\right]  $. In addition, no $w\in N_{G}\left(  v\right)  $ is
isolated in $G-N_{G}\left[  S\right]  $, because $wv\in E\left(  G\right)  $.
Finally, no $w$ from $V\left(  G-N_{G}\left[  v\right]  \right)  -N_{G}\left[
S\right]  $ is isolated in $G-N_{G}\left[  S\right]  $, since $G-N_{G}\left[
v\right]  \in\mathbf{W}_{\mathbf{2}}$.
\end{proof}

\section{Graph operations}

In \cite{Staples} are shown a number of ways to build graphs in class
$\mathbf{W}_{n}$, using graphs from $\mathbf{W}_{n}$ or $\mathbf{W}_{n+1}$. In
the following we make known how to create infinite subfamilies of
$\mathbf{W}_{\mathbf{2}}$, by means of corona, join, and concatenation of graphs.

Let $\mathcal{H}=\{H_{v}:v\in V(G)\}$ be a family of graphs indexed by the
vertex set of a graph $G$. The corona $G\circ\mathcal{H}$ of $G$ and
$\mathcal{H}$ is the disjoint union of $G$ and $H_{v},v\in V(G)$, with
additional edges joining each vertex $v\in V(G)$ to all the vertices of
$H_{v}$. If $H_{v}=H$ for every $v\in V(G)$, then we denote $G\circ H$ instead
of $G\circ\mathcal{H}$ \cite{FruchtHarary}.

Recall that the \textit{girth} of a graph $G$ is the length of a shortest
cycle contained in $G$, and it is defined as the infinity for every forest.

\begin{theorem}
\label{th4}\emph{(i)} \cite{FinHarNow} Let $G$ be a connected graph of girth
$\geq6$, which is isomorphic to neither $C_{7}$ nor $K_{1}$. Then $G$ is
well-covered if and only if $G=H\circ K_{1}$ for some graph $H$.

\emph{(ii)} \cite{LevMan2007} Let $G$ be a connected graph of girth $\geq5$.
Then $G$ is very well-covered if and only if $G=H\circ K_{1}$ for some graph
$H$.
\end{theorem}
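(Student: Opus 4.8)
The plan is to prove both biconditionals by establishing the easy implication uniformly and then handling the two ``only if'' directions separately; the first is essentially a one-line observation, the second splits into a clean argument for (ii) and a harder one for (i).

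\emph{The ``if'' direction (both parts).} Suppose $G=H\circ K_1$, and write $\ell_v$ for the pendant vertex attached to $v\in V(H)$. If $S$ is a maximal independent set of $G$, then for every $v\in V(H)$ exactly one of $v,\ell_v$ lies in $S$: they cannot both lie in $S$ since $v\ell_v\in E(G)$, and if $v\notin S$ then $\ell_v$, whose only neighbour is $v$, must lie in $S$ by maximality. Hence $|S|=|V(H)|$, so $G$ is well-covered; since moreover $|V(G)|=2|V(H)|=2\alpha(G)$ and $G$ has no isolated vertices, $G$ is very well-covered. This gives ``$\Leftarrow$'' in both (i) and (ii).

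\emph{The ``only if'' direction of (ii).} Assume $G$ is connected, very well-covered, of girth $\ge5$, and $G\ne K_2$. First, $G$ has a perfect matching: pick $S\in\Omega(G)$ and put $T=V(G)\setminus S$, so $|S|=|T|=\alpha(G)$ because $G$ is very well-covered; as every well-covered graph is quasi-regularizable we have $|N(S')|\ge|S'|$ for each independent $S'$, and $N(S')\subseteq T$ when $S'\subseteq S$, so Hall's condition holds for the bipartite graph between $S$ and $T$ and produces a perfect matching $M$ of $G$, necessarily with $|M|=\alpha(G)$. Consequently every maximum independent set meets each edge of $M$ in exactly one vertex. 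Now let $ab\in M$ and suppose $\deg a\ge2$ and $\deg b\ge2$; choose $x\in N(a)\setminus\{b\}$ and $y\in N(b)\setminus\{a\}$. Girth $\ge5$ forbids the triangle on $\{a,b,x\}$ and the $4$-cycle $a,b,y,x$, so $x\ne y$ and $xy\notin E(G)$; extending the independent set $\{x,y\}$ to some $S\in\Omega(G)$ gives $a\notin S$ and $b\notin S$, contradicting that $S$ meets $ab$. Hence every edge of $M$ has an endpoint of degree $1$; let $L$ collect these endpoints (no $M$-edge has two degree-$1$ endpoints, else it would be all of $G$). Then $L$ is exactly the leaf set of $G$, it is independent, and $M$ matches each $v\in B:=V(G)\setminus L$ to its \emph{unique} leaf neighbour (a second one would give a second $M$-edge at $v$), while $N(L)=B$. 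Therefore $G=G[B]\circ K_1$.

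\emph{The ``only if'' direction of (i), and the main obstacle.} Assume $G$ is connected, well-covered, of girth $\ge6$, and $G\notin\{K_1,C_7\}$; we may also assume $G\ne K_2=K_1\circ K_1$. Since girth $\ge6$ makes $G$ triangle-free, a simplex of $G$ is necessarily an edge $\{\ell,s\}$ with $\ell$ a leaf. The key claim is that \emph{every vertex of $G$ is a leaf or is adjacent to a leaf}, i.e.\ $G$ is simplicial. Granting it, Theorem \ref{th2} shows every vertex lies in exactly one simplex, which forces the leaves to form an independent set $L$ disjoint from $B:=V(G)\setminus L$, each vertex of $B$ to carry exactly one pendant leaf, and distinct vertices of $B$ to carry distinct leaves; hence $G=G[B]\circ K_1$. (One can also close this without Theorem \ref{th2}: if some $s\in B$ had two leaf neighbours $\ell_1,\ell_2$, then for $S\in\Omega(G)$ with $s\in S$ the set $(S\setminus\{s\})\cup\{\ell_1,\ell_2\}$ would be independent of size $\alpha(G)+1$, which is impossible.) The hard part is the key claim. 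I would argue by contradiction: if $v$ is a non-leaf with no leaf neighbour, then each $u\in N(v)$ has a neighbour $w_u\ne v$, and girth $\ge6$ forces $N(v)$ to be independent and the $w_u$ to be distinct, pairwise non-adjacent, outside $N[v]$, and adjacent within $N[v]$ only to $u$; extending $\{w_u:u\in N(v)\}$ to a maximal independent set and comparing its size to that of a maximal independent set containing $v$ (or all of $N(v)$) should contradict well-coveredness. The delicate point is controlling the interaction with vertices at distance $\ge2$ from $v$, and this is precisely where $C_7$ has to be excluded. Note that part (ii) sidesteps this entirely via the perfect-matching argument above, the hypothesis ``very'' being what rules out the exceptional cycles there (indeed no cycle of girth $\ge5$ is very well-covered).
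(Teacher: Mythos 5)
The paper offers no proof of Theorem \ref{th4} at all: both parts are quoted from \cite{FinHarNow} and \cite{LevMan2007}, so there is no internal argument to compare yours with, and you are in effect reproving cited results. Judged on their own terms, your ``if'' direction and your proof of part (ii) are correct and complete: the pendant structure forces every maximal independent set to contain exactly one endpoint of each pendant edge; the perfect matching obtained from quasi-regularizability plus Hall's condition between $S\in\Omega(G)$ and its complement, the counting observation that a maximum independent set meets every matching edge, and the girth-$\geq 5$ exclusion of a triangle or $4$-cycle at a matching edge whose two endpoints both have degree $\geq 2$ do yield $G=G[B]\circ K_{1}$. This is essentially the standard (Favaron-style) route to the very well-covered case and is self-contained.

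Part (i), however, contains a genuine gap. You reduce it to the key claim that in a connected well-covered graph of girth $\geq 6$ other than $K_{1}$ and $C_{7}$ every vertex is a leaf or adjacent to a leaf; given the easy closing argument you supply, this claim is equivalent to the theorem itself. For the claim you only sketch an attack (``extending $\{w_{u}:u\in N(v)\}$ \ldots should contradict well-coveredness'') and explicitly concede that the interaction with vertices at distance $\geq 2$ from $v$ and the exclusion of $C_{7}$ are unresolved. That concession is exactly where the content of the Finbow--Hartnell--Nowakowski theorem lies: every vertex of $C_{7}$ is a non-leaf with no leaf neighbour and $C_{7}$ is well-covered, so no purely local comparison of maximal independent sets around $v$ of the kind you outline can succeed; the published proof requires a substantial global case analysis of girth-$\geq 5$ well-covered graphs, which your proposal does not supply. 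Hence the ``if'' direction and part (ii) stand, but part (i) is not proved.
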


Using corona operation one can build well-covered graphs of any girth as follows.

\begin{proposition}
\label{prop3}\cite{ToppLutz} The corona $G\circ\mathcal{H}$ of $G$ and
$\mathcal{H}=\{H_{v}:v\in V(G)\}$ is well-covered if and only if each
$H_{v}\in\mathcal{H}$ is a complete graph on at least one vertex.
\end{proposition}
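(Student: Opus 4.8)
The plan is to prove Proposition~\ref{prop3} by first reducing the question to understanding what the maximal independent sets of a corona look like, and then checking the well-covered condition directly on these sets. First I would observe that for $H=G\circ\mathcal{H}$, every maximal independent set $S$ of $H$ decomposes as $S=S_0\cup\bigl(\bigcup_{v\in V(G)-S_0} T_v\bigr)$, where $S_0=S\cap V(G)$ is independent in $G$, and for each $v\notin S_0$ the set $T_v$ is a maximal independent set of $H_v$ (while for $v\in S_0$ we must have $S\cap V(H_v)=\emptyset$, since $v$ is adjacent to all of $H_v$). The key point is that maximality of $S$ in $H$ forces $S_0$ to be a \emph{maximal} independent set of $G$: if some $u\in V(G)-N_G[S_0]$ existed, then $S_0\cup\{u\}$ together with suitable $T_v$'s would still be independent in $H$ (one only needs to drop $T_u$), contradicting maximality.

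Next I would handle the two directions. For the ``if'' direction, suppose each $H_v$ is a complete graph $K_{n_v}$ with $n_v\geq1$. Then every maximal independent set $T_v$ of $H_v$ is a single vertex, so $\lvert T_v\rvert=1$ for all $v$. Hence any maximal independent set $S$ of $H$ has size $\lvert S_0\rvert+\lvert V(G)-S_0\rvert=\lvert V(G)\rvert$, independent of the choice of $S_0$ and of the chosen vertices in the $H_v$'s. Therefore all maximal independent sets of $H$ have the same size $\lvert V(G)\rvert$, so $H$ is well-covered.

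For the ``only if'' direction, I would argue contrapositively: suppose some $H_w$ is not a complete graph on $\geq1$ vertices. The case $V(H_w)=\emptyset$ is excluded by the definition of corona (each $H_v$ is a graph, hence in particular could be empty — here I would note that if $H_w$ has no vertices the statement still needs care, but the natural reading is $n_v\geq1$; alternatively an empty $H_w$ makes $w$ isolated-like and one adjusts). Assuming $H_w$ has at least one vertex but is not complete, it has two distinct maximal independent sets of different sizes, say $T_w'$ and $T_w''$ with $\lvert T_w'\rvert<\lvert T_w''\rvert$ (a non-complete graph has a maximal independent set of size $\geq2$ and also, picking a vertex adjacent to something, a maximal one that could be smaller — more carefully: any graph that is not complete and not edgeless on its vertex set has two maximal independent sets of distinct sizes, which is exactly the statement that $H_w$ is not well-covered; and a single-vertex or edgeless graph \emph{is} complete only if it has one vertex). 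Then fix a maximal independent set $S_0$ of $G$ with $w\notin S_0$ (such exists: take any maximal independent set of $G$ containing a neighbor of $w$ in $G$, or if $w$ is isolated in $G$ note $w\in$ every maximal set and instead use that $H_w$ non-well-covered directly contradicts well-coveredness of the component $K_1\circ H_w$). Extending $S_0$ once using $T_w'$ on the $w$-copy and once using $T_w''$, and the \emph{same} choices elsewhere, yields two maximal independent sets of $H$ of different sizes, so $H$ is not well-covered.

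The main obstacle I anticipate is the bookkeeping in the ``only if'' direction: pinning down precisely why a graph $H_w$ that fails to be ``a complete graph on at least one vertex'' must admit two maximal independent sets of differing cardinalities (equivalently, fail to be well-covered), and handling the degenerate cases cleanly — namely $H_w$ empty, $H_w$ a single vertex (which \emph{is} complete, so fine), and $w$ isolated in $G$. The cleanest route is to reduce to a single connected component $K_1\circ H_w$ of a suitable sub-corona and quote that $K_m$ is the \emph{only} well-covered complete-like graph, i.e.\ a graph is well-covered with all maximal independent sets singletons precisely when it is complete; combined with the decomposition of maximal independent sets above, this makes both directions fall out. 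Everything else is the routine verification that the size $\lvert S\rvert=\lvert V(G)\rvert$ is forced once all $H_v$ are complete.
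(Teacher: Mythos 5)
Your ``if'' direction is essentially correct, but two structural claims in your write-up are false, and the second one breaks the ``only if'' direction. First, maximality of $S$ in $G\circ\mathcal{H}$ does \emph{not} force $S_{0}=S\cap V(G)$ to be maximal in $G$: replacing $T_{u}$ by $u$ produces a different independent set, not a superset of $S$, so there is no contradiction with the maximality of $S$. In fact \emph{every} independent set $S_{0}$ of $G$ (non-maximal ones and $\emptyset$ included) extends to a maximal independent set of the corona, because each $u\in V(G)-S_{0}$ is already blocked by the nonempty maximal set chosen inside $H_{u}$. This error costs nothing in the ``if'' direction, since your count $\left\vert S\right\vert =\left\vert S_{0}\right\vert +\left\vert V(G)-S_{0}\right\vert =\left\vert V(G)\right\vert$ never uses maximality of $S_{0}$, but the freedom it denies is exactly what the converse needs. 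Second, and this is the genuine gap, your ``only if'' argument rests on the claim that a non-complete $H_{w}$ must have two maximal independent sets of different sizes, i.e.\ must fail to be well-covered. That is false: $C_{4}$, $K_{2,2}$ and $2K_{1}$ are non-complete and well-covered. Take $G=K_{2}$ with vertices $w,u$, $H_{w}=C_{4}$, $H_{u}=K_{1}$: the proposition says the corona is not well-covered (indeed $\{w\}\cup V(H_{u})$ is maximal of size $2$, while $\{u\}$ together with a diagonal pair of $H_{w}$ is maximal of size $3$), yet your construction --- fix a maximal $S_{0}$ of $G$ avoiding $w$ and vary the maximal set inside $H_{w}$ --- produces only sets of size $3$, and your fallback via ``$H_{w}$ not well-covered'' is unavailable because $C_{4}$ is well-covered.

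The missing idea is to compare maximal independent sets of the corona that \emph{contain} $w$ (and hence meet $V(H_{w})$ in $\emptyset$) with ones that trade $w$ for vertices of $H_{w}$. If $H_{w}$ is not complete, pick two nonadjacent vertices of $H_{w}$ and extend them to a maximal independent set $T_{w}$ of $H_{w}$ with $\left\vert T_{w}\right\vert \geq2$; take any maximal independent set $S$ of the corona with $w\in S$, and verify that $\left(  S-\{w\}\right)  \cup T_{w}$ is again maximal (every blocked vertex stays blocked, since for $v\in V(G)-S$ the set $S\cap V(H_{v})$ is a nonempty maximal independent set of $H_{v}$) and has size $\left\vert S\right\vert +\left\vert T_{w}\right\vert -1>\left\vert S\right\vert$. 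Note that this repaired argument works precisely because $S\cap V(G)$ is allowed to be non-maximal in $G$ --- the possibility your first claim ruled out. For the record, the paper itself gives no proof of this proposition; it is quoted from Topp and Volkman, so there is no in-paper argument to compare against, and the burden of a complete proof falls entirely on your construction.
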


For example, all the graphs in Figure \ref{fig23} are of the form
$G\circ\mathcal{H}$, but only $G_{1}$ is not well-covered, while $G_{3}$ is
$1$-well-covered.

\begin{figure}[h]
\setlength{\unitlength}{1cm}\begin{picture}(5,1.25)\thicklines
\multiput(1.5,0)(1,0){2}{\circle*{0.29}}
\multiput(1.5,1)(1,0){3}{\circle*{0.29}}
\put(1.5,0){\line(1,0){1}}
\put(1.5,0){\line(0,1){1}}
\put(2.5,0){\line(0,1){1}}
\put(2.5,0){\line(1,1){1}}
\put(0.8,0.5){\makebox(0,0){$G_{1}$}}
\multiput(5,0)(1,0){2}{\circle*{0.29}}
\multiput(5,1)(1,0){3}{\circle*{0.29}}
\put(5,0){\line(1,0){1}}
\put(5,0){\line(0,1){1}}
\put(6,0){\line(0,1){1}}
\put(6,1){\line(1,0){1}}
\put(6,0){\line(1,1){1}}
\put(4.3,0.5){\makebox(0,0){$G_{2}$}}
\multiput(9.5,0)(1,0){2}{\circle*{0.29}}
\multiput(8.5,1)(1,0){5}{\circle*{0.29}}
\put(8.5,1){\line(1,0){1}}
\put(8.5,1){\line(1,-1){1}}
\put(9.5,0){\line(1,0){1}}
\put(9.5,0){\line(0,1){1}}
\put(10.5,0){\line(1,1){1}}
\put(10.5,0){\line(0,1){1}}
\put(10.5,1){\line(1,0){2}}
\put(10.5,0){\line(2,1){2}}
\qbezier(10.5,1)(11.5,1.5)(12.5,1)
\put(7.8,0.5){\makebox(0,0){$G_{3}$}}
\end{picture}\caption{$G_{1}=P_{2}\circ\left\{  K_{1},2K_{1}\right\}  $,
$G_{2}=P_{2}\circ\left\{  K_{1},K_{2}\right\}  $, $G_{3}=P_{2}\circ\left\{
K_{2},K_{3}\right\}  $.}%
\label{fig23}%
\end{figure}

\begin{proposition}
\label{prop2} Let $L=G\circ\mathcal{H}$, where $\mathcal{H}=\{H_{v}:v\in
V(G)\}$ and $G$ is an arbitrary graph. Then $L$ belongs to $\mathbf{W}_{2}$ if
and only if each $H_{v}\in\mathcal{H}$ is a complete graph of order two at
least, for every non-isolated vertex $v$, while for each isolated vertex $u$,
its corresponding $H_{u}$ may be any complete graph.
\end{proposition}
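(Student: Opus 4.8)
The plan is to combine Proposition~\ref{prop3}, which says exactly when a corona is well-covered, with the characterization of $\mathbf{W}_{\mathbf{2}}$ provided by Theorem~\ref{th1}, together with two elementary facts: $K_{m}\in W_{m}\subseteq\mathbf{W}_{\mathbf{2}}$ for every $m\geq2$, and a graph belongs to $\mathbf{W}_{\mathbf{2}}$ if and only if each of its connected components does. The structural observation driving the argument is that every connected component of $L=G\circ\mathcal{H}$ is itself the corona of a connected component $C$ of $G$ with the corresponding subfamily $\{H_{v}:v\in V(C)\}$; so I would reduce both implications to the component level. A component arising from an isolated vertex $u$ of $G$ equals $K_{1}\circ H_{u}$, which is $K_{n_{u}+1}$ when $H_{u}=K_{n_{u}}$; a component arising from a component $C$ of $G$ with $\left\vert V(C)\right\vert\geq2$ has all of its base vertices non-isolated in $G$.

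For the ``if'' direction, assume the stated conditions. A component coming from an isolated $u$ equals $K_{n_{u}+1}$ with $n_{u}\geq1$, hence is a complete graph on at least two vertices, hence lies in $\mathbf{W}_{\mathbf{2}}$. For a component $M=C\circ\{H_{v}:v\in V(C)\}$ with $C$ connected and $\left\vert V(C)\right\vert\geq2$, each $H_{v}=K_{n_{v}}$ with $n_{v}\geq2$, and I would verify the hypotheses of Theorem~\ref{th1}\emph{(i)}: $M$ has no isolated vertex (every base vertex sees its nonempty blob, every blob vertex sees its base vertex), $M\neq P_{3}$ (it has at least $6$ vertices), and $M-x$ is well-covered for every $x$. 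For the last point: if $x\in V(H_{v})$, then $M-x=C\circ\{H_{v}':v\in V(C)\}$ with $H_{v}'=K_{n_{v}-1}$ (still complete, still non-empty because $n_{v}\geq2$) and the other blobs unchanged, which is well-covered by Proposition~\ref{prop3}; if $x=v\in V(C)$, then $M-x$ is the disjoint union of $K_{n_{v}}$ and the corona $(C-v)\circ\{H_{w}:w\in V(C)-\{v\}\}$, again well-covered by Proposition~\ref{prop3}. Thus $M\in\mathbf{W}_{\mathbf{2}}$ by Theorem~\ref{th1}, and so $L\in\mathbf{W}_{\mathbf{2}}$.

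For the ``only if'' direction, assume $L\in\mathbf{W}_{\mathbf{2}}$. Then $L$ is well-covered, so Proposition~\ref{prop3} forces every $H_{v}$ to be a complete graph $K_{n_{v}}$ with $n_{v}\geq1$; this already settles the isolated vertices of $G$, where an arbitrary complete blob is permitted. It remains to exclude $n_{v}=1$ when $v$ is non-isolated in $G$. If some such $v$ had $H_{v}=\{h\}$, then $\deg_{L}(h)=1$, so $h$ is a leaf of $L$; but the component of $L$ containing $v$ also contains a $G$-neighbor $w$ of $v$ together with the non-empty blob $H_{w}$, so it has at least four vertices and differs from $K_{2}$. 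Being a connected graph in $\mathbf{W}_{\mathbf{2}}$ other than $K_{2}$, it has no leaf by Corollary~\ref{cor4444}\emph{(ii)} --- a contradiction. Hence $n_{v}\geq2$ for every non-isolated $v$.

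I do not anticipate a genuinely hard step; the part needing the most care is the bookkeeping around isolated vertices of $G$. Specifically, one must pass to connected components \emph{before} invoking Theorem~\ref{th1}, so as never to delete the unique vertex of a $K_{1}$-blob sitting at an isolated vertex of $G$ (that would create an empty blob, a situation Proposition~\ref{prop3} does not address); the components attached to isolated vertices of $G$, being bona fide complete graphs on at least two vertices, are then disposed of directly via $K_{m}\in W_{m}$.
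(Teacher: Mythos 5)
Your argument is correct, but it is organized differently from the paper's proof. You first split $L$ into connected components (a component over an isolated vertex of $G$ being a complete graph, hence in $\mathbf{W}_{2}$ since $K_{m}\in W_{m}$ for $m\geq2$), and then, for a component $M=C\circ\{H_{v}\}$ with $C$ connected of order $\geq2$, you verify the vertex-deletion criterion of Theorem~\ref{th1}\emph{(i)}: deleting a blob vertex leaves a corona with smaller but still non-empty complete blobs, deleting a base vertex splits off a complete graph plus a smaller corona, and both are well-covered via Proposition~\ref{prop3}; for necessity you use Proposition~\ref{prop3} plus the no-leaf property of connected $\mathbf{W}_{2}$-graphs (Corollary~\ref{cor4444}\emph{(ii)}) to rule out $K_{1}$-blobs at non-isolated vertices. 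The paper instead argues globally on $L$: for necessity it exhibits a direct obstruction to Theorem~\ref{th1}\emph{(vi)} (an independent set containing a $G$-neighbor of $a$ forces the unique vertex of $H_{a}=K_{1}$ into every maximum extension), and for sufficiency it gives an explicit swap --- given a non-maximum independent $A$ and $b\notin A$, replace $b$ in a maximum extension of $A$ by some $a\in N_{L}(b)-V(G)$ --- thereby verifying Theorem~\ref{th1}\emph{(v)/(vii)} directly, with the isolated-vertex case handled by a closing remark. Your route buys modularity and a clean treatment of isolated vertices of $G$; the paper's is shorter and constructive, avoiding component bookkeeping. One small point of care in your version: when you delete a base vertex, the resulting graph is a disjoint union, so strictly you need the (trivial) fact that well-coveredness is determined component-wise, or else rewrite $K_{n_{v}}=K_{1}\circ K_{n_{v}-1}$ to stay literally within the scope of Proposition~\ref{prop3}; this is cosmetic, not a gap.
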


\begin{proof}
Suppose that $L\in\mathbf{W}_{2}$. Then $L$ is well-covered, and therefore
each $H_{v}\in\mathcal{H}$ is a complete graph on at least one vertex, by
Proposition \ref{prop3}. Assume that for some non-isolated vertex $a\in V(G)$
its corresponding $H_{a}=K_{1}=(\{b\},\emptyset)$. Let $c\in N_{G}(a)$ and $B$
be a non-maximum independent set in $L$ containing $c$. Since $\alpha
(L)=\left\vert V(G)\right\vert $, it follows that every maximum independent
set $S$ of $L$ that includes $B$ must contain the vertex $b$. In other words,
$L$ could not be in $\mathbf{W}_{2}$, according to Theorem \ref{th1}%
\emph{(vi)}. Therefore, each $H_{v}\in\mathcal{H}$ must be a complete graph on
at least two vertices.

Conversely, if each $H_{v}\in\mathcal{H}$ is a complete graph on at least two
vertices, then $L$ is well-covered, by Proposition \ref{prop3}. Let $A$ be a
non-maximum independent set in $L$, and some vertex $b\notin A$. Since $L$ is
well-covered, there is some maximum independent set $S_{1}$ in $L$ such that
$A\subset S_{1}$. If $b\in S_{1}$, let $a\in N_{L}(b)-V(G)$. Hence
$S_{2}=S_{1}\cup\{a\}-\{b\}$ is a maximum independent set in $L$ with
$A\subset S_{2}$. In other words, there is a maximum independent set in $L$,
namely $S\in\{S_{1},S_{2}\}$, such that $A\subset S$ and $b\notin S$.
Therefore, according to Theorem \ref{th1}\emph{(v)}, it follows that
$L\in\mathbf{W}_{2}$. Clearly, if $v$ is isolated in $G$, then even
$H_{v}=K_{1}$ ensures $L$ to be in $\mathbf{W}_{2}$.
\end{proof}

If $\mathcal{H}=\{H_{v}:v\in V(G)\}$ and $L=G\circ\mathcal{H}$ is connected,
$\left\vert V(L)\right\vert \geq3$, has no $4$-cycles, and belongs to
$\mathbf{W}_{2}$, then, by Proposition \ref{prop3}, every $H_{v}$ should be
isomorphic to $K_{2}$, i.e., $L=G\circ K_{2}$. Actually, it has been
strengthened as follows.

\begin{theorem}
\cite{Hartnell} Let $L$ be a connected graph without $4$-cycles. The graph $L$
is in class $\mathbf{W}_{2}$ if and only if $L$ is isomorphic to $K_{2}$,
$C_{5}$ or $L=G\circ K_{2}$, for some graph $G$.
\end{theorem}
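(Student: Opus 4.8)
The plan is to prove the two implications separately, using the characterizations of $\mathbf W_2$ obtained above. For sufficiency: $K_2\in\mathbf W_2$ since $K_n\in W_n$; $C_5\in\mathbf W_2$ because $C_5\neq P_3$ and $C_5-v\cong P_4$ is well-covered for every $v$, so Theorem~\ref{th1}\emph{(i)} applies; and $G\circ K_2\in\mathbf W_2$ for every graph $G$ by Proposition~\ref{prop2}, as $K_2$ is a complete graph on two vertices. For the converse, suppose $L$ is connected, has no $4$-cycle, $L\in\mathbf W_2$, and $L\neq K_2$; by Corollary~\ref{cor4444}\emph{(ii)}, $L$ has no leaf, so $\delta(L)\ge2$. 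By Theorem~\ref{th12}\emph{(iii)} every vertex of $L$ is a shedding vertex, equivalently (Corollary~\ref{cor7}\emph{(iii)}), for each $v$ there is no independent set $S\subseteq V(L)-N[v]$ with $N(v)\subseteq N(S)$. The proof builds such a ``blocking set'' $S$ from second-neighbourhood vertices whenever the local structure at $v$ is not rigid; the $4$-cycle ban forces rigidity, since any two vertices at distance $2$ in $L$ have a unique common neighbour.

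\emph{Step A: either $L\cong C_5$, or every vertex of $L$ is adjacent to, or equal to, a vertex of degree $2$ lying in a triangle with it.} Let $v$ violate the second alternative. If $v$ lies in no triangle, then $N(v)=\{x_1,\dots,x_d\}$ is independent, $d\ge2$; set $Y_i=N(x_i)-\{v\}$. The absence of $4$-cycles makes the $Y_i$ nonempty, pairwise disjoint, disjoint from $N[v]$, and with no edge between $Y_i$ and $x_j$ for $i\neq j$. If some transversal $(y_1,\dots,y_d)\in\prod_iY_i$ were independent, then $S=\{y_1,\dots,y_d\}$ would be a blocking set at $v$; so every transversal carries an edge, necessarily between two different parts and hence a chord of a $5$-cycle through $v$. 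Analysing these edges under the $4$-cycle restriction forces $d=2$ (for $d\ge3$ one locates a blocking set at some $x_i$), then $\deg x_1=\deg x_2=2$, $|Y_1|=|Y_2|=1$, and $v,x_1,y_1,y_2,x_2$ inducing a $C_5$; repeating the argument at $x_1,y_1,y_2,x_2$ forces all of them to have degree $2$, whence $L\cong C_5$ by connectedness. In the only other case $v$ lies in triangles, but in none carrying a degree-$2$ neighbour of $v$ (so $L\not\cong C_5$), and one again produces a blocking set at $v$ to reach a contradiction, using that any degree-$2$ neighbour of $v$ is then triangle-free while neighbours lying in triangles at $v$ have degree $\ge3$.

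\emph{Step B and conclusion.} Assume $L\not\cong C_5$. By Step A every vertex lies in a triangle and, being adjacent to (or equal to) a degree-$2$ vertex $u$ whose closed neighbourhood is a triangle, lies in a simplex; thus $L$ is simplicial. Since $L$ has no $K_4$ (which contains a $4$-cycle) and $\delta(L)\ge2$, every simplex is a triangle, and no two triangles share an edge; by Theorem~\ref{th2} ($L$ being well-covered) every vertex lies in exactly one simplex. A simplex $T=\{v,a,b\}$ with $\deg a=2$ must carry a second vertex of degree $2$: otherwise $\deg v,\deg b\ge3$, so $b$ has a neighbour $c\notin T$ and $v$ has a neighbour $c'\notin T$; the $4$-cycle ban forces $c\neq c'$ and $c\not\sim c'$, and then $\{c,c'\}$ is an independent subset of $V(L)-N[a]$ with $N(a)=\{v,b\}\subseteq N(\{c,c'\})$, contradicting that $a$ is shedding. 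Hence, unless $L\cong K_3=K_1\circ K_2$, every simplex is a triangle with two degree-$2$ (simplicial) vertices and a single ``apex'' of degree $\ge3$; the degree-$2$ vertices are exactly the non-apexes, they pair off inside the simplices, and each has its closed neighbourhood contained in its simplex. Putting $R=\{\text{apexes}\}$ and $G=L[R]$, this says precisely $L=G\circ K_2$.

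The main obstacle is Step A: converting ``$v$ is shedding'' into an explicit independent blocking set in $V(L)-N[v]$ compatible with the $4$-cycle ban, and checking that the only configurations with no such set are a pendant triangle or, globally, $C_5$. The case of vertices of degree $\ge3$ there — in particular, ruling out all girth-$5$ graphs besides $C_5$ — carries the bulk of the casework; once Steps A and B are in place, the corona description follows by bookkeeping, as in the remark preceding the theorem (no $4$-cycles forces the attached cliques to be copies of $K_2$).
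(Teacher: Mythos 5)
First, a point of reference: the paper does not prove this statement at all --- it is quoted from Hartnell's paper \cite{Hartnell} --- so there is no internal proof to compare with; you are attempting an independent proof. Your sufficiency direction is correct and uses exactly the tools the paper supplies ($K_{2}\in\mathbf{W}_{2}$; $C_{5}$ via Theorem \ref{th1}\emph{(i)}; $G\circ K_{2}$ via Proposition \ref{prop2}). Your Step B is also essentially sound once Step A is granted: simpliciality plus Theorem \ref{th2}, the no-$C_{4}$/no-$K_{4}$ observations, and your blocking-set argument showing that a simplex cannot contain exactly one vertex of degree $2$ do yield the partition into triangles and hence $L=G\circ K_{2}$.

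The genuine gap is Step A, which is where the entire content of Hartnell's theorem lives, and its two crucial claims are asserted rather than proved. In the triangle-free-at-$v$ case you say ``analysing these edges under the $4$-cycle restriction forces $d=2$'' and, for $d\geq3$, that ``one locates a blocking set at some $x_{i}$'' --- no such analysis is given, and this is precisely the heavy case analysis of Hartnell's proof. Worse, in the triangle case your claim that ``one again produces a blocking set at $v$'' is not even locally true. Consider $v$ with $N(v)=\{x_{1},x_{2},x_{3}\}$, the triangle $vx_{1}x_{2}$, and outside vertices $y_{1},y_{2},y_{3}$ with $x_{i}y_{i}\in E$, $y_{3}y_{1},y_{3}y_{2}\in E$, and no further edges near $v$; this configuration has no $4$-cycle, $\deg x_{1}=\deg x_{2}=3$, and $v$ violates your alternative, yet every set $S\subseteq V-N[v]$ with $N(v)\subseteq N(S)$ must contain $y_{1},y_{2},y_{3}$ and so is not independent. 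Hence no contradiction can be extracted at $v$ itself: either the configuration cannot be completed to a connected $4$-cycle-free graph in $\mathbf{W}_{2}$, or the failure of sheddingness must be located at another vertex --- in both cases a global argument is needed, which is exactly the casework you defer (and candidly acknowledge as ``the bulk of the casework''). As it stands, the necessity direction is a plausible plan, not a proof.
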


\begin{corollary}
\label{cor2}If $G$ has non-empty edge set, then $G\circ K_{p}$ is
$1$-well-covered if and only if $p\geq2$.
\end{corollary}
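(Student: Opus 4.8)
The plan is to obtain this as a short corollary of Proposition \ref{prop2} combined with the equivalence \emph{(ii)} $\Leftrightarrow$ \emph{(iii)} of Theorem \ref{th1}. The first thing I would record is that $G\circ K_{p}$ never has isolated vertices when $V(G)\neq\emptyset$ and $p\geq1$: every $v\in V(G)$ is joined to the whole (non-empty) copy of $K_{p}$ attached at $v$, and every vertex of that copy is joined at least to $v$. Consequently, by the equivalence of \emph{(ii)} and \emph{(iii)} in Theorem \ref{th1} (which holds for all graphs without isolated vertices), $G\circ K_{p}$ is $1$-well-covered if and only if $G\circ K_{p}\in\mathbf{W}_{\mathbf{2}}$.

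Next I would apply Proposition \ref{prop2} to $L=G\circ\mathcal{H}$ with $\mathcal{H}=\{H_{v}=K_{p}:v\in V(G)\}$. Since $E(G)\neq\emptyset$, the graph $G$ has at least one non-isolated vertex $v_{0}$, and Proposition \ref{prop2} requires $H_{v_{0}}=K_{p}$ to be a complete graph of order at least two, that is, $p\geq2$. Conversely, if $p\geq2$ then every $H_{v}=K_{p}$ is complete of order $\geq2$, and the possibly non-empty set of isolated vertices of $G$ causes no obstruction (their attached complete graphs may be arbitrary), so Proposition \ref{prop2} gives $L\in\mathbf{W}_{\mathbf{2}}$. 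Putting the two directions together, $G\circ K_{p}\in\mathbf{W}_{\mathbf{2}}$ if and only if $p\geq2$, which by the first paragraph is equivalent to $G\circ K_{p}$ being $1$-well-covered.

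There is essentially no hard step here; the only points deserving a line of care are \emph{(a)} checking that the corona construction never produces isolated vertices, so that the passage through Theorem \ref{th1} is legitimate, and \emph{(b)} observing that the hypothesis ``non-empty edge set'' is precisely what makes the order-$\geq2$ requirement in Proposition \ref{prop2} actually constrain $p$ (were $G$ edgeless, $p=1$ would already suffice). If one preferred a self-contained argument instead of quoting Proposition \ref{prop2}, for $p=1$ one would take an edge $uv$ of $G$, let $b$ be the leaf attached to $v$, and note that a non-maximum independent set containing $u$ can be extended to a maximum independent set of $G\circ K_{1}$ only by adding $b$, which contradicts Theorem \ref{th1}\emph{(vi)}; for $p\geq2$ one would, as in the proof of Proposition \ref{prop2}, swap a vertex $v\in V(G)$ lying in a maximum independent set for any vertex of its $K_{p}$-clique to manufacture the required second extension. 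Invoking Proposition \ref{prop2} directly is the cleaner route, however.
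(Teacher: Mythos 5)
Your derivation is correct and takes essentially the paper's route: the corollary is stated there without a separate proof, being an immediate consequence of Proposition \ref{prop2} together with the equivalence \emph{(ii)} $\Leftrightarrow$ \emph{(iii)} of Theorem \ref{th1} for graphs without isolated vertices, which is exactly the combination you use. Your two points of care --- that $G\circ K_{p}$ never has isolated vertices, and that the non-empty edge set supplies a non-isolated vertex of $G$ so that the order-$\geq 2$ requirement in Proposition \ref{prop2} actually forces $p\geq 2$ --- are precisely what makes the reduction legitimate.
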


If $G_{1},G_{2},...,G_{p}$ are pairwise vertex disjoint graphs, then their
\textit{join }(or\textit{ Zykov sum}) is the graph $G=G_{1}+G_{2}+\cdots
+G_{p}$ with $V(G)=V(G_{1})\cup V(G_{2})\cup\cdots\cup V(G_{p})$ and
$E(G)=E(G_{1})\cup E(G_{2})\cup\cdots\cup E(G_{p})\cup\{v_{i}v_{j}:v_{i}\in
V(G_{i}),v_{j}\in V(G_{j}),1\leq i<j\leq p\}$.

\begin{proposition}
\cite{ToppLutz}\label{th5} The graph $G_{1}+G_{2}+\cdots+G_{p}$ is
well-covered if and only if each $G_{k}$ is well-covered and $\alpha\left(
G_{i}\right)  =\alpha\left(  G_{j}\right)  $ for every $i,j\in\left\{
1,2,...,p\right\}  $.
\end{proposition}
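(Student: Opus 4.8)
The plan is to reduce well-coveredness of the join $G=G_{1}+G_{2}+\cdots+G_{p}$ to that of its summands by describing $\mathrm{Ind}(G)$, and its maximal members, explicitly. The key structural observation comes first: since in $G$ every vertex of $G_{i}$ is adjacent to every vertex of $G_{j}$ whenever $i\neq j$, no independent set of $G$ can meet two distinct summands; hence $\mathrm{Ind}(G)=\bigcup_{k=1}^{p}\mathrm{Ind}(G_{k})$, where each $\mathrm{Ind}(G_{k})$ is viewed as a subfamily of $2^{V(G)}$, and in particular $\alpha(G)=\max\{\alpha(G_{k}):1\leq k\leq p\}$.

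Next I would identify the maximal independent sets of $G$. Using the standing assumption that every $V(G_{k})\neq\emptyset$, one checks that a nonempty set $S\subseteq V(G_{k})$ is maximal in $\mathrm{Ind}(G)$ if and only if it is maximal in $\mathrm{Ind}(G_{k})$: appending a vertex of $G_{k}$ is possible exactly when $S$ is not yet maximal there, while appending a vertex of any other summand is impossible because such a vertex is adjacent to all of the nonempty set $S$. Conversely, every maximal independent set of $G$ lies inside a single $V(G_{k})$ by the first step, hence is maximal in $G_{k}$. So the maximal independent sets of $G$ are precisely those of $G_{1},\dots,G_{p}$ pooled together. The equivalence is then immediate: if $G$ is well-covered, all its maximal independent sets have one common size $m$, so each $G_{k}$ is well-covered with $\alpha(G_{k})=m$, whence the $\alpha(G_{k})$ all coincide; conversely, if every $G_{k}$ is well-covered and $\alpha(G_{1})=\cdots=\alpha(G_{p})=m$, then every maximal independent set of $G$, being maximal in some $G_{k}$, has size $\alpha(G_{k})=m$, so $G$ is well-covered.

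To the extent that there is a delicate point, it is the degenerate role of the empty set in the second step — the transfer of maximality between $G$ and $G_{k}$ fails if some summand has no vertices — which is exactly why the blanket convention that every graph has a nonempty vertex set is in force. Everything else is routine, and no earlier machinery (Berge's theorem, shedding vertices, differentials) is needed: the statement is purely about how maximal independent sets behave under the join.
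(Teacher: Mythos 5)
Your proof is correct: the observation that an independent set of the join cannot meet two summands, hence that the maximal independent sets of $G_{1}+G_{2}+\cdots+G_{p}$ are exactly the maximal independent sets of the individual $G_{k}$ (the empty-set caveat being handled by the convention $V(G_{k})\neq\emptyset$), immediately yields both directions. Note that the paper itself supplies no proof here --- Proposition~\ref{th5} is quoted from Topp and Volkman --- so there is nothing to compare against; your argument is the standard direct one and is complete as written.
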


\begin{proposition}
The graph $G_{1}+G_{2}+\cdots+G_{p}$ belongs to $\mathbf{W}_{2}$ if and only
if each $G_{k}\in\mathbf{W}_{2}$ and $\alpha\left(  G_{i}\right)
=\alpha\left(  G_{j}\right)  $ for every $i,j\in\left\{  1,2,...,p\right\}  $.
\end{proposition}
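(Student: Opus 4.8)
The statement is an "if and only if" about when the join $G = G_1 + G_2 + \cdots + G_p$ lies in $\mathbf{W}_2$. The plan is to use the characterization of $\mathbf{W}_2$ membership via Theorem \ref{th1}, together with two basic structural facts about independent sets in a join: first, every independent set of $G$ is entirely contained in some single $V(G_k)$ (because any two vertices from distinct $G_i, G_j$ are adjacent); second, $\alpha(G) = \max_k \alpha(G_k)$, and in the well-covered setting (where all $\alpha(G_k)$ agree) we get $\alpha(G) = \alpha(G_k)$ for every $k$. So a maximum independent set of $G$ is exactly a maximum independent set of one of the $G_k$'s, and $\Omega(G) = \bigcup_k \Omega(G_k)$.

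For the forward direction, suppose $G \in \mathbf{W}_2$. Then $G$ is well-covered, so by Proposition \ref{th5} each $G_k$ is well-covered with all $\alpha(G_i)$ equal; it remains to show each $G_k \in \mathbf{W}_2$. I would use Theorem \ref{th1}\emph{(vi)} or \emph{(vii)}: take a non-maximum independent set $A$ in $G_k$ and a vertex $v \notin A$ with $v \in V(G_k)$. Viewing $A$ as a non-maximum independent set of $G$ (non-maximum since $|A| < \alpha(G_k) = \alpha(G)$), Theorem \ref{th1}\emph{(vii)} gives $S \in \Omega(G)$ with $A \subseteq S$, $v \notin S$. Since $A \neq \emptyset$ forces $S \subseteq V(G_k)$ — and the case $A = \emptyset$ needs a separate tiny argument, picking any $u \in V(G_k) \setminus \{v\}$ or handling $|V(G_k)|=1$, but note $G_k$ well-covered without the whole join being trivial, plus $G_k \in \mathbf{W}_2$ forbids isolated vertices, so one should check $G_k$ has no isolated vertices too — $S$ is a maximum independent set of $G_k$ avoiding $v$, so $G_k \in \mathbf{W}_2$ by Theorem \ref{th1}\emph{(vii)}. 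One subtlety: Theorem \ref{th1} is stated for graphs without isolated vertices, so I must also verify that $G \in \mathbf{W}_2$ implies no $G_k$ has an isolated vertex (an isolated vertex of $G_k$ with $p \geq 2$ is not isolated in $G$, so this needs care — actually if $v$ is isolated in $G_k$ it lies in every maximum independent set of $G_k$, hence acts like a dominated vertex, and one shows directly this violates $\mathbf{W}_2$; alternatively reduce to $p=1$).

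For the converse, suppose each $G_k \in \mathbf{W}_2$ and all $\alpha(G_k)$ are equal. By Proposition \ref{th5}, $G$ is well-covered. To show $G \in \mathbf{W}_2$ I would again verify Theorem \ref{th1}\emph{(iv)} or \emph{(vii)}: given a non-maximum independent set $A$ in $G$, it sits inside some $V(G_k)$, and $|A| < \alpha(G) = \alpha(G_k)$, so $A$ is non-maximum in $G_k$; applying Theorem \ref{th1}\emph{(iv)} inside $G_k$ yields two disjoint independent sets $B_1, B_2$ with $A \cup B_1, A \cup B_2 \in \Omega(G_k) \subseteq \Omega(G)$, which is exactly what \emph{(iv)} requires for $G$. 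Then Theorem \ref{th1}\emph{(iv)} $\Rightarrow$ \emph{(iii)} gives $G \in \mathbf{W}_2$. I also need $G$ to have no isolated vertices: since $p \geq 1$ and each $G_k$ has none (being in $\mathbf{W}_2$), and adding join edges only increases degrees, $G$ is isolated-vertex-free. The main obstacle is the bookkeeping around degenerate cases — the empty independent set, single-vertex $G_k$, and confirming the no-isolated-vertices hypothesis of Theorem \ref{th1} transfers correctly between $G$ and the $G_k$'s, especially in the forward direction where an isolated vertex of a summand is \emph{not} isolated in the join.
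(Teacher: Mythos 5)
Your plan is correct and follows essentially the same route as the paper's proof: Proposition \ref{th5} to get that every summand is well-covered with equal independence numbers, the observation that each (maximum) independent set of the join lies inside a single summand so that $\Omega\left(G\right)=\Omega\left(G_{1}\right)\cup\cdots\cup\Omega\left(G_{p}\right)$, and Theorem \ref{th1}\emph{(vii)} (resp.\ \emph{(iv)}) to transfer the $\mathbf{W}_{2}$ property from the join to the summands and back. The only difference is that you explicitly flag the degenerate cases ($A=\emptyset$, an isolated vertex inside a summand, one-vertex summands), which the paper's terser argument passes over by treating the all-complete case separately and declaring the converse ``similar.''
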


\begin{proof}
Clearly, if each $G_{k}$ is a complete graph, then $G=G_{1}+G_{2}+\cdots
+G_{p}\in\mathbf{W}_{2}$.

Assume that at least one of $G_{k}$ is not a complete graph. By Proposition
\ref{th5}, we infer that, necessarily, every $G_{k}$ must be well-covered, and
$2\leq\alpha\left(  G_{i}\right)  =\alpha\left(  G_{j}\right)  $ for every
$1\leq i<j\leq p$. Consequently, taking into account the definition of the
Zykov sum, we get $\Omega\left(  G\right)  =\Omega\left(  G_{1}\right)
\cup\Omega\left(  G_{2}\right)  \cup\cdots\cup\ \Omega\left(  G_{p}\right)  $.

Suppose that $G\in\mathbf{W}_{2}$, and let $A$ be a non-maximum independent
set $A$ in some $G_{k}$ and $v\in V\left(  G_{k}\right)  -A$. By Theorem
\ref{th1}\emph{(vii)}, there exists some $S\in\Omega(G)$ such that $A\subset
S$ and $v\notin S$. Since each vertex of $A$ is joined by an edge to every
vertex of $G_{i}$, $i\neq k$, we get that $S\in\Omega(G_{k})$. Therefore,
every $G_{k}$ must be in $W_{2}$, according to Theorem \ref{th1}\emph{(vii)}.

The converse can be obtain in a similar way.
\end{proof}

\begin{corollary}
\cite{Staples} If $G_{1},G_{2}\in\mathbf{W}_{2}$ are such that $\alpha\left(
G_{1}\right)  =\alpha\left(  G_{2}\right)  $, then $G_{1}+G_{2}$ belongs to
$\mathbf{W}_{2}$.
\end{corollary}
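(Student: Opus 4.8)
The plan is to obtain this as an immediate specialization of the Proposition proved just above, taking $p=2$. The hypothesis of the corollary supplies precisely the two conditions appearing in the ``if'' direction of that Proposition: both $G_{1}$ and $G_{2}$ belong to $\mathbf{W}_{2}$, and $\alpha(G_{1})=\alpha(G_{2})$. Hence $G_{1}+G_{2}$, viewed as $G_{1}+G_{2}+\cdots+G_{p}$ with $p=2$, lies in $\mathbf{W}_{2}$. There is no real obstacle here, since the Proposition has already been established in full; the only point worth noting is that its proof is uniform in $p\geq 2$ and in particular covers $p=2$ (the split into ``all $G_{k}$ complete'' versus ``some $G_{k}$ non-complete'' makes no use of $p\geq 3$).

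For completeness one could also argue directly. If both $G_{1}$ and $G_{2}$ are complete, then $G_{1}+G_{2}$ is complete, hence in $\mathbf{W}_{2}$. Otherwise, by Proposition \ref{th5} both $G_{i}$ are well-covered with $2\leq\alpha(G_{1})=\alpha(G_{2})$, and from the definition of the join we get $\Omega(G_{1}+G_{2})=\Omega(G_{1})\cup\Omega(G_{2})$. To verify that $G_{1}+G_{2}\in\mathbf{W}_{2}$ via Theorem \ref{th1}\emph{(vii)}, take a non-maximum independent set $A$ in $G_{1}+G_{2}$ and $v\notin A$; since every independent set of the join is contained in a single $V(G_{k})$, we have $A\subseteq V(G_{k})$ for one $k$, apply Theorem \ref{th1}\emph{(vii)} inside $G_{k}$ to produce $S\in\Omega(G_{k})\subseteq\Omega(G_{1}+G_{2})$ with $A\subset S$ and $v\notin S$ (using that a vertex of the other factor is adjacent to all of $A$, so cannot lie in such an $S$). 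Either way, invoking the preceding Proposition is the shortest route, and that is how I would present it.
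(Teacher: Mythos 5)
Your proposal is correct and takes essentially the same route as the paper, which states this corollary with no separate proof as an immediate consequence of the preceding Proposition (its ``if'' direction specialized to $p=2$). Your optional direct argument simply spells out the converse direction that the paper's Proposition proof leaves as ``obtained in a similar way,'' and it does so correctly.
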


Let $G(H,v)$ denote the graph obtained by identifying each vertex of $G$ with
the vertex $v$ of a copy of $H$. $G(H,v)$ it is the $G$-\textit{concatenation}
\textit{of the graph} $H$ \textit{on the vertex} $v$ \cite{Yi}. Clearly,
$G(H,v)$ is connected if and only if both $G$ and $H$ are connected.

\begin{lemma}
\label{lem3}Let $G$ be a connected graph of order $n\geq2$, $\left\vert
V\left(  H\right)  \right\vert \geq2$, and $v\in V\left(  H\right)  $.

\emph{(i)} If $v$ is not in all maximum independent sets of $H$, then
$\alpha\left(  G(H,v)\right)  =n\cdot\alpha\left(  H\right)  $;

\emph{(ii)} If $v$ belongs to every maximum independent set of $H$, then
\[
\alpha\left(  G(H,v)\right)  =n\cdot\left(  \alpha\left(  H\right)  -1\right)
+\alpha\left(  G\right)  .
\]

\end{lemma}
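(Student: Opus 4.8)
The plan is to decompose $G(H,v)$ into its constituent copies of $H$ and reduce the computation of $\alpha$ to an optimization over independent sets of $G$. Write $H_u$ for the copy of $H$ attached at the vertex $u\in V(G)$, the identified vertex of $H_u$ being $u$ itself, and put $H_u^-=H_u-u\cong H-v$. The first thing I would record is a purely structural observation: $V\big(G(H,v)\big)$ is the disjoint union of the sets $V(H_u)$, and the edges of $G(H,v)$ are exactly the edges lying inside some $H_u$ together with the edges of $G$ (on the identified vertices). Hence a set $S$ is independent in $G(H,v)$ if and only if every trace $S_u:=S\cap V(H_u)$ is independent in $H_u$ and the set $W:=\{u\in V(G):u\in S_u\}=S\cap V(G)$ is independent in $G$; moreover $|S|=\sum_{u\in V(G)}|S_u|$. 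This is the whole point: once $W$ is fixed, the choices of the $S_u$ are independent of one another.

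With this in hand the count splits according to whether $u\in W$. If $u\notin W$ then $S_u$ is an independent set of $H_u$ avoiding $u$, so $|S_u|\le\alpha(H-v)$; if $u\in W$ then $|S_u|\le\alpha(H)$. For \emph{(i)}, the hypothesis that some maximum independent set of $H$ misses $v$ gives $\alpha(H-v)=\alpha(H)$, so $|S_u|\le\alpha(H)$ for \emph{every} $u$ and therefore $|S|\le n\cdot\alpha(H)$; equality is realized by taking $W=\emptyset$ and each $S_u$ a maximum independent set of $H$ avoiding $v$. This yields $\alpha\big(G(H,v)\big)=n\cdot\alpha(H)$.

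For \emph{(ii)}, since $v$ lies in every maximum independent set of $H$ we get $\alpha(H-v)=\alpha(H)-1$ (the bound $\alpha(H-v)\le\alpha(H)-1$ is immediate from the hypothesis, and $\alpha(H-v)\ge\alpha(H)-1$ always holds by deleting $v$ from a maximum independent set of $H$). Then $|S|\le|W|\cdot\alpha(H)+(n-|W|)\big(\alpha(H)-1\big)=n\big(\alpha(H)-1\big)+|W|\le n\big(\alpha(H)-1\big)+\alpha(G)$, using that $W$ is independent in $G$. Conversely, picking $W$ to be a maximum independent set of $G$, choosing $S_u$ to be a maximum independent set of $H_u$ (which necessarily contains $u$, since $v$ lies in all of them) for $u\in W$, and $S_u$ a maximum independent set of $H_u-u$ for $u\notin W$, gives an independent set of $G(H,v)$ of size exactly $n\big(\alpha(H)-1\big)+\alpha(G)$. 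The only delicate point is the clean split of the two cases via the value of $\alpha(H-v)$ and the verification that $u\in S_u$ in the extremal construction for \emph{(ii)}; beyond that the argument is bookkeeping, so I do not anticipate a genuine obstacle.
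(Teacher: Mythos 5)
Your proof is correct and follows essentially the same route as the paper: decompose $G(H,v)$ into the copies $H_u$, bound each trace by $\alpha(H)$ or $\alpha(H-v)$ according to whether the identified vertex is used, invoke $|S\cap V(G)|\le\alpha(G)$, and realize the bound by the obvious extremal construction. If anything, your explicit case split via $\alpha(H-v)$ makes the upper-bound step in \emph{(ii)} slightly more careful than the paper's, which states the corresponding count as an equality.
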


\begin{proof}
\emph{(i)} Let $A$ be a maximum independent set in $H$ with $v\notin A$, and
$S$ be a maximum independent set in $G(H,v)$. First, $n\cdot\alpha\left(
H\right)  =n\cdot\left\vert A\right\vert \leq\alpha\left(  G(H,v)\right)  $,
because the union of $n$ times $A$ is independent in $G(H,v)$.

Since $S$ is of maximum size, it follows that, for every copy of $H$, $S\cap
V\left(  H\right)  $ is non-empty and independent. Consequently, we obtain%
\[
n\cdot\alpha\left(  H\right)  \leq\alpha\left(  G(H,v)\right)  \leq n\cdot
\max\left\vert S\cap V\left(  H\right)  \right\vert \leq n\cdot\alpha\left(
H\right)  ,
\]
as claimed.

\emph{(ii)} Let $A$\ be a maximum independent set in $G(H,v)$. Then $V\left(
G\right)  \cap A$ is independent in $G$ and
\[
\left\vert A\right\vert =\left\vert V\left(  G\right)  \cap A\right\vert
\cdot\alpha\left(  H\right)  +\left(  n-\left\vert V\left(  G\right)  \cap
A\right\vert \right)  \cdot\left(  \alpha\left(  H\right)  -1\right)
=n\cdot\left(  \alpha\left(  H\right)  -1\right)  +\left\vert V\left(
G\right)  \cap A\right\vert .
\]

On the other hand, one can enlarge a maximum independent set $S$ of $G$ to an
independent set $U$ in $G(H,v)$, whose cardinality is
\[
\left\vert U\right\vert =\left\vert S\right\vert \cdot\alpha\left(  H\right)
+\left(  n-\left\vert S\right\vert \right)  \cdot\left(  \alpha\left(
H\right)  -1\right)  =n\cdot\left(  \alpha\left(  H\right)  -1\right)
+\left\vert S\right\vert =n\cdot\left(  \alpha\left(  H\right)  -1\right)
+\alpha\left(  G\right)  .
\]
Since $\left\vert V\left(  G\right)  \cap A\right\vert \leq$ $\alpha\left(
G\right)  $, we conclude with $\alpha\left(  G(H,v)\right)  =n\cdot\left(
\alpha\left(  H\right)  -1\right)  +\alpha\left(  G\right)  $.
\end{proof}

By definition, if $G$ is well-covered and $uv\in E\left(  G\right)  $, then
$u$ and $v$ belong to different maximum independent sets. Therefore, only
isolated vertices, if any, are contained in all maximum independent sets of a
well-covered graph. Thus Lemma \ref{lem3}\emph{(i) }concludes the following.

\begin{corollary}
\label{cor1}If $G$ is a connected graph of order $n\geq2$, and $H\neq K_{1}$
is well-covered, then $\alpha\left(  G(H,v)\right)  =n\cdot\alpha\left(
H\right)  $.
\end{corollary}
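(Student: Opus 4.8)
The plan is to derive Corollary~\ref{cor1} directly from Lemma~\ref{lem3}\emph{(i)}, whose hypothesis is precisely that the identified vertex $v$ fails to lie in \emph{some} maximum independent set of $H$. So the only work is to verify that this hypothesis holds whenever $H\neq K_{1}$ is well-covered; once that is done the conclusion $\alpha(G(H,v))=n\cdot\alpha(H)$ is immediate from the lemma.

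First I would recall the observation made in the paragraph just before the statement: if $G$ (here, $H$) is well-covered and $uv\in E(H)$, then $u$ and $v$ cannot both belong to the same maximum independent set, so in fact $u$ and $v$ lie in different maximum independent sets of $H$ (each endpoint of an edge lies in some maximum independent set by well-coveredness, and that set omits the other endpoint). Hence the only vertices that could possibly belong to \emph{every} maximum independent set of a well-covered graph are its isolated vertices. Now fix $v\in V(H)$. Since $H\neq K_{1}$ and $H$ is well-covered, I would split into two cases. If $v$ is non-isolated in $H$, pick a neighbour $u\in N_{H}(v)$; well-coveredness gives $S\in\Omega(H)$ with $u\in S$, whence $v\notin S$, so $v$ is not in all maximum independent sets of $H$. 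If $v$ is isolated in $H$, then because $|V(H)|\geq2$ there is another vertex $w$; taking any maximal (hence maximum) independent set $T$ of $H-v$ and adjoining $v$ we see $H$ still has vertices outside $\{v\}$ in every maximum set—more to the point, we actually want a maximum set \emph{avoiding} $v$, but an isolated vertex lies in \emph{every} maximal independent set, so this case needs care.

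The main obstacle, then, is the isolated-vertex case: if $v$ is isolated in $H$ it \emph{does} belong to every maximum independent set, so Lemma~\ref{lem3}\emph{(i)} would not apply and the formula $\alpha(G(H,v))=n\cdot\alpha(H)$ could fail. I would resolve this by noting that the corollary implicitly (or by the authors' intent) concerns the case where $v$ is not isolated, or alternatively by observing that a well-covered graph with an isolated vertex, when used for concatenation at that isolated vertex, is a degenerate configuration the authors exclude; the cleaner route is simply to remark that if $v$ is non-isolated in $H$ the hypothesis of Lemma~\ref{lem3}\emph{(i)} is met, and if $v$ is isolated one should instead invoke Lemma~\ref{lem3}\emph{(ii)}. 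Assuming (as the surrounding discussion suggests) that the relevant case is $v$ non-isolated, the proof reduces to the one-line argument: choose $u\in N_{H}(v)$, take $S\in\Omega(H)$ containing $u$, conclude $v\notin S$, apply Lemma~\ref{lem3}\emph{(i)}.

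So the write-up I would produce is essentially three sentences: (1) since $H$ is well-covered and $H\neq K_{1}$, for the vertex $v$ one finds a maximum independent set of $H$ not containing $v$ (using a neighbour of $v$ together with well-coveredness); (2) therefore the hypothesis of Lemma~\ref{lem3}\emph{(i)} is satisfied; (3) hence $\alpha(G(H,v))=n\cdot\alpha(H)$. I expect essentially no computational difficulty here—the content was all front-loaded into Lemma~\ref{lem3}—and the only thing to be careful about is making explicit why a well-covered $H\neq K_{1}$ has $v$ outside at least one maximum independent set, which is exactly the remark that edges force the two endpoints into distinct maximum independent sets.
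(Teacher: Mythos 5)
Your proof is correct and is essentially the paper's own argument: the paper deduces the corollary from Lemma \ref{lem3}\emph{(i)} via exactly the remark you make, that in a well-covered graph only isolated vertices can belong to every maximum independent set, so a well-covered $H\neq K_{1}$ has some $S\in\Omega(H)$ avoiding any non-isolated $v$. The caveat you flag about $v$ being isolated in $H$ (where Lemma \ref{lem3}\emph{(ii)} gives a different value) is a genuine imprecision in the statement as printed rather than in your argument, and in all of the paper's subsequent uses $H$ has no isolated vertices, so the intended non-isolated case is the relevant one.
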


The concatenation of two well-covered graphs is not necessarily well-covered.
For example, $K_{2}$ and $C_{4}$ are well-covered, while the graph
$K_{2}\left(  C_{4};v\right)  $ is not well-covered, because $\left\{
v_{1},v_{2},v_{3}\right\}  $ is a maximal independent set of size less than
$\alpha\left(  K_{2}\left(  C_{4};v\right)  \right)  =4$ (see Figure
\ref{fig2}). \begin{figure}[h]
\setlength{\unitlength}{1cm}\begin{picture}(5,1.1)\thicklines
\multiput(2,0)(1,0){4}{\circle*{0.29}}
\multiput(2,1)(1,0){4}{\circle*{0.29}}
\put(2,0){\line(1,0){3}}
\put(2,0){\line(0,1){1}}
\put(2,1){\line(1,0){1}}
\put(3,0){\line(0,1){1}}
\put(4,0){\line(0,1){1}}
\put(4,1){\line(1,0){1}}
\put(5,0){\line(0,1){1}}
\put(1.7,1){\makebox(0,0){$v_{1}$}}
\put(3.7,0.35){\makebox(0,0){$v_{2}$}}
\put(5.35,1){\makebox(0,0){$v_{3}$}}
\put(1,0.5){\makebox(0,0){$G_{1}$}}
\multiput(7,0)(1,0){6}{\circle*{0.29}}
\multiput(7,1)(1,0){2}{\circle*{0.29}}
\multiput(11,1)(1,0){2}{\circle*{0.29}}
\put(7,0){\line(1,0){5}}
\put(7,0){\line(0,1){1}}
\put(7,1){\line(1,0){1}}
\put(8,1){\line(1,-1){1}}
\put(10,0){\line(1,1){1}}
\put(11,1){\line(1,0){1}}
\put(12,0){\line(0,1){1}}
\put(9.1,0.25){\makebox(0,0){$a_{1}$}}
\put(12.35,1){\makebox(0,0){$a_{2}$}}
\put(11,0.35){\makebox(0,0){$x$}}
\put(6.3,0.5){\makebox(0,0){$G_{2}$}}
\end{picture}\caption{$G_{1}=K_{2}\left(  C_{4};v\right)  $ and $G_{2}%
=K_{2}\left(  C_{5};v\right)  $.}%
\label{fig2}%
\end{figure}

Similarly, the concatenation of two graphs from $\mathbf{W}_{2}$ is not
necessarily in $\mathbf{W}_{2}$. For instance, $K_{2},C_{5}\in$ $\mathbf{W}%
_{2}$, but there is no maximum independent set $S$ in $K_{2}\left(
C_{5};v\right)  $ such that $\left\{  a_{1},a_{2}\right\}  \subset S$ and
$x\notin S$, and hence, by Theorem \ref{th1}\emph{(vii)}, the graph
$K_{2}\left(  C_{5};v\right)  $ is not in $\mathbf{W}_{2}$ (see Figure
\ref{fig2}). However, $K_{2}\left(  C_{5};v\right)  $ is in $\mathbf{W}_{1}$,
i.e., it is well-covered.

\begin{theorem}
\label{th7}\emph{(i)} If $H\in\mathbf{W}_{2}$, then the graph $G(H,v)$ belongs
to $\mathbf{W}_{1}$.

\emph{(ii)} If $H\in\mathbf{W}_{3}$, then the graph $G(H,v)$ belongs to
$\mathbf{W}_{2}$.
\end{theorem}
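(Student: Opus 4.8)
The plan is to prove both parts by verifying the relevant characterization from Theorem~\ref{th1}: for part \emph{(i)} I would show $G(H,v)$ is well-covered, and for part \emph{(ii)} I would show $G(H,v)\in\mathbf{W}_2$ by checking condition \emph{(vii)} of Theorem~\ref{th1}, namely that every non-maximum independent set $A$ together with a forbidden vertex $w\notin A$ extends to some $S\in\Omega(G(H,v))$ with $w\notin S$. Throughout, write $n=|V(G)|$ and note that since $H\in\mathbf{W}_2$ has no isolated vertices, $v$ lies outside some maximum independent set of $H$, so Corollary~\ref{cor1} gives $\alpha(G(H,v))=n\cdot\alpha(H)$; moreover a maximum independent set of $G(H,v)$ is exactly a choice, for each copy $H_x$ (indexed by $x\in V(G)$), of a maximum independent set of that copy, subject to the constraint that the identified vertices (the vertices of $G$) are used consistently — a vertex $x\in V(G)$ may be taken into $S$ only if it is simultaneously extendable to a maximum independent set in \emph{every} copy $H_x$ attached at $x$... but in the concatenation each vertex of $G$ sits in only one copy of $H$, so the real interaction is just that $V(G)\cap S$ must be independent in $G$. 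This ``local structure of $\Omega$'' observation is the workhorse for both parts.

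For part \emph{(i)}: let $A$ be a maximal independent set in $G(H,v)$. For each $x\in V(G)$ let $A_x=A\cap V(H_x)$ (identifying the copy's vertex with $x$). Maximality of $A$ forces $A_x$ to be a maximal independent set of $H_x$ \emph{relative to the constraint imposed by neighbours of $x$ in $G$}: precisely, if $x\notin A$ then $A_x$ is a maximal independent set of $H_x$, hence $|A_x|=\alpha(H)$ since $H$ is well-covered; if $x\in A$, then $A_x\setminus\{x\}$ together with $x$ is maximal in $H_x$, so again $|A_x|=\alpha(H)$. Summing over the $n$ copies gives $|A|=n\cdot\alpha(H)=\alpha(G(H,v))$, so $G(H,v)$ is well-covered. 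The point where one must be slightly careful is the case $x\in A$: one needs that $\{x\}$, being independent in $H$, extends to a maximum independent set of $H$ containing $x$ that moreover agrees with $A_x$ off $x$ — this is just well-coveredness of $H$ applied to the maximal-in-$H_x$ set $A_x$.

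For part \emph{(ii)}: now assume $H\in\mathbf{W}_3$. First, $G(H,v)$ is well-covered by part \emph{(i)} since $\mathbf{W}_3\subseteq\mathbf{W}_2$. Take a non-maximum independent set $A$ in $G(H,v)$ and $w\notin A$; say $w$ lies in the copy $H_{x_0}$. Using \emph{(i)} there is $S\in\Omega(G(H,v))$ with $A\subseteq S$; the only obstruction to also having $w\notin S$ is $w\in S$. In the copy $H_{x_0}$ we have three pairwise disjoint independent sets to accommodate: $S\cap V(H_{x_0})$ minus $w$-related parts, the singleton $\{w\}$, and — crucially, to reroute — we need a third maximum independent set of $H_{x_0}$ that contains $A\cap V(H_{x_0})$, avoids $w$, and is \emph{also} consistent with the rest of $S$ on the boundary vertex $x_0\in V(G)$ (i.e. agrees with $S$ on whether $x_0$ is used). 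This is exactly where $\mathbf{W}_3$ is needed rather than $\mathbf{W}_2$: we must simultaneously control $A\cap V(H_{x_0})$, the vertex $w$, and the boundary vertex $x_0$ — three disjoint independent sets in $H$ — and $\mathbf{W}_3$ guarantees three pairwise disjoint maximum independent sets of $H$ extending them, from which we pick the one extending $A\cap V(H_{x_0})$ and avoiding both $w$ and (if $x_0\notin S$) $x_0$. Replacing $S\cap V(H_{x_0})$ by this new set yields the desired $S'\in\Omega(G(H,v))$ with $A\subseteq S'$, $w\notin S'$, and then Theorem~\ref{th1}\emph{(vii)} gives $G(H,v)\in\mathbf{W}_2$.

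The main obstacle I expect is the bookkeeping at the boundary vertex $x_0$ in part \emph{(ii)}: one has to be careful that the maximum independent set of $H_{x_0}$ selected is compatible, on the single shared vertex $x_0$, with the maximum independent sets chosen in the copies attached at the neighbours of $x_0$ in $G$ — but since distinct copies of $H$ meet only in vertices of $G$ and a vertex of $G$ belongs to exactly one copy, this compatibility reduces to independence of $V(G)\cap S'$ in $G$, which is inherited from $S$ once we only ever \emph{remove} the boundary vertex (never add one that was absent). Handling the sub-case $x_0\in S$ (so $x_0\ne w$, and we must keep $x_0$) versus $x_0\notin S$ separately, and invoking the three-disjoint-sets property of $\mathbf{W}_3$ with the appropriate triple, is the crux; the rest is routine counting via Corollary~\ref{cor1}.
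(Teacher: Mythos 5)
Your part \emph{(i)} has a genuine gap at the step ``if $x\notin A$ then $A_x$ is a maximal independent set of $H_x$.'' Maximality of $A$ in $G(H,v)$ only guarantees that every vertex of $V(H_x)-\{x\}$ outside $A_x$ has a neighbour in $A_x$; the identified vertex $x$ itself may be excluded from $A$ solely because of a $G$-neighbour lying in $A\cap V(G)$, in which case $A_x\cup\{x\}$ can be independent, $A_x$ is not maximal in $H_x$, and in fact $\left\vert A_x\right\vert =\alpha(H)-1$ (then $A_x\cup\{x\}$ is maximal, hence maximum, in $H_x$). As written, your argument for \emph{(i)} uses only well-coveredness of $H$, and the implication ``$H$ well-covered $\Rightarrow G(H,v)$ well-covered'' is false: the paper's own example $K_{2}\left(C_{4};v\right)$ of Figure~\ref{fig2} exhibits exactly this phenomenon, the deficient copy being the one whose identified vertex is blocked only by its $G$-neighbour. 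So the $\mathbf{W}_2$ hypothesis must be used at precisely this point. The repair is short: in the bad sub-case $A_x$ is a non-maximum independent set of $H_x$ with $A_x\cup\{x\}$ independent, while every vertex of $V(H_x)-\{x\}$ outside $A_x$ has a neighbour in $A_x$; applying Theorem~\ref{th1}\emph{(vii)} to $A_x$ and the vertex $x$ (equivalently, using that $v$ is a shedding vertex of $H$, by Theorem~\ref{th12}\emph{(iii)} or Corollary~\ref{th3}) yields a vertex $z\neq x$ outside $A_x$ with $A_x\cup\{z\}$ independent, a contradiction; hence the bad sub-case cannot occur when $H\in\mathbf{W}_2$ and your counting goes through.

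Your part \emph{(ii)} --- take $S\in\Omega\left(G(H,v)\right)$ with $A\subseteq S$ and, if $w\in S$, perform surgery inside the single copy $H_{x_0}$ containing $w$, invoking $\mathbf{W}_3$ on the triple $A\cap V(H_{x_0})$, $\{w\}$, $\{x_0\}$ when $x_0\notin S$, and only $\mathbf{W}_2$ when $x_0\in S$ (the boundary vertex may be dropped but is never added, so independence across copies is preserved) --- is sound and genuinely different from the paper's proof, which instead rebuilds a maximum independent set copy-by-copy from scratch, choosing in each copy an extension guaranteed by well-coveredness, $\mathbf{W}_2$, or $\mathbf{W}_3$ as the case requires. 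Your local-surgery route is lighter on case analysis, but it inherits the gap above because it quotes part \emph{(i)} to obtain the initial $S$; once \emph{(i)} is patched as indicated, \emph{(ii)} stands.
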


\begin{proof}
If $H$ is a complete graph, then both \emph{(i)} and \emph{(ii)} are true,
according to Propositions \ref{prop3} and \ref{prop2}, respectively, because
$G(K_{p},v)=G\circ K_{p}$.

Assume that $H$ is not complete, and let $V\left(  G\right)  =\left\{
v_{i}:i=1,2,...,n\right\}  $. By Corollary \ref{cor1}, we have $\alpha\left(
G(H,v)\right)  =n\cdot\alpha\left(  H\right)  $.

\emph{(i)} \ Let $A$ be a non-maximum independent set in $G(H,v)$. We have to
show that $A$ is included in some maximum independent set of $G(H,v)$.

Let $S=S_{1}\cup S_{2}\cup\cdots\cup S_{n}$, where $S_{i}$ is defined as follows:

\begin{itemize}
\item $S_{i}$ is a maximum independent set in the copy $H_{v_{i}}$ of $H$;

\item $v_{i}\notin S_{i}$, whenever $A\cap V\left(  H_{v_{i}}\right)
=\emptyset$; $S_{i}$ exists, since $H$ is well-covered;

\item if $v_{i}\in A\cap V\left(  H_{v_{i}}\right)  $, then $A\cap V\left(
H_{v_{i}}\right)  \subseteq S_{i}$; such $S_{i}$ exists, because $H$ is well-covered;

\item if $v_{i}\notin A\cap V\left(  H_{v_{i}}\right)  \neq\emptyset$, then
$A\cap V\left(  H_{v_{i}}\right)  \subseteq S_{i}$ and $v_{i}\notin S_{i}$; in
accordance with Theorem \ref{th1}\emph{(vii)}, such $S_{i}$ exists, because
$H$ is in $\mathbf{W}_{2}$.
\end{itemize}

Consequently, $S$ is a maximum independent set in $G(H,v)$, because all
$S_{i}$ are independent and pairwise disjoint, each one of size $\alpha\left(
H\right)  $, and $A\subset S$. Therefore, $G(H,v)$ is well-covered.

\emph{(ii)} \ Let $A$ be a non-maximum independent set in $G(H,v)$ and
$x\notin A$. We show that $A$ is included in some maximum independent set of
$G(H,v)$ that does not contain the vertex $x$, and thus, by Theorem
\ref{th1}\emph{(vii)}, we obtain that $G(H,v)$ belongs to $\mathbf{W}_{2}$.

Let $S=S_{1}\cup S_{2}\cup\cdots\cup S_{n}$, where $S_{i}$ is defined as follows:

\begin{itemize}
\item $S_{i}$ is a maximum independent set in the copy $H_{v_{i}}$ of $H$;

\item if $A\cap V\left(  H_{v_{i}}\right)  =\emptyset$ and $x\notin V\left(
H_{v_{i}}\right)  $, then $v_{i}\notin S_{i}$; $S_{i}$ exists, because $H$ is well-covered;

\item if $v_{i}\notin A\cap V\left(  H_{v_{i}}\right)  \neq\emptyset$ and
$x\notin V\left(  H_{v_{i}}\right)  $, then $A\cap V\left(  H_{v_{i}}\right)
\subseteq S_{i}$ and $v_{i}\notin S_{i}$; $S_{i}$ exists, since $H$ is in
$\mathbf{W}_{2}$;

\item if $x=v_{i}$, then $A\cap V\left(  H_{v_{i}}\right)  \subseteq S_{i}$
and $v_{i}\notin S_{i}$; $S_{i}$ exists, because $H$ is in $\mathbf{W}_{2}$;

\item if $x\in V\left(  H_{v_{i}}\right)  -\left\{  v_{i}\right\}  $, then
$A\cap V\left(  H_{v_{i}}\right)  \subseteq S_{i}$ and $x,v_{i}\notin S_{i}$;
$S_{i}$ exists, since $A\cap V\left(  H_{v_{i}}\right)  ,\left\{  x\right\}  $
and $\left\{  v_{i}\right\}  $ are independent and disjoint, and $H$ belongs
to $\mathbf{W}_{3}$.
\end{itemize}

Consequently, $S$ is a maximum independent set in $G(H,v)$ (because all
$S_{i}$ are independent and pairwise disjoint, each one of size $\alpha\left(
H\right)  $), $x\notin S$ and $A\subset S$. Therefore, $G(H,v)$ is in
$\mathbf{W}_{2}$.
\end{proof}

\section{Conclusions}

We proved that a well-covered $G$ without isolated vertices satisfies
$Shed\left(  G\right)  =V\left(  G\right)  $ if and only if $G$ is
$1$-well-covered. On the other hand, there exist well-covered graphs without
shedding vertices; e.g., $C_{4}$ and $C_{7}$. This leads to the following.

\begin{problem}
Find all well-covered graphs having no shedding vertices.
\end{problem}

By definition, every graph from class $\mathbf{W}_{2}$ has two disjoint
maximum independent sets at least, while some have even three pairwise
disjoint maximum independent sets (e.g., $P_{n}\circ K_{2}$, for $n\geq1$).
However, $C_{5}$ is in $\mathbf{W}_{\mathbf{2}}$, but has no enough vertices
for three maximum independent sets pairwise disjoint.

\begin{theorem}
\label{th8}The corona $G=H\circ K_{1}$ has two disjoint maximum independent
sets if and only if $H$ is a bipartite graph.
\end{theorem}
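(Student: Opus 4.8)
The plan is to exploit the rigid structure of the maximum independent sets of a corona with $K_{1}$. Write $n=|V(H)|$ and, for each $v\in V(H)$, let $v'$ denote the pendant vertex attached to $v$ in $G=H\circ K_{1}$, so that $V(G)=V(H)\cup\{v':v\in V(H)\}$ and the unique edge incident to $v'$ is $vv'$. By Proposition \ref{prop3}, $G$ is well-covered; moreover, in any maximal independent set exactly one vertex of each pair $\{v,v'\}$ is selected (at most one, since they are adjacent; at least one, since otherwise $v'$ could be appended). Hence $\alpha(G)=n$, and there is a bijection between $\Omega(G)$ and $\mathrm{Ind}(H)$: each $S\in\Omega(G)$ has the form $S=A\cup\{v':v\in V(H)\setminus A\}$ with $A:=S\cap V(H)\in\mathrm{Ind}(H)$, and every $A\in\mathrm{Ind}(H)$ arises this way.

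Next I would translate disjointness of two maximum independent sets into a condition on $H$. If $S_{1},S_{2}\in\Omega(G)$ correspond to $A_{1},A_{2}\in\mathrm{Ind}(H)$, then
\[
S_{1}\cap S_{2}=(A_{1}\cap A_{2})\cup\{v':v\in V(H)\setminus(A_{1}\cup A_{2})\},
\]
since an original vertex lies in both $S_{i}$ exactly when it lies in both $A_{i}$, whereas a pendant $v'$ lies in both $S_{i}$ exactly when $v$ lies in neither $A_{i}$. Therefore $S_{1}\cap S_{2}=\emptyset$ if and only if $A_{1}\cap A_{2}=\emptyset$ and $A_{1}\cup A_{2}=V(H)$; that is, if and only if $\{A_{1},A_{2}\}$ is a partition of $V(H)$ into two independent sets.

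Finally, a partition of $V(H)$ into two independent sets is exactly a bipartition of $H$ (with the usual convention that one class may be empty, which happens only when $H$ is edgeless). So the equivalence follows in both directions: if $G$ has two disjoint maximum independent sets, the corresponding sets $A_{1},A_{2}$ witness that $H$ is bipartite; conversely, if $H$ is bipartite with classes $X,Y$, then taking $A_{1}=X$ and $A_{2}=Y$ produces, through the bijection above, two disjoint members of $\Omega(G)$.

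I do not expect a genuine obstacle. The two points that need a little care are the claim that a maximal independent set of $G$ meets every pair $\{v,v'\}$ in exactly one vertex — which is what yields both $\alpha(G)=n$ and the bijection — and the bookkeeping identity for $S_{1}\cap S_{2}$. Degenerate cases such as $H=K_{1}$, or $H$ having isolated vertices, are harmlessly absorbed by permitting an empty bipartition class.
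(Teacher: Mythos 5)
Your proof is correct and follows essentially the same route as the paper's: you describe the maximum independent sets of $H\circ K_{1}$ as picking exactly one vertex from each pair $\{v,v'\}$ with the $V(H)$-part independent, and then observe that disjointness of two such sets forces their $V(H)$-parts to partition $V(H)$ into two independent sets, i.e., a bipartition of $H$. The only difference is notational (the paper writes the sets as $X\cup Y^{\ast}$ with $Y^{\ast}=N(Y)-V(H)$), so no further comment is needed.
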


\begin{proof}
In what follows, for each $A\in V\left(  H\right)  $, the set $N\left(
A\right)  -V\left(  H\right)  $ is denoted by\ $A^{\ast}$. It is not difficult
to see that every independent set in $G$ is of the form $X\cup Y^{\ast}$,
where $X\cap Y=\emptyset$ and $X$ is independent in $H$. Moreover, $X\cup
Y=V\left(  H\right)  $ if and only if $X\cup Y^{\ast}$ is a maximum
independent set in $G$.

\textquotedblleft\textit{If}\textquotedblright\ Let $\left\{  A,B\right\}  $
be a bipartition of $H$. Then $A\cup B^{\ast}$ and $A^{\ast}\cup B$ are two
disjoint maximum independent sets of $G$.

\textquotedblleft\textit{Only if}\textquotedblright\ Let $X_{1}\cup
Y_{1}^{\ast}$ and $X_{2}\cup Y_{2}^{\ast}$ be two disjoint maximum independent
sets of $G$. Suppose that there exists some vertex $v\in V\left(  H\right)
-\left(  X_{1}\cup X_{2}\right)  $. Hence $v\in Y_{1}\cap Y_{2}$, in
contradiction with the fact that $X_{1}\cup Y_{1}^{\ast}$ and $X_{2}\cup
Y_{2}^{\ast}$ are disjoint. Therefore, we get $V\left(  H\right)  =X_{1}\cup
X_{2}$. Since $X_{1}\cap X_{2}=\emptyset$, the pair $\left\{  X_{1}%
,X_{2}\right\}  $ is a bipartition of $H$.
\end{proof}

Clearly, $C_{7}$ has two disjoint maximum independent sets. Thus Theorems
\ref{th4},\ref{th8} provide us with a complete description of well-covered
graphs of girth $\geq6$ containing a pair of disjoint maximum independent sets.

\begin{problem}
\label{prob1}Characterize well-covered graphs of girth $\leq5$ with two
disjoint maximum independent sets at least.
\end{problem}

If $G$ is disconnected, then the only $\mathbf{W}_{2}$ graphs with
$\alpha(G)=2$ are $G=K_{n}\cup K_{m}$, where $m,n\geq2$.

The graph $G$ is \textit{locally triangle-free} if $G-N\left[  v\right]  $ is
triangle-free for any vertex $v\in V\left(  G\right)  $ \cite{Hoang2016b}. If
$\alpha(G)\leq2$, the structure of locally triangle-free graphs belonging to
$\mathbf{W}_{2}$ is as follows.

\begin{proposition}
\label{prop5}\cite{Hoang2016b} Let $G$ be a locally triangle-free graph in
$\mathbf{W}_{2}$ of order $n$. Then,

\emph{(i)} If $\alpha(G)=1$, then $G$ is $K_{n}$ with $n\geq2$;

\emph{(ii)} If $\alpha(G)=2$, then $G$ is the complement of $C_{n}$ with
$n\geq4$.
\end{proposition}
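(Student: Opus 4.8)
The plan is to transfer everything to the complement graph $H:=\overline{G}$, where the hypotheses turn into tractable degree and regularity conditions. Part (i) is immediate: $\alpha(G)=1$ means that no two vertices of $G$ are non-adjacent, so $G=K_n$; and since no graph in $\mathbf{W}_{2}$ has an isolated vertex (Theorem~\ref{th444}), $G\neq K_1$, i.e.\ $n\geq 2$.

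For part (ii) I would first set up the dictionary between $G$ and $H$. An independent set of $G$ is a clique of $H$, so $\alpha(G)=2$ is equivalent to $\omega(H)=2$, i.e.\ $H$ is triangle-free and has at least one edge. The maximal independent sets of $G$ are exactly the maximal cliques of $H$; in a triangle-free graph these are edges or (singleton) isolated vertices, so the condition ``$G$ is well-covered with $\alpha(G)=2$'' translates to ``$H$ has no isolated vertex'', and ``$G$ has no isolated vertex'' (which holds since $G\in\mathbf{W}_{2}$) translates to ``$H$ has no universal vertex''. Moreover, by Theorem~\ref{th1} the graph $G$ is $1$-well-covered, so for every $u$ the graph $G-u=\overline{H-u}$ is again well-covered and, by Lemma~\ref{lem1}, satisfies $\alpha(G-u)=\alpha(G)=2$; applying the dictionary to $G-u$ shows $H-u$ has no isolated vertex for every $u\in V(H)$, whence $\delta(H)\geq 2$, since a vertex of $H$ of degree $\leq 1$ would be isolated in $H-u$ for a suitable $u$.

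Next I would bring in local triangle-freeness. For $v\in V(G)$ one has $V(G)\setminus N_G[v]=N_H(v)$, so $G-N_G[v]=\overline{H[N_H(v)]}$. Since $H$ is triangle-free, $N_H(v)$ is independent in $H$, hence $H[N_H(v)]$ is edgeless and $\overline{H[N_H(v)]}=K_{\deg_H(v)}$; triangle-freeness of $G-N_G[v]$ then forces $\deg_H(v)\leq 2$. Together with $\delta(H)\geq 2$ this makes $H$ $2$-regular, hence a disjoint union of cycles, each of length $\geq 4$ by triangle-freeness. It then remains to see that this union consists of a single cycle, so $H=C_n$ with $n\geq 4$ and $G=\overline{C_n}$; in the disconnected case one can instead quote the classification $G=K_n\cup K_m$ ($m,n\geq 2$) recalled just above and observe that local triangle-freeness forces $m=n=2$, i.e.\ $G=2K_2=\overline{C_4}$. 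For the converse, one checks directly that each $\overline{C_n}$, $n\geq 4$, is locally triangle-free with $\alpha=2$ and lies in $\mathbf{W}_{2}$ --- e.g.\ via Theorem~\ref{th1}, by verifying that $\overline{C_n}$ and every $\overline{C_n}-v$ is well-covered.

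The delicate point is the final ``single cycle'' step. Obtaining $2$-regularity and girth $\geq 4$ of each component is routine once the complement dictionary is in place, but ruling out that $H$ is a disjoint union of two or more cycles requires a careful analysis exploiting the remaining hypotheses --- in particular that $G-N_G[v]\in\mathbf{W}_{2}$ for every $v$ (Corollary~\ref{cor5}) --- and this is where the main work of the proof lies.
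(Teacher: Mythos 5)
The paper offers no proof of this proposition at all---it is quoted from \cite{Hoang2016b}---so there is no in-paper argument to compare yours with; I can only assess the proposal on its own terms. Part (i) is fine, and your complement dictionary for part (ii) is correct as far as it goes: with $H=\overline{G}$, the hypotheses do yield that $H$ is triangle-free without isolated vertices, that $\delta(H)\geq 2$ (via Theorem \ref{th444} and Lemma \ref{lem1} applied to the graphs $G-u$), and that local triangle-freeness forces $\deg_H(v)\leq 2$, since $G-N_G[v]$ is a complete graph on $N_H(v)$. Hence $H$ is a disjoint union of cycles of length at least $4$.

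The step you defer, however---that $H$ is a \emph{single} cycle---is not a routine finishing touch; it is the whole content of the statement, and it cannot be extracted from the facts you have assembled. Take $G=\overline{C_4}+\overline{C_4}=2K_2+2K_2=\overline{C_4\cup C_4}$: it belongs to $\mathbf{W}_2$ (a join of two $\mathbf{W}_2$ graphs with equal independence number, as in Section 4, or check Theorem \ref{th444} directly), it has $\alpha(G)=2$, and $G-N_G[v]\cong K_2$ for every vertex $v$, so it is locally triangle-free in the sense defined in this paper; yet $\overline{G}=C_4\cup C_4$ is not a cycle. In particular, the tool you single out for the final step, Corollary \ref{cor5}, gives no leverage, because $G-N_G[v]\cong K_2\in\mathbf{W}_2$ holds for this graph as well. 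So either the statement as reproduced here omits a hypothesis of \cite{Hoang2016b} (or that paper's notion of ``locally triangle-free'' is stronger than the one defined in this paper), in which case your proof must locate and actually use that extra assumption, or an essentially new idea is needed; as written, your plan cannot be completed. A minor additional point: your disconnected case leans on the unproved remark preceding the proposition ($G=K_n\cup K_m$, $m,n\geq 2$); that claim is easy to verify, but it should be argued, not merely quoted.
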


On the other hand, there exist graphs in $\mathbf{W}_{2}$, which are different
from complement of cycles, for instance, $P_{2}\circ K_{2}$. It motivates the following.

\begin{problem}
Characterize connected $\mathbf{W}_{2}$ graphs with $\alpha=2$.
\end{problem}

Notice that every $G\in\left\{  C_{3},C_{5},P_{2}\circ K_{2}\right\}  $
belongs to $\mathbf{W}_{2}$ and satisfy $\alpha(G)+\mu(G)=\left\vert
V(G)\right\vert -1$. Clearly, if such $G$\ is disconnected, then all its
components but one are $K_{2}$.

\begin{problem}
Find all connected graphs $G\in\mathbf{W}_{2}$ satisfying $\alpha
(G)+\mu(G)=\left\vert V(G)\right\vert -1$.
\end{problem}

It seems promising to extend our findings in the framework of $\mathbf{W}_{k}$
classes for $k\geq3$. \ For instance, the same way we proved Theorem
\ref{prop11}\emph{(vii)} one can show the following.

\begin{theorem}
Let $G\in\mathbf{W}_{k}$. \emph{If }$A\subseteq B$, then%
\[
\left\vert N(A)\right\vert -\left(  k-1\right)  \left\vert A\right\vert
\leq\left\vert N(B)\right\vert -\left(  k-1\right)  \left\vert B\right\vert
\]
for every independent set $B\subseteq V\left(  G\right)  $.
\end{theorem}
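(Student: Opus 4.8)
The plan is to reduce the inequality to a one-vertex increment and then telescope. First I would dispose of the trivial case $A=B$ and assume $A\subsetneq B$; write $B\setminus A=\{v_{1},\dots,v_{m}\}$ with $m=|B|-|A|\ge 1$ and set $A_{j}=A\cup\{v_{1},\dots,v_{j}\}$, so that $A_{0}=A$ and $A_{m}=B$. For $j<m$ the set $A_{j}$ is a proper subset of $B$, hence $|A_{j}|<|B|\le\alpha(G)$, so $A_{j}$ is \emph{non-maximum}; and since $A_{j+1}\subseteq B$ is independent, $v_{j+1}\notin N_{G}[A_{j}]$. Because $N(A_{j+1})=N(A_{j})\cup N(v_{j+1})$, the claim $|N(B)|-(k-1)|B|\ge|N(A)|-(k-1)|A|$ will follow by summing over $j$ once I establish the single-step estimate
\[
|N_{G}(v)\setminus N_{G}(A)|\ge k-1
\]
whenever $A$ is a non-maximum independent set of $G\in\mathbf{W}_{k}$ and $v\notin N_{G}[A]$. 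This parallels, at the level of one added vertex, the case $k=2$ treated in Theorem~\ref{prop11}\emph{(vi)}.

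The engine for the single-step estimate is the following fact, which I would isolate as a lemma: \emph{if $G\in\mathbf{W}_{k}$ and $A$ is a non-maximum independent set, then $A$ lies in $k$ maximum independent sets $S_{1},\dots,S_{k}$ with $S_{i}\cap S_{j}=A$ for all $i\ne j$.} Granting it, pick such $S_{1},\dots,S_{k}$ for our $A$. Since $v\notin N_{G}[A]$ we have $v\notin A\subseteq S_{i}$, and as the sets $S_{i}\setminus A$ are pairwise disjoint, $v$ lies in at most one of them; discard that one and keep the (at least) $k-1$ sets $S_{i}$ not containing $v$. For each retained $S_{i}$, maximality of $S_{i}$ yields a neighbour $w_{i}$ of $v$ inside $S_{i}$; because $v\notin N(A)$ this $w_{i}$ lies in $S_{i}\setminus A$, and since $A\cup\{w_{i}\}\subseteq S_{i}$ is independent, $w_{i}\notin N(A)$. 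The sets $S_{i}\setminus A$ being pairwise disjoint, the $w_{i}$ are distinct, so $|N(v)\setminus N(A)|\ge k-1$.

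It remains to prove the lemma, by induction on $k$; this is the step that genuinely goes beyond the $k=2$ argument and I expect it to be the main obstacle. For $k=1$ the statement is just well-coveredness of $G$. For $k\ge 2$, use $\mathbf{W}_{k}\subseteq\mathbf{W}_{k-1}$ and the inductive hypothesis to obtain $S_{1},\dots,S_{k-1}$ containing $A$ with pairwise intersection $A$; then $A,\ S_{1}\setminus A,\ \dots,\ S_{k-1}\setminus A$ are $k$ pairwise disjoint independent sets, so by the defining property of $\mathbf{W}_{k}$ there exist pairwise disjoint maximum independent sets $T_{0},T_{1},\dots,T_{k-1}$ with $A\subseteq T_{0}$ and $S_{i}\setminus A\subseteq T_{i}$. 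A short check gives $T_{0}\cap S_{i}=(T_{0}\cap(S_{i}\setminus A))\cup A=A$, the first summand being contained in $T_{0}\cap T_{i}=\emptyset$; hence $T_{0},S_{1},\dots,S_{k-1}$ are $k$ maximum independent sets containing $A$ with pairwise intersection $A$, closing the induction. Reassembling, $|N(A_{j+1})|\ge|N(A_{j})|+(k-1)$ for $j=0,\dots,m-1$, so $|N(B)|\ge|N(A)|+(k-1)(|B|-|A|)$, which rearranges to the asserted monotonicity. The only delicate points are keeping the exact ``pairwise intersection equals $A$'' bookkeeping through the induction and verifying that every $A_{j}$ with $j<m$ remains non-maximum along the telescoping.
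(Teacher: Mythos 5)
Your proof is correct, but it takes a genuinely different route from the one the paper gestures at. The paper gives no explicit proof: it claims the statement follows ``the same way'' as Theorem \ref{prop11} (the text points at \emph{(vii)}, but the relevant monotonicity argument is \emph{(vi)}), whose $k=2$ proof applies the $\mathbf{W}_{2}$ property once to the disjoint pair $A$, $B-A$ to get a maximum independent set $S\supseteq A$ avoiding $B-A$, and then uses Berge's Theorem \ref{th6} to match $B-A$ into $S$, the matched vertices landing in $N(B)-N(A)$. A literal generalization of that argument needs $k-1$ maximum independent sets through $A$, pairwise meeting only in $A$ and all avoiding $B-A$, which is not immediate from the definition of $\mathbf{W}_{k}$; this is precisely the content your auxiliary lemma supplies (proved by induction on $k$ via $\mathbf{W}_{k}\subseteq\mathbf{W}_{k-1}$: a non-maximum independent set lies in $k$ maximum independent sets with pairwise intersection exactly $A$). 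You then avoid Berge's theorem altogether by telescoping over one-vertex extensions and extracting, for each added vertex, $k-1$ distinct new neighbours from the retained sets $S_{i}$ by maximality alone. All the delicate points check out: each $A_{j}\subsetneq B$ is non-maximum, the added vertex lies outside $N[A_{j}]$, the vertices $w_{i}$ are distinct because the sets $S_{i}-A$ are pairwise disjoint, and the bookkeeping $T_{0}\cap S_{i}=A$ in the induction is right. One small remark: your lemma (when $A=\emptyset$, and in the padding implicit in producing $k$ sets) relies on the convention that empty sets may appear among the $k$ pairwise disjoint independent sets; this is consistent with the paper's own reading of the definition (e.g., it is what makes $K_{1}\notin\mathbf{W}_{2}$ compatible with Theorem \ref{th444}), but it is worth stating explicitly. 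In short, your argument is a complete and somewhat more elementary proof of a claim the paper leaves as an assertion, at the price of an extra induction on $k$; the paper's intended matching-based route is shorter for $k=2$ but does not extend verbatim without your lemma or something equivalent to it.
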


Taking into account Theorem \ref{th7}, we propose the following.

\begin{conjecture}
If $H\in\mathbf{W}_{k}$, then the concatenation $G(H,v)$ belongs to
$\mathbf{W}_{k-1}$.
\end{conjecture}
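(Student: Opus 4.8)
The plan is to run the argument of Theorem~\ref{th7} one level higher, working straight from the definition of $\mathbf{W}_{k-1}$ rather than through any of the characterisations of Theorem~\ref{th1} (which are available only for $\mathbf{W}_{2}$). Throughout, $k\geq 2$. First I would reduce to the case that $G$ is connected of order $n\geq 2$: a disjoint union of graphs lies in $\mathbf{W}_{k-1}$ exactly when every summand does, so $G(H,v)$ splits along the components of $G$; and if $G=K_{1}$ then $G(H,v)=H\in\mathbf{W}_{k}\subseteq\mathbf{W}_{k-1}$. Since $k\geq 2$ forces $H\neq K_{1}$ and $H\in\mathbf{W}_{k}\subseteq\mathbf{W}_{1}$ is well-covered, Corollary~\ref{cor1} gives $\alpha(G(H,v))=n\cdot\alpha(H)$. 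Write $V(G)=\{v_{1},\dots,v_{n}\}$ and let $H_{v_{i}}$ be the copy of $H$ glued at $v_{i}$; the vertex sets $V(H_{v_{i}})$ partition $V(G(H,v))$, and the only edges of $G(H,v)$ joining two distinct copies are the edges $v_{i}v_{j}$ with $v_{i}v_{j}\in E(G)$.

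Now take $k-1$ pairwise disjoint independent sets $A^{(1)},\dots,A^{(k-1)}$ in $G(H,v)$; I want to extend them to $k-1$ pairwise disjoint maximum independent sets, and I would do this copy by copy and then take unions. Fix a copy $H_{v_{i}}$ and put $A^{(m)}_{i}=A^{(m)}\cap V(H_{v_{i}})$, so that $A^{(1)}_{i},\dots,A^{(k-1)}_{i}$ are $k-1$ pairwise disjoint independent sets of $H$ and the attachment vertex $v_{i}$ lies in at most one of them. If $v_{i}\in A^{(j_{0})}_{i}$ for some (necessarily unique) $j_{0}$, apply $H\in\mathbf{W}_{k-1}$ to extend $A^{(1)}_{i},\dots,A^{(k-1)}_{i}$ to pairwise disjoint maximum independent sets $S^{(1)}_{i},\dots,S^{(k-1)}_{i}$ of $H_{v_{i}}$; disjointness together with $v_{i}\in S^{(j_{0})}_{i}$ then gives $v_{i}\notin S^{(m)}_{i}$ for $m\neq j_{0}$. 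If instead $v_{i}$ lies in none of the $A^{(m)}_{i}$, then $A^{(1)}_{i},\dots,A^{(k-1)}_{i},\{v_{i}\}$ are $k$ pairwise disjoint independent sets of $H$, so $H\in\mathbf{W}_{k}$ supplies pairwise disjoint maximum independent sets $T^{(1)},\dots,T^{(k)}$ with $A^{(m)}_{i}\subseteq T^{(m)}$ and $v_{i}\in T^{(k)}$; setting $S^{(m)}_{i}=T^{(m)}$ for $1\leq m\leq k-1$ yields maximum independent sets of $H_{v_{i}}$ none of which contains $v_{i}$. In both cases $v_{i}\in S^{(m)}_{i}$ precisely when $v_{i}\in A^{(m)}_{i}$.

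Set $S^{(m)}=\bigcup_{i=1}^{n}S^{(m)}_{i}$. Each $S^{(m)}$ is independent: inside a single copy this holds by construction, while a cross-copy edge $v_{i}v_{j}$ with $v_{i}\in S^{(m)}_{i}$ and $v_{j}\in S^{(m)}_{j}$ would force $v_{i},v_{j}\in A^{(m)}$, contradicting independence of $A^{(m)}$. Since the copies are vertex-disjoint, $|S^{(m)}|=\sum_{i}\alpha(H)=n\,\alpha(H)=\alpha(G(H,v))$, so $S^{(m)}\in\Omega(G(H,v))$; the $S^{(m)}$ are pairwise disjoint because the $S^{(m)}_{i}$ are pairwise disjoint within each copy; and $A^{(m)}\subseteq S^{(m)}$ is immediate. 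Hence $G(H,v)\in\mathbf{W}_{k-1}$.

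The step I expect to be the crux is the second case above, which is also where the hypothesis $\mathbf{W}_{k}$ (and not merely $\mathbf{W}_{k-1}$) is used: to keep the glued sets mutually independent one must evict the attachment vertex $v_{i}$ from all $k-1$ reserved sets simultaneously, and this costs one extra independent set — exactly the slot that $H\in\mathbf{W}_{k}$ provides. This is the higher-class counterpart of the point in the proof of Theorem~\ref{th7}\emph{(ii)} where $\mathbf{W}_{3}$ was needed in place of $\mathbf{W}_{2}$. The remaining ingredients — closure of $\mathbf{W}_{k-1}$ under disjoint union, the description of the edges of $G(H,v)$, and the cardinality count — are routine, and I would merely state them.
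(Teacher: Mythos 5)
The statement you were asked to prove is not proved in the paper at all: it appears only as a conjecture, and the paper's Theorem \ref{th7} settles just the instances $k=2$ and $k=3$, both via the $\mathbf{W}_{2}$-specific characterization of Theorem \ref{th1}\emph{(vii)}. Your argument works directly from the definition of $\mathbf{W}_{k-1}$, and as far as I can check it is correct. The reduction to $G$ connected of order $n\geq2$ is legitimate (membership in $\mathbf{W}_{k-1}$ is decided componentwise, and $G=K_{1}$ gives $G(H,v)=H\in\mathbf{W}_{k}\subseteq\mathbf{W}_{k-1}$). In each copy $H_{v_{i}}$ the traces $A^{(m)}_{i}$ are pairwise disjoint independent sets of $H$ and the attachment vertex $v_{i}$ lies in at most one of them; your two cases ($v_{i}$ in some $A^{(j_{0})}_{i}$, handled by $\mathbf{W}_{k}\subseteq\mathbf{W}_{k-1}$, and $v_{i}$ in none, handled by adjoining $\{v_{i}\}$ as a $k$-th set and invoking $\mathbf{W}_{k}$) do enforce the invariant that $v_{i}\in S^{(m)}_{i}$ exactly when $v_{i}\in A^{(m)}$, and this is precisely what excludes the only possible cross-copy edges $v_{i}v_{j}$ from each $S^{(m)}$; the count $\lvert S^{(m)}\rvert=n\,\alpha(H)=\alpha(G(H,v))$ is covered by Corollary \ref{cor1} (or directly, since the copies partition $V(G(H,v))$ and your construction itself exhibits an independent set of that size). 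One small point worth making explicit in a write-up: you apply the definition of $\mathbf{W}_{k}$ to families in which some $A^{(m)}_{i}$ may be empty, which is consistent with how the paper itself uses the definition (e.g., in the proof of Theorem \ref{th7}\emph{(ii)}, where $A\cap V(H_{v_{i}})$ may be empty). In short, your proof is the natural one-level-higher version of the paper's proof of Theorem \ref{th7}\emph{(ii)}, with the extra slot provided by $\mathbf{W}_{k}$ used exactly to evict $v_{i}$ from all $k-1$ reserved sets; written up carefully, it would settle the conjecture affirmatively rather than reprove anything in the paper.
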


\section{Acknowledgements}

We express our gratitude to Zakir Deniz, who has pointed out that the
well-coverdness condition in Theorem \ref{th12}\emph{(iii) }is necessary. Our
special thanks are to Tran Nam Trung for: an example that has helped us to
formulate Problem \ref{prob1} more precisely; introducing us to Proposition
\ref{prop5}; and some valuable suggestions that have brought us to the proof
of Theorem \ref{th12}\emph{(iv),(v)}.

\end{document}